\documentclass[A4paper, oneside]{amsart}
\usepackage[utf8]{inputenc}
\usepackage{inputenc}
\usepackage[backref=page]{hyperref}
\usepackage{amsmath,amssymb,amsthm,mathtools}
\usepackage{xspace}

\hypersetup{
    colorlinks=true,
    linkcolor=red,
    filecolor=magenta,      
    urlcolor=cyan,
    citecolor=blue,
    pdftitle={Overleaf Example},
    pdfpagemode=FullScreen,
    }

\usepackage[usenames,dvipsnames]{xcolor} % allows you to use color names, call this BEFORE you call TikZ
\usepackage{tikz}
\usepackage{pgfplots}
\usepackage{array}
\usetikzlibrary{calc}
\usetikzlibrary{shapes}
\usetikzlibrary{matrix}
\usetikzlibrary{intersections}
\usetikzlibrary{patterns}
\usetikzlibrary{positioning}
\usetikzlibrary{arrows.meta}
\usetikzlibrary{bending}
\usetikzlibrary{shapes.geometric}

\usepackage{graphicx}
\usepackage{caption}
\usepackage{booktabs}
\usepackage{multirow}
\usepackage{array}
\usepackage{adjustbox}

\usepackage{amsthm}
\usepackage[shortlabels]{enumitem}

\newcommand{\Red}{\operatorname{Red}}

\newcommand{\N}{\mathbb{N}}
\newcommand{\Z}{\mathbb{Z}}

\newcommand{\id}{\textit{e}}
\newcommand{\mpair}[1]{\langle\, #1\,\rangle}  % for group generated by S with relations or scalar product
\newcommand{\ord}{\operatorname{ord}}
\newcommand{\Cay}{\operatorname{Cay}}
\newcommand{\oCay}{%
  \operatorname{%
    \overset{\smash{\raisebox{-0.45ex}{$\longrightarrow$}}}{\textnormal{Cay}}%
  }%
}
\newcommand{\cyc}{\operatorname{cyc}}

\newcommand{\Sb}{\mathbb{S}}
\newcommand{\E}{\mathbb{E}}
\newcommand{\Hb}{\mathbb{H}}
\newcommand{\X}{\mathbb{X}}
\newcommand{\T}{\mathbb{T}}
\newcommand{\W}{\mathcal{W}}
\newcommand{\A}{\mathcal{A}}
\newcommand{\V}{\mathcal{V}}
\newcommand{\LL}{\mathcal{L}}
\newcommand{\Irr}{\mathcal{I}}
\newcommand{\U}{\mathcal{U}}
\newcommand{\G}{\mathcal{G}}

\newcommand{\EIS}{\textnormal{\bf EIS}\xspace}
\newcommand{\EMIS}{\textnormal{\bf EMIS}\xspace}
\newcommand{\MIS}{\textnormal{\bf MIS}\xspace}
\newcommand{\IS}{\textnormal{\bf IS}\xspace}
\newcommand{\QCS}{\textnormal{\bf QCS}\xspace}
\newcommand{\CS}{\textnormal{\bf CS}\xspace}
\newcommand{\tworec}{\textnormal{\bf 2-Rec}\xspace}
\newcommand{\Isom}{\text{Isom}}

\newcommand{\RomanNumeralCaps}[1]
{\MakeUppercase{\romannumeral #1}}

\newtheorem{thm}{Theorem}[section]
\newtheorem{cor}[thm]{Corollary}
\newtheorem{lem}[thm]{Lemma}
\newtheorem{prop}[thm]{Proposition}
\newtheorem{question}{Question}

\theoremstyle{definition}
\newtheorem{rem}[thm]{Remark}
\newtheorem{defi}[thm]{Definition}
\newtheorem{ex}[thm]{Example}

\tikzset{
	sommet/.style={circle,draw,fill=blue,inner sep=1.5pt,thick},
	diagramme/.style={baseline=-0.5ex}
}

\newcommand{\edgeAB}[1]{%
	\begin{tikzpicture}[diagramme,scale=.8]
		\node[sommet](a) at (0,0) {};
		\node[sommet](b) at (.8,0) {};
		\draw[thick](a)--node[above,scale=.7]{$#1$}(b);
\end{tikzpicture}}

\newcommand{\chainABC}[2]{%
	\begin{tikzpicture}[diagramme,scale=.8]
		\node[sommet](a) at (0,0) {};
		\node[sommet](b) at (.8,0) {};
		\node[sommet](c) at (1.6,0) {};
		\draw[thick](a)--node[above,scale=.7]{$#1$}(b);
		\draw[thick](b)--node[above,scale=.7]{$#2$}(c);
\end{tikzpicture}}

\newcommand{\triangleABC}[3]{%
	\begin{tikzpicture}[diagramme,scale=.7]
		\useasboundingbox (-1.2,-.8) rectangle (1.2,1.2);
		\node[sommet](a) at (90:.7) {};
		\node[sommet](b) at (210:.7) {};
		\node[sommet](c) at (330:.7) {};
		\draw[thick](a)--node[left,scale=.7]{$#1$}(b);
		\draw[thick](b)--node[below,scale=.7]{$#2$}(c);
		\draw[thick](c)--node[right,scale=.7]{$#3$}(a);
\end{tikzpicture}}

\newcommand{\triangleABCC}[2]{%
	\begin{tikzpicture}[diagramme,scale=.7]
		\useasboundingbox (-1.2,-.8) rectangle (1.2,1.2);
		\node[sommet](a) at (90:.7) {};
		\node[sommet](b) at (210:.7) {};
		\node[sommet](c) at (330:.7) {};
		\draw[thick](a)--node[left,scale=.7]{$#1$}(b);
		\draw[thick](b)--node[below,scale=.7]{$#2$}(c);
      \end{tikzpicture}}

\newcommand{\nodesA}{%
	\begin{tikzpicture}[diagramme,scale=.8]
		\node[sommet] at (0,0) {};
\end{tikzpicture}}

\newcommand{\nodesAB}{%
	\begin{tikzpicture}[diagramme,scale=.8]
		\node[sommet] at (0,0) {};
		\node[sommet] at (.8,0) {};
\end{tikzpicture}}

\newcommand{\nodesABC}{%
	\begin{tikzpicture}[diagramme,scale=.8]
		\node[sommet] at (0,0) {};
		\node[sommet] at (.8,0) {};
		\node[sommet] at (1.6,0) {};
\end{tikzpicture}}

\newcommand{\splitABC}[1]{%
	\begin{tikzpicture}[diagramme,scale=.8]
		\node[sommet](a) at (0,0) {};
		\node[sommet](b) at (.8,0) {};
		\node[sommet](c) at (1.6,0) {};
		\draw[thick](b)--node[above,scale=.7]{$#1$}(c);
\end{tikzpicture}}

\title{Weak order on groups generated by involutions}

\author[F. Dos Santos]{Fabricio Dos Santos}
\address[Fabricio Dos Santos]{Department of Mathematics and Statistics, McGill University, Burnside Hall, 805 Sherbrooke Street West, Montr\'eal, Qu\'ebec, H3A 0B9, Canada}
\email{fabricio.dossantos@mail.mcgill.ca}

\author[C. Hohlweg]{Christophe~Hohlweg}
\address[Christophe Hohlweg]{Universit\'e du Qu\'ebec \`a Montr\'eal\\
LaCIM et D\'epartement de Math\'ematiques\\ CP 8888 Succ. Centre-Ville\\
Montr\'eal, Qu\'ebec, H3C 3P8\\ Canada}
\email{hohlweg.christophe@uqam.ca}
\urladdr{http://hohlweg.math.uqam.ca}

\author[A. Trufanov]{Aleksandr Trufanov}
\address[Aleksandr Trufanov]{D\'epartement de math\'ematiques et de statistiques, Universit\'e de Montr\'eal, Montr\'eal,
Qu\'ebec, H3C 3J7, Canada}
\email{trufaleks2022@gmail.com}

\keywords{Coxeter groups, involution systems, weak order}
\thanks{This work was partially supported by the NSERC  grant {\em combinatorics of infinite Coxeter groups} held by Hohlweg. Aleksandr Trufanov was partially funded by a excellence fellowship (“365478”) from the Fonds de recherche du Qu\'ebec.}
\subjclass[2020]{Primary 20F55; secondary 05A05}

\begin{document}

\begin{abstract} In this article, we propose to initiate the general study of involution systems.  An {\em involution system}, that is, a group $W$ generated by a set of involutions $S$, is naturally endowed with a {\em weak order} arising from orienting the Cayley graph of $(W,S)$. In the case of a Coxeter system $(W,S)$, Bj\"orner showed that the weak order is a complete meet-semilattice. This fact has many important consequences for Coxeter systems and their related structures. In this article, we discuss the following question: For which involution systems is the weak order a complete meet-semilattice?

The class of involution systems that satisfies this condition is larger than the class of Coxeter systems (it contains, for instance, Cactus groups). In the case of an involution system with sign character, we provide a finite presentation by generators and relations and a classification in rank 3. We also obtain new characterizations of Coxeter systems in terms of the weak order, and prove a number of results on certain subclasses of these involution systems. Finally, we discuss further works and open problems in relation to biautomatic structures, geometric representations, mediangle graphs, and more.
\end{abstract}

\maketitle

\setcounter{tocdepth}{1}
\tableofcontents 

\section{Introduction} Let $(W,S)$ be an {\em involution system}, that is, a group $W$ generated by a set of involutions $S$; we only consider finitely generated involution systems in this article, i.e., $S$ is finite. 

In \cite[Ch. IV, \S1.1-\S1.3]{Bo68}, groups generated by involutions are mentioned, and first properties of their length functions are stated. The aim of Bourbaki's treatment was to provide necessary and sufficient conditions for an involution system to be a Coxeter system. The focus on Coxeter systems seems to have overshadowed the study of involution systems for themselves as abstract groups, although numerous examples appear in the literature: for instance as {\em string $C$-groups} of regular abstract polytopes, see~\cite{McSc02} or \cite{Lee21}; {\em cactus groups}, see~\cite{BlowUps,HeKa06, Devadoss};  {\em  groups generated by point reflections} and Riemannian symmetric spaces, see~\cite{Lo69}; or recently {\em super Weyl groups} associated to Lie superalgebras~\cite{WeylSuper24}.

 In this article, we propose to initiate the general study of involution systems. Involution systems are naturally endowed with the {\em weak order} $(W,\leq_R)$, a poset defined by $u\leq_R v$ if a reduced word for $u$ is a prefix of a reduced word for $v$, or equivalently, if a geodesic in the (right) Cayley graph of $(W,S)$ from the identity $\id$ to $v$ passes through $u$. In the case of a Coxeter system, Bj\"orner~\cite{Bj84} showed that the weak order is a complete meet-semilattice (for a proof, see \cite[Theorem~3.2.1]{BjBr05}). This fact has many important consequences for Coxeter systems and their connected structures, such as in the study of:
 the {\em Tamari lattice}, {\em Cambrian fans} and {\em generalized associahedra}, see for instance~\cite{Asso12};
  {\em $0$-Hecke algebras} in the case of the symmetric group, see for instance \cite{TvW15} and the references therein; 
  the representation theory of quivers and their torsion classes~\cite{TorCl23}; 
  Garside families and Shi arrangements; see for instance \cite{DDH14,DFHM24} and the references therein; biautomaticity for Coxeter systems~\cite{OP22,Sa25}; root systems in Lie theory and reflection ordering in Kazhdan-Lusztig theory, see for instance~\cite{CePa04,Dy19}.
  
  \smallskip
   In this work, we  consider the following question: {\em For which involution systems is the weak order a complete meet-semilattice?} 
 
 \smallskip
 We start by discussing involution systems in general and some of their basic properties in  \S\ref{se:IS}. In particular, we discuss \emph{sign characters} for an involution system and define the notion of \emph{even involution systems} (\EIS for short), which are involution systems admitting a sign character. 

In \S\ref{se:WeakOrder}, we describe the weak order on involution systems. Moreover,  we give many examples of \emph{meet involution systems} (\MIS for short), i.e., involution systems whose weak order is a complete meet-semilattice. In particular, we prove that {\em cactus groups}, and more generally, involution systems with median Cayley graphs~\cite{genevois2022cactus} (called \emph{right-angled mock reflection groups} in \cite{RAMRG}) are \MIS. 

We then restrict our attention in \S\ref{se:Presentation} to \emph{even meet involution systems} (\EMIS for short), i.e., involution systems that are both \EIS and \MIS, and prove our first main result. 

\begin{thm}[Theorem~\ref{thm:presentationEMIS}] 
\label{thm:1}
Any involution system $(W,S)$ with a sign character whose weak order is a complete meet-semilattice (i.e., an \EMIS) admits the following finite presentation:
$$
W=\mpair{S\mid r^2=\id, \ ss_1\cdots s_k=tt_1\cdots t_k \, \forall r \in S \text{ and } s,t\in S\text{ bounded in } (W,\leq_R)},
$$  
where $ss_1\cdots s_k=tt_1\cdots t_k$ are reduced words for the join $s\vee_R t$ of bounded $s,t\in S$.
\end{thm}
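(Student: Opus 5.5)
The strategy is the standard argument behind Tits' solution to the word problem and Matsumoto's theorem, adapted to the weak order. Let $\widetilde W$ be the group defined by the presentation, and $\pi\colon \widetilde W\to W$ the natural surjection. The relations hold in $W$: when $s,t\in S$ are bounded, the join $s\vee_R t$ exists because $(W,\leq_R)$ is a complete meet-semilattice (a bounded subset has a join, namely the meet of its upper bounds). Any reduced word for $s\vee_R t$ that begins with $s$ and any that begins with $t$ then represent the same element. So it is enough to show that $\pi$ is injective. As usual, this reduces to a Matsumoto-type statement.

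\textbf{Claim A.} Any two reduced words for the same $w\in W$ are connected by a sequence of moves, each replacing a factor $ss_1\cdots s_k$ by $tt_1\cdots t_k$ (a defining braid-like relation).

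\textbf{Claim B.} Any word representing $\id$ in $W$ can be reduced to the empty word using these moves together with deletions of factors $rr$.

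Claim B gives injectivity directly.

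\emph{Proof of Claim A,} by induction on $\ell(w)$. Take reduced words $s\,u$ and $t\,v$ for $w$ with $s\neq t$. Then $s,t\leq_R w$, so $s,t$ are bounded and $j=s\vee_R t\leq_R w$ exists. Choose a reduced word $j\,x$ for $w$, meaning the concatenation of a reduced word for $j$ with a reduced word $x$ for $j^{-1}w$; this uses that $j\leq_R w$. Write the reduced words for $j$ as $ss_1\cdots s_k$ and $tt_1\cdots t_k$. Then $s\,u$ and $ss_1\cdots s_k x$ share the first letter, so the induction hypothesis applied to $s^{-1}w$ connects them. Likewise $t\,v$ is connected to $tt_1\cdots t_k x$. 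A single defining move connects $ss_1\cdots s_kx$ to $tt_1\cdots t_kx$. The lengths add correctly, since everything sits inside geodesics.

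A point to check is that the relations are independent of the chosen reduced words for $s\vee_R t$. By induction, any two reduced words of $j$ starting with $s$ are already connected by shorter relations.

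\emph{Proof of Claim B.} This is where the sign character enters, and I expect it to be the main obstacle. In a general involution system one cannot simply cancel. We need that if $w$ is reduced and $ws$ is not longer, then $\ell(ws)=\ell(w)-1$. Parity forbids $\ell(ws)=\ell(w)$: the sign character $\varepsilon$ has $\varepsilon(w)=(-1)^{\ell(w)}$ and $\varepsilon(s)=-1$.

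Now let $w$ be reduced with $\ell(ws)<\ell(w)$. Then $ws\leq_R w$, so $w$ has a reduced expression ending in $s$. By Claim A this reduced word is connected to the given one by the relations. Once the word ends in $ss$, deleting $ss$ yields a reduced word for $ws$.

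We then induct on word length, reading the word left to right and keeping a reduced prefix:
\begin{itemize}
\item if the next letter $s$ lengthens the prefix, we keep the prefix;
\item if $s$ shortens it, we rewrite and cancel as above.
\end{itemize}
A word representing $\id$ ends with the empty word. Hence every relation of $W$ is a consequence of the presented ones, and $\widetilde W\cong W$.

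Finiteness is automatic, since $S$ is finite and there is one relation per bounded pair. Earlier results on the weak order of an EIS, such as the characterization $u\leq_R v \iff \ell(v)=\ell(u)+\ell(u^{-1}v)$ and parity of lengths, would be invoked in these steps.
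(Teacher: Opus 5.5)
Your proof is correct, and it takes a genuinely different route from the paper's. The paper never rewrites words. It first develops a structure theory of irreducible cycles in $\Cay(W,S)$: short arcs of such cycles are geodesics, irreducible cycles weakly intersect and are convex, and the vertex opposite $\id$ in $C\in\Irr_{\id}$ is $s\vee_R t$. A Cayley $2$-complex argument then shows that the labels of the finitely many cycles in $\Irr_{\id}$ present $W$, and these labels are among the $\nu(s,t)$.

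Your Tits--Matsumoto rewriting argument uses only two facts. The first is that $s\vee_R t$ exists and lies below every common upper bound of $s,t$, so Claim~A in fact holds in any \MIS. The second is parity, which rules out $\ell(ws)=\ell(w)$ in Claim~B. This shows exactly where each hypothesis enters. It is consistent with $\mathcal A_5$, where Claim~A still holds but the cancellation step breaks down. As a by-product you get a Matsumoto-type theorem for \EMIS that the paper does not state: any two reduced words are connected by the join moves. Your argument also shows directly, at the level of words, that every relation of $W$ follows from the join relations. The paper instead deduces simple connectivity of the $2$-complex from the fact that $\Irr(\G)$ generates the $\Z_2$-cycle space $\mathcal C(\G)$, and passing from $\Z_2$-homology to $\pi_1$ requires more care in general.

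In exchange, the paper's approach yields structural information it uses later: uniqueness of the irreducible cycle through two given edges at a vertex, the possibly smaller $2$-recognizable presentation, the companion graph, and the rank-$3$ classification.

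Your remark about independence from the chosen reduced words for $s\vee_R t$ is unnecessary. In Claim~A you can always take the prescribed words as the prefix of the reduced word for $w$.
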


The proof is based on the study of {\em irreducible cycles} in the Cayley graph $\G$ of an \EMIS. These are elements of the cycle space of $\G$ (or, equivalently, the first singular homology of $\G$ over $\Z_2$) that cannot be decomposed into a sum of cycles of strictly smaller length. In particular, we show in Proposition \ref{weakly-lemma} that the set of irreducible cycles of $\G$ \emph{weakly intersects}, meaning that the intersection of any two distinct irreducible cycles does not contain consecutive edges (in fact, we prove something stronger in Corollary \ref{cor: stronger weak intersection}). Then, to prove Theorem~\ref{thm:1}, we show that the relations of an \EMIS are given by the words labeling all irreducible cycles containing $\id$ (Proposition~\ref{prop:RelationsIrred}), and weak intersection guarantees finiteness of the presentation and highly restricts the possible words labeling such cycles. The case of finding a presentation of a meet involution system without a sign character is open; see \S\ref{ss:Alt5} for such an example.

Observe also that the presentation in Theorem~\ref{thm:1} provides a natural  Artin-Tits group analog for an \EMIS $(W,S)$ that is obtained by removing the relations $s^2$ for each $s \in S$. Such groups have been studied by Scott in \cite{RAMRG} under the name of \emph{mock Artin groups} when the Cayley graph of $(W,S)$ is median (i.e., when the \EMIS in question is a right-angled mock reflection group). The Artin-Tits group of a general \EMIS remains to be studied.

As an easy corollary to Theorem~\ref{thm:1}, we obtain a new characterization for an involution system to be a Coxeter system in terms of the weak order.

\begin{thm}[Corollary~\ref{cor:CoxeterCharac}]
	Let $(W,S)$ be an involution system. Then the following conditions are equivalent.
	\begin{enumerate}
		\item $(W,S)$ is a Coxeter system;
		\item $(W,S)$ is an \EMIS and for any $s,t\in S$ bounded, the reduced words $ss_1 \cdots s_k$ and $tt_1 \cdots t_k$ representing $s \vee_R t$ (as in Theorem~\ref{thm:1}) have their letters in $\{s,t\}$.
	\end{enumerate}
\end{thm}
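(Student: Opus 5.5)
Both implications come down to comparing presentations: the presentation of Theorem~\ref{thm:1} on one side, and the standard Coxeter presentation on the other.

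\emph{(1) $\Rightarrow$ (2).} Suppose $(W,S)$ is a Coxeter system. Then $\varepsilon(s)=-1$ defines a sign character, and Bj\"orner's theorem says the weak order is a complete meet-semilattice. Hence $(W,S)$ is an \EMIS. Now take $s,t\in S$ bounded in $(W,\leq_R)$. By the standard Coxeter theory (e.g. \cite[Ch.~3]{BjBr05}), $s$ and $t$ have an upper bound exactly when the parabolic subgroup $W_{\{s,t\}}$ is finite, i.e.\ $m_{st}<\infty$. In that case $s\vee_R t$ is the longest element $w_0(\{s,t\})$ of $W_{\{s,t\}}$. The reason: every element $w$ with $s,t\leq_R w$ has both $s$ and $t$ as left descents, so $w$ has a reduced factorization starting with $w_0(\{s,t\})$. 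The only reduced words of $w_0(\{s,t\})$ are the two alternating words $sts\cdots$ and $tst\cdots$ of length $m_{st}$, and these use only the letters $s$ and $t$.

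\emph{(2) $\Rightarrow$ (1).} Assume $(W,S)$ is an \EMIS and that for each bounded pair $s,t$ the words $ss_1\cdots s_k$ and $tt_1\cdots t_k$ use only letters in $\{s,t\}$. Each such word is reduced, so no two consecutive letters are equal, and the word must alternate. Thus $ss_1\cdots s_k=(st)^{\cdot}$ is the alternating word of length $m=k+1$ starting with $s$, and $tt_1\cdots t_k$ is the alternating word of length $m$ starting with $t$. The relation $\underbrace{sts\cdots}_{m}=\underbrace{tst\cdots}_{m}$ together with $s^2=t^2=\id$ is equivalent to $(st)^m=\id$.

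By Theorem~\ref{thm:1}, $W$ is presented by the relations $r^2=\id$ and $(st)^{m_{st}}=\id$ for bounded pairs, with no relation for unbounded pairs. Setting $m_{st}=\infty$ for unbounded pairs, this is precisely a Coxeter presentation. To conclude that $(W,S)$ is a Coxeter system, I still need $m_{st}$ to equal the order of $st$ in $W$, and in particular that $s\neq t$ in $W$ and that $st$ has infinite order when $s,t$ are unbounded. This is automatic once the group defined by the presentation is $W$ itself: in the Coxeter group with matrix $(m_{st})$, the order of $st$ is exactly $m_{st}$. So the presented group, which is $W$, is a Coxeter group with generating set $S$.

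\emph{Main obstacle.} The delicate point is the matching of lengths. One must check that the common length $k+1$ of the two words really is the order of $st$, rather than the relation merely being a consequence of some shorter one. This follows from the Coxeter group property above, since the presentation determines the group. A secondary check is that the two reduced words are distinct, which holds because $s\neq t$, so they begin with different letters.
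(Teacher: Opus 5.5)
Your proof is correct and follows the paper's argument. For (1)$\Rightarrow$(2), the join of a bounded pair in a Coxeter system is the longest element of the finite dihedral parabolic subgroup $W_{\{s,t\}}$. For (2)$\Rightarrow$(1), the presentation of Theorem~\ref{thm:1} then becomes a Coxeter presentation; your extra details (the reduced words alternate, so each relation is $(st)^{m}=\id$, and $\ord_W(st)=m_{st}$ holds automatically once $W$ has a Coxeter presentation) fill in what the paper leaves implicit.
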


We also introduce in \S\ref{se:Presentation} the notions of \emph{companion graph} and \emph{Coxeter companion} of an \EMIS $(W,S)$. In particular, the companion graph of $(W,S)$ is a graph encoding for each pair $s,t \in S$ the length of the unique irreducible cycle containing the edges $\{\id,s\}$ and $\{\id,t\}$ in the Cayley graph of $(W,S)$. The Coxeter companion of $(W,S)$ is then defined as the Coxeter system having as Coxeter type the companion graph of $(W,S)$. We also define the Coxeter type of an involution system in a similar way as for a Coxeter system (see Definition~\ref{def:CoxeterGraphIS}). This leads to another new characterization of Coxeter systems.

\begin{thm}[Corollary \ref{cor:CompanionCox}]
	Let $(W,S)$ be an \EMIS. Then $(W,S)$ is a Coxeter system if and only if its Coxeter type and companion graph are the same. 
\end{thm}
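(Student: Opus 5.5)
We would deduce this from Theorem~\ref{thm:1} together with the characterization of Coxeter systems in Corollary~\ref{cor:CoxeterCharac}. First we fix the meaning of the two graphs. The Coxeter type of $(W,S)$ records, for each pair $s,t\in S$, the order $m_{st}$ of $st$ in $W$, as in Definition~\ref{def:CoxeterGraphIS}. The companion graph records, for each pair, the length $2k_{st}$ of the unique irreducible cycle of $\G$ through the edges $\{\id,s\}$ and $\{\id,t\}$. When $s,t$ are bounded, this cycle is labeled by the relation $ss_1\cdots s_k=tt_1\cdots t_k$ for $s\vee_R t$, so $k_{st}=\ell(s\vee_R t)$. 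When $s,t$ are unbounded there is no such cycle, and we use the convention $\infty$. We also use that $\ell(st)=2$ whenever $s\neq t$, since the sign character forbids $st=\id$ and $st\in S$.

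For the direction ``Coxeter $\Rightarrow$ equal graphs'', let $(W,S)$ be a Coxeter system. By Bj\"orner, $s,t$ are bounded iff $m_{st}<\infty$. In that case $s\vee_R t$ is the longest element of the parabolic subgroup $W_{\{s,t\}}$, whose two reduced words are the alternating words of length $m_{st}$. The irreducible cycle through $\{\id,s\}$ and $\{\id,t\}$ is therefore the $2m_{st}$-cycle of the dihedral coset, and the companion label is $m_{st}$. If $m_{st}=\infty$, the pair is unbounded and both labels are $\infty$.

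For the converse, assume the Coxeter type and the companion graph coincide. By Corollary~\ref{cor:CoxeterCharac}, it suffices to show that for bounded $s,t$ the reduced words $ss_1\cdots s_k$ and $tt_1\cdots t_k$ of $s\vee_R t$ use only the letters $s,t$.
\begin{enumerate}
\item Let $m=m_{st}$, which is finite: boundedness gives a finite cycle label $k+1$, which equals $m_{st}$ by hypothesis.
\item In $W$ the alternating words $sts\cdots$ and $tst\cdots$ of length $m$ are equal, since $(st)^m=\id$.
\item These alternating words are reduced. Otherwise, using the sign character and parity, a shorter relation $(st)^j=\id$ with $j<m$ would follow, contradicting that $m$ is the order of $st$.
\item Thus $w_0:=sts\cdots$ (length $m$) is a common upper bound of $s$ and $t$ of length $m$.
\item The join $s\vee_R t$ has length $k+1=m$ and satisfies $s\vee_R t\leq_R w_0$, so $s\vee_R t=w_0$.
\end{enumerate}

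The main obstacle is that $w_0$ might have other reduced words beginning with $s$ involving letters outside $\{s,t\}$, so the words in Theorem~\ref{thm:1} need not a priori be the alternating ones. To handle this, we use that the irreducible cycle through $\{\id,s\}$ and $\{\id,t\}$ is unique. The alternating words already give a cycle of length $2m$ through these edges, equal to the minimal length. Irreducible cycles weakly intersect (Proposition~\ref{weakly-lemma}), so this minimal cycle must be the irreducible one, and we may take $s_i,t_i\in\{s,t\}$. Applying Corollary~\ref{cor:CoxeterCharac} then concludes.
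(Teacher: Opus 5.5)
Your overall strategy (show that the relators coming from irreducible cycles are alternating in $s,t$, then read off a Coxeter presentation) is the paper's. However, step 3, the heart of the converse, rests on a false principle: in an \EMIS, a non-reduced alternating word does not yield a relation $(st)^j=\id$. For instance, in the \EMIS $\mpair{a,b,c\mid a^2,b^2,c^2,abac,(bc)^n}$ with $n\geq 2$, one has $c=aba$. So $W$ is dihedral of order $4n$ with $\ord_W(ab)=2n$, yet the alternating word $aba$ of length $3<2n$ is not reduced.

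The conclusion of step 3 is still true in your setting, but it comes from the join, not from the order. Let $j$ be minimal such that the alternating word of length $j$ beginning with $s$ is not reduced, and let $x$ be the alternating element of length $j-1$. Then $D_R(x)$ contains the last letter of $x$, and by parity it also contains the next alternating letter. So $s,t\leq_L x$, i.e.\ $s,t\leq_R x^{-1}$ (Proposition~\ref{prop:Descents}), whence $m=\ell(s\vee_R t)\leq j-1$. Hence alternating words of length at most $m$ are reduced.

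Your treatment of the ``main obstacle'' is also invalid. Proposition~\ref{weakly-lemma} compares two \emph{irreducible} cycles, and you have not shown that the alternating $2m$-cycle is irreducible. What closes this step is Proposition~\ref{prop:EMISJoin}(3), which rests on convexity (Proposition~\ref{prop:IrrConvex}): $s\vee_R t$ has exactly two reduced words, namely the two halves of the irreducible cycle. So once step 5 gives $s\vee_R t=w_0$, those halves are the alternating words.

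The second gap is in step 1 and in your description of the companion graph: you assume that every bounded pair $s,t$ lies on an irreducible cycle through $\{\id,s\}$ and $\{\id,t\}$. The paper only proves the converse: such a cycle forces boundedness (Proposition~\ref{prop:EMISJoin}(1)). Whether bounded pairs always carry one is exactly Question~\ref{qu:Join}, which is left open in Remark~\ref{rem:JoinIrred}. Consider a bounded pair with no such cycle. Its companion label is $\infty$, so the hypothesis gives $\ord_W(st)=\infty$. Step 1 then fails, and the reduced words of $s\vee_R t$ cannot be alternating. Hence condition (2) of Corollary~\ref{cor:CoxeterCharac}, which quantifies over all bounded pairs, cannot be verified along your route.

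The fix, which is essentially what the paper does, is to use the presentation $W=\mpair{S\mid \{r^2\mid r\in S\}\cup\{\nu_C\mid C\in\Irr_\id\}}$ of Proposition~\ref{prop:RelationsIrred} instead. Each $C\in\Irr_\id$ goes through $\{\id,s\}$ and $\{\id,t\}$ for a pair with finite companion label $m=\ord_W(st)$. The corrected steps 2--5 give $\nu_C=(st)^{m}$ up to orientation. Pairs with label $\infty$ have $\ord_W(st)=\infty$ and require no relation. So this is the Coxeter presentation with matrix $\ord_W(st)$. Your forward direction makes the same presupposition, but there it is covered by the standard fact that dihedral coset cycles in a Coxeter group are irreducible, which the paper also takes for granted.
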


In \S\ref{se:2recognizable}, we introduce the notion of a {\em $2$-recognizable presentation} (Definition~\ref{def: 2-rec subsets}) as a means to describe the words appearing in Theorem~\ref{thm:1}. The idea is to encode the weak intersection of irreducible cycles in terms of combinatorics of words. Roughly speaking, a $2$-recognizable is a presentation $\langle S \mid \{s^2 \mid s \in S\} \cup R_0 \rangle$ where the words of $R_0$ cannot overlap by two consecutive letters. This condition can be rephrased in terms of small cancellation theory (over free products) \cite{LyndonScupp}, see Remark~\ref{rem: small cancellation}. 

Unfortunately, it is not the case that every \emph{$2$-recognizable involution system} (i.e., an involution system given by a $2$-recognizable presentation) is an \EMIS, see \S\ref{sec: 2-rec not EMIS} for a discussion. However, we show this is true for rank $3$ in \S\ref{Rank 3 section}, and the proof depends on the classification of $2$-recognizable involution systems of rank 3 we provide below.

\begin{thm}[Theorem \ref{thm:rk3-reco}]
	\label{thm: 2-rec intro}
	Let $(W,S)$ be an involution system of rank 3. Then $(W,S)$ is a $2$-recognizable involution system if and only if it admits one of the following presentations (where $\infty$ means there is no relation): 
	\begin{enumerate}[(i)]
		\item $\langle a,b,c \, | \, a^2=b^2=c^2 = (abc)^{2m}=\id\rangle$, $m \in \N_{\geq 1}$;
		
		\item $\langle a,b,c \, | \, a^2=b^2=c^2 = (abacbc)^{m}=\id \rangle$, $m \in \N_{\geq 1}$;
		
		\item $\langle a,b,c \, | \, a^2=b^2=c^2 = (abac)^m=(bc)^n=\id\rangle$, $m \in \N_{\geq 1}$, $n \in \N_{\geq 2} \cup \{\infty\}$; 
		
		\item $\langle a,b,c \, | \, a^2=b^2=c^2 = (ab)^m=(bc)^n=(ac)^l=\id\rangle$, $m,n,l \in \N_{\geq 2} \cup \{\infty\}$ (the case of Coxeter systems of rank $3$).
		
	\end{enumerate}    
\end{thm}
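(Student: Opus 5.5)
The plan is to prove both directions by reducing the definition of a $2$-recognizable presentation (Definition~\ref{def: 2-rec subsets}) to a finite combinatorial enumeration of cyclic words over the alphabet $\{a,b,c\}$.

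For the "if" direction, I would check directly that each of the four listed presentations is $2$-recognizable. Write each relator of $R_0$ as a power $u^m$ of a primitive cyclically reduced word $u$, and list the two-letter cyclic factors of $u$ and of $u^{-1}$:
\begin{itemize}
\item for $u=abc$ they are $ab,bc,ca$ and $cb,ba,ac$;
\item for $u=abacbc$ all six ordered pairs occur, each exactly once in $u$, and $u^{-1}=cbcaba$ is a rotation of $u$;
\item for $u=abac$ they are $ab,ba,ac,ca$, which leaves the pairs $bc,cb$ for the dihedral relator $(bc)^n$;
\item in the Coxeter case each unordered pair $\{s,t\}$ is used by exactly one relator $(st)^{m}$.
\end{itemize}
In every case no two distinct relators (or cyclic conjugates and inverses of them) share two consecutive letters, which is exactly the non-overlap condition. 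The parity constraint in (i) (exponent $2m$) should come from the even-length requirement built into the definition, since relator lengths must be compatible with a sign character.

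For the "only if" direction, I would argue as follows. Since $s^2=\id$ is imposed, every relator in $R_0$ may be taken cyclically reduced with no letter repeated consecutively, so each is a cyclic word alternating among $a,b,c$. Write each relator as $u^m$ with $u$ primitive. The key counting observation is that there are only six ordered pairs of distinct letters. The $2$-recognizability condition says that each ordered pair occurs at most once among the cyclic factors of the roots $u$ and their inverses, except for occurrences that are forced because $u^{-1}$ is itself a rotation of $u$. Hence the total length of the roots is at most $6$, and the roots use disjoint sets of unordered letter pairs. I would then enumerate primitive cyclic words of length $2$ through $6$ over $\{a,b,c\}$, up to relabeling, rotation and inversion, and keep those whose factors are pairwise distinct:
\begin{itemize}
\item length $2$ gives $st$;
\item length $3$ gives $abc$, which together with its inverse exhausts all six pairs, so it must be the only relator;
\item length $4$ gives only $abac$, since $abcb$ is a relabeling; it uses the pairs $\{a,b\},\{a,c\}$ and leaves room only for a relator $(bc)^n$;
\item length $5$ is ruled out by the parity requirement, or by repeated factors;
\item length $6$ gives only $abacbc$, and this must be the sole relator.
\end{itemize}
Combining roots with disjoint pair usage yields exactly the cases (i)--(iv), with $\infty$ recording an unused pair.

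The main obstacle is the precise bookkeeping of the overlap condition for a single relator whose inverse is a rotation of itself, as for $abacbc$ or $(st)^m$. There the repeated factors are allowed, and I must make sure the enumeration neither wrongly excludes such words nor admits words like $abcacb$ whose factors genuinely repeat. I would first settle this by proving a small lemma: a primitive cyclic word $u$ satisfies the condition if and only if the map sending a position of $u$ to its two-letter factor is injective on $u$, modulo the identification induced by $u^{-1}\sim u$. After that lemma, the length-by-length enumeration is routine.
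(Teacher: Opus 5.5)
Your argument is correct, but it is organized differently from the paper's. The paper works directly with conditions (3) and (4). It splits according to $|R_0|\in\{0,1,2,3\}$. For a single relator $w$ starting with $ab$, it reads $w$ letter by letter: it asks whether $w$ has a factor $xyx$, and what the fourth and fifth letters are. At each step it uses condition (4) to force the next letter and so proves periodicity, arriving at $(abc)^{2m}$, $(ab)^m$, $(abac)^m$ or $(abacbc)^m$. Condition (3) then decides which of these can coexist.

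You instead treat the reformulation (5) as an injectivity statement. Write $w=u^m$ with $u$ primitive. Then $\cyc(w)$ consists of the $|u|$ rotations of $w$, plus $|u|$ further rotations of $w^{-1}$ when $[u^{-1}]\neq[u]$. All words of $\cyc(R_0)$ must begin with distinct ordered pairs of distinct letters. Pigeonhole on the six ordered pairs then bounds the total root length by $6$. Periodicity comes for free from the primitive-root decomposition, and what remains is a finite enumeration. For length $6$ this amounts to listing the Eulerian circuits of the complete directed graph on $\{a,b,c\}$, all of which are relabeled rotations of $abacbc$.

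Your route is more conceptual and extends verbatim to any rank, bounding the total root length by $|S|(|S|-1)$. The paper's route avoids the enumeration, but its propagation steps are more ad hoc.

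There are two slips to fix:
\begin{itemize}
\item $abcacb$ is not a word with repeated factors. Its six cyclic $2$-factors $ab,bc,ca,ac,cb,ba$ are all distinct. It is a relabeled rotation of $abacbc$: rotate to $cacbab$ and apply $c\mapsto a$, $a\mapsto b$, $b\mapsto c$.
\item Parity does not exclude a primitive root of length $5$, since its square has length $10$. What excludes it is that every cyclically reduced cyclic word of length $5$ on three letters is a relabeling of $ababc$, in which $ab$ occurs twice.
\end{itemize}

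Finally, state your lemma in the precise form above rather than ``modulo the identification induced by $u^{-1}\sim u$''. In particular, include the extra requirement that, when $[u^{-1}]\neq[u]$, no $2$-factor of $u$ has its reverse also occurring in $u$.
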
  

In \S\ref{Rank 3 section}, we discuss the subclass of {\em quasi-Coxeter systems}. These are involution systems $(W,S)$ having Hasse diagram of the weak order isomorphic to the Hasse diagram of the weak order of a Coxeter system. In other words, quasi-Coxeter systems are \EMIS whose weak order is lattice isomorphic to the weak order of a Coxeter system (Proposition~\ref{prop:QuasiCoxeter}). We prove, in particular, the following results about quasi-Coxeter systems.

\begin{thm} $\ $
\label{thm:2} 
\begin{enumerate}
\item The growth series of quasi-Coxeter systems are regular (Corollary~\ref{cor:regular}).
\item  Quasi-Coxeter systems are biautomatic (Theorem~\ref{thm:Automatic}).
\end{enumerate}
\end{thm}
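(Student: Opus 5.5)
The key observation is that a quasi-Coxeter system $(W,S)$ has the same Hasse diagram as the weak order of some Coxeter system $(W',S')$. Both growth series and biautomaticity are governed by the combinatorics of geodesics in the Cayley graph, so I would transport the known results for Coxeter systems across this identification. The plan is to first make the identification precise. By Proposition~\ref{prop:QuasiCoxeter}, there is a poset isomorphism $\varphi\colon (W,\leq_R)\to (W',\leq_R)$. Since both orders are graded by length, with cover relations $w \lessdot ws$ exactly when $\ell(ws)=\ell(w)+1$, $\varphi$ preserves length. It also induces a bijection between the edges of the two Cayley graphs: every edge of a Cayley graph of an involution system with sign character joins elements whose lengths differ by exactly one, so every edge is a cover relation. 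Hence $\varphi$ is in fact a graph isomorphism $\Cay(W,S)\cong\Cay(W',S')$ fixing $\id$, though not necessarily respecting labels.

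For (1), the growth series $\sum_w q^{\ell(w)}$ equals the generating function of the rank sizes of the weak order. This is therefore the same for $(W,S)$ and $(W',S')$, and the rationality of Coxeter growth series (Bourbaki/Steinberg) gives the claim. The only point to check is that ``regular'' in Corollary~\ref{cor:regular} means rational, or is otherwise a statement about the numbers of elements of each length. Either way it is determined by the graded poset.

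For (2), biautomaticity, which is the hard part, I would not simply transport along $\varphi$, because automatic structures depend on labels and $\varphi$ may scramble edge labels. Instead I would reproduce the Coxeter proof (Osajda--Przytycki, or the Garside-shadow/Brink--Howlett approach) intrinsically. The plan has three steps.
\begin{enumerate}
\item Define a language of geodesic normal forms, for instance lexicographically first reduced words, or words built from a finite ``Garside shadow'' (a finite meet- and suffix-closed subset of $W$ containing the joins $s\vee_R t$), which exists because joins of bounded pairs of generators are governed by Theorem~\ref{thm:1}.
\item Show the language is regular, using finiteness of the relevant ``cone types''. Cone types are determined by the labelled Cayley graph locally; since the presentation of Theorem~\ref{thm:1} is finite and the irreducible cycles weakly intersect, I expect finitely many cone types to follow from the bounded size of irreducible cycles through $\id$. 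The geometry (lengths of cycles) matches the Coxeter companion via $\varphi$.
\item Verify the two-sided fellow traveller property. I would use that meets and joins exist and that the Cayley graph is isomorphic to a Coxeter Cayley graph, so that convexity and median-like (mediangle) arguments from the Coxeter case transfer verbatim as graph-theoretic statements. Fellow travelling is a metric property of paths, and $\varphi$ is an isometry, so the fellow traveller estimates for images of normal forms hold.
\end{enumerate}

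The main obstacle is reconciling labels. The isometry $\varphi$ transports the metric estimates but not the regularity of the language, which concerns labels. To handle this, I would try to show that the normal-form geodesics of $W'$ pull back under $\varphi$ to a set of geodesics in $W$ recognized by a finite automaton. The argument would be that the labelling on $\Cay(W,S)$ is determined locally by the finitely many irreducible cycle types through each vertex, so a finite-state transducer converts labels between the two graphs along geodesics. Establishing this locality, via weak intersection of irreducible cycles (Proposition~\ref{weakly-lemma}), is where I expect the real work to lie.
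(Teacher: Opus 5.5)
\textbf{Part (2) has a genuine gap, and it sits where you decline to transport along $\varphi$.} Your Step~3 remark is the right mechanism: fellow travelling is metric and $\varphi$ is an isometry. But it only applies to a normal form on the Coxeter companion that does not depend on labels.

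The missing idea is that the Osajda--Przytycki voracious language is built from walls and the weak order alone. Its projection satisfies $\psi(\overline p(g))=\overline p(\psi(g))$ for every automorphism $\psi$ of the unlabelled graph $\Cay(\overline W,\overline S)$ fixing $\id$. Write $\overline W$ for your $W'$ and $\varphi\colon W\to\overline W$. The paper transports the walls, sets $p=\varphi^{-1}\circ\overline p\circ\varphi$, and takes $\V$ to be the labels of $\varphi^{-1}$-images of voracious paths from $\id$. No label conversion is ever needed.

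A path starting at $x\in W$ labelled by a word of $\V$ is sent by $\varphi$ to a path starting at $\varphi(x)$ labelled by a word of $\overline{\V}$. This holds because $\psi\colon\overline h\mapsto\varphi(x)^{-1}\varphi\bigl(x\,\varphi^{-1}(\overline h)\bigr)$ is an automorphism fixing $\id$. Hence the two-sided fellow-traveller property is inherited. Regularity then comes from the Osajda--Przytycki automaton, whose states are subsets of the finitely many transported elementary walls.

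None of your substitutes supplies this:
\begin{enumerate}
\item A finite Garside shadow of a quasi-Coxeter system is not known to exist. The paper notes that $\varphi(B)$ is join-closed but leaves suffix-closedness open. Even for Coxeter groups, finiteness rests on Brink--Howlett's finiteness of small roots, not on Theorem~\ref{thm:1}.
\item Shortlex normal forms are defined through the labels. They are therefore not preserved by the automorphisms $\psi$, so their fellow-traveller estimates cannot be pulled back.
\item The relabelling transducer is unproven. Even granted, it only controls images of normal-form paths issued from $\id$. The two-sided property compares paths issued from $\id$ and from a neighbour, and that requires exactly the invariance under the $\psi$'s.
\end{enumerate}

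\textbf{Part (1).} Your argument correctly proves that $W(q)$ is rational; this is the paper's Corollary~\ref{cor:Growth}, proved the same way. The result cited in (1) is Corollary~\ref{cor:regular}, however, which says the language $\Red(W,S)$ is regular. That is a statement about words, not about the number of elements of each length, so equality of growth series does not give it.

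The paper deduces it from finiteness of cone types (Proposition~\ref{prop:cone}) via \cite[Theorem~1.2.9]{Ep+92}. Contrary to your Step~2, cone types do not depend on labels: $T(g)$ is the set of vertices $h$ such that $\id$ lies on a geodesic from $g^{-1}$ to $h$. So the graph isomorphism $\varphi$ fixing $\id$, which you did establish, satisfies $\varphi(T(g))=T(\varphi(g^{-1})^{-1})$. It therefore injects the cone types of $W$ into the finitely many cone types of $\overline W$.

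Your proposed route through bounded irreducible cycles and weak intersection cannot work as stated. It uses only properties shared by every \EMIS, so it would prove finiteness of cone types for all \EMIS, which the paper leaves open (\S\ref{se:FurtherOpen}). Even for Coxeter groups, finiteness of cone types is a consequence of Brink--Howlett's theorem, not of having a finite presentation.
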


We then show that, in rank 3, the notions discussed so far coincide, and that we can interpret these groups in a nice geometric way.

\begin{thm}[Theorem \ref{rank3-theorem}] \label{thm: rank 3 intro}
	Let $(W,S)$ be an involution system of rank $3$. Then the following statements are equivalent:
	\begin{enumerate}
		\item $(W,S)$ is an \EMIS; 
		\item $(W,S)$ admits a $2$-recognizable presentation; 
		\item $(W,S)$ admits one of the presentations in Theorem~\ref{thm: 2-rec intro}; 
		\item $(W,S)$ is a quasi-Coxeter system.
	\end{enumerate}
	Moreover, every \EMIS of rank $3$ is a discrete subgroup of isometries of the sphere, Euclidean plane or hyperbolic plane.
\end{thm}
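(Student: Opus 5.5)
The proof is a cycle of implications, (3)$\Rightarrow$(4)$\Rightarrow$(1)$\Rightarrow$(2)$\Rightarrow$(3), followed by a separate geometric realization argument. Two of the implications are already available. Theorem~\ref{thm: 2-rec intro} (Theorem~\ref{thm:rk3-reco}) gives (2)$\Leftrightarrow$(3). Proposition~\ref{prop:QuasiCoxeter} says quasi-Coxeter systems are \EMIS, which gives (4)$\Rightarrow$(1). So it remains to prove (1)$\Rightarrow$(2) and (3)$\Rightarrow$(4).

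\textbf{(1)$\Rightarrow$(2).} Take an \EMIS $(W,S)$ of rank $3$. Theorem~\ref{thm:presentationEMIS} presents it by the relations $s^2=\id$ together with the join relations $ss_1\cdots s_k=tt_1\cdots t_k$ for bounded pairs. Each join relation closes up into a word labeling an irreducible cycle through $\id$, by Proposition~\ref{prop:RelationsIrred}. By Proposition~\ref{weakly-lemma} and Corollary~\ref{cor: stronger weak intersection}, distinct irreducible cycles weakly intersect, so no two consecutive edges are shared. In words, two distinct relators, or two distinct cyclic shifts of one relator, cannot overlap in two consecutive letters. That is the $2$-recognizability condition of Definition~\ref{def: 2-rec subsets}. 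The point needing care is that shifts of a single relator that read the same cycle starting at a different vertex must be identified, so that we compare genuinely distinct cycles.

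\textbf{(3)$\Rightarrow$(4).} For each of the four families (i)--(iv), I would build a Coxeter system $(W',S')$ and an isomorphism between the Hasse diagrams of the weak orders. Family (iv) is already Coxeter. For the other families I would use the companion graph. In (i), the irreducible cycle through $\{\id,a\}$ and $\{\id,c\}$ is labeled $(abc)^{2m}$ read from both ends, so it has length $6m$; the companion should be a triangle group in which the pairs joined through this cycle get label $3m$. For (ii) and (iii) the labels are read off similarly.

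To produce the isomorphism, I would use the geometric realization below. Take the polygon tessellation of $\Sb^2$, $\E^2$ or $\Hb^2$ whose $2$-cells are the irreducible cycles. The Cayley graph of $W$ should then be the dual graph of a tessellation by triangles (or polygons) that are combinatorially the same as the Coxeter tessellation of the companion triangle group. In other words, $\G$ and the Coxeter Cayley graph are both $1$-skeleta of isomorphic polygonal complexes, with the faces through a vertex matched. Since the weak order is determined by distance from $\id$ in the graph, a graph isomorphism fixing $\id$ gives the Hasse diagram isomorphism.

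\textbf{Geometric statement.} For each presentation I would write down explicit generators: reflections, or point reflections and glide-type isometries of a triangle or quadrilateral fundamental domain. Then I would apply Poincaré's polygon theorem. The angle sum at vertices is $\pi/m$ (or the analogue), and the sign of the angle defect decides whether the model is spherical, Euclidean or hyperbolic. The polygon theorem also confirms that the presentation is exactly the one in (3), and that the group acts discretely with $\G$ as the dual $1$-skeleton. This in turn gives (4) via the cell-structure isomorphism described above.

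\textbf{Main obstacle.} The hardest step is finding the right fundamental domain for cases (i) and (ii), where the generators are not reflections. One natural guess is that $a,b,c$ act as half-turns, or as reflections composed with rotations. I would first try to realize $c$ as a reflection in an edge and $a,b$ as half-turns about the midpoints of the remaining edges of a triangle, and then check the cycle condition at the vertices against the relator $(abc)^{2m}$.
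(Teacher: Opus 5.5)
Your overall architecture is the paper's: $(2)\Leftrightarrow(3)$ is Theorem~\ref{thm:rk3-reco}, $(4)\Rightarrow(1)$ is Proposition~\ref{prop:QuasiCoxeter}, $(1)\Rightarrow(2)$ is Theorem~\ref{thm:EMIS is recognizable}, and $(3)\Rightarrow(4)$ plus the final assertion come from Poincar\'e's polygon theorem.

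\textbf{A fix in $(1)\Rightarrow(2)$.} You may not claim that every join relator $\nu(s,t)$ labels an irreducible cycle. Proposition~\ref{prop:EMISJoin} gives only the converse: every $C\in\Irr_\id$ is a join cycle. Whether every join cycle is irreducible is the open Question~\ref{qu:Join}. Use instead the presentation of Proposition~\ref{prop:RelationsIrred}, i.e.\ the set $R_0'$ in the proof of Theorem~\ref{thm:presentationEMIS}. Keep one word per cyclic class, since distinct cycles at $\id$ can carry the same cyclic label; the three hexagons at $\id$ in Example~\ref{ex:EMIS-A2} must contribute a single relator.

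\textbf{The genuine gap is $(3)\Rightarrow(4)$.} You never specify the fundamental domain and side pairings, and that is the entire content of this step. Your trial for (i), with one reflection and two half-turns, fails. With that side pairing all three vertices lie in a single vertex cycle of length $6$, so Poincar\'e's theorem returns the relator $(abacbc)^m$ of type (ii), never $(abc)^{2m}$. What works is the following.
\begin{itemize}
\item For (i): an equilateral triangle $ABC$ with angles $\pi/(3m)$, and $a,b,c$ the half-turns about the midpoints of the three sides. The vertex cycle is $A\to B\to C\to A$ with angle sum $\pi/m$, which gives $(abc)^{2m}$.
\item For (ii): the same triangle, with $b$ the reflection in $BC$ and $a,c$ half-turns. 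There is one vertex cycle of length $6$ with angle sum $2\pi/m$.
\item For (iii): the isosceles triangle with angles $\pi/(2m),\pi/(2m),\pi/n$, with $a$ the half-turn about the midpoint of the base $AB$ and $b,c$ the reflections in the legs.
\end{itemize}

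\textbf{The missing compatibility argument.} Your sketch does not explain why equilateral or isosceles triangles are needed, and this is what actually yields (4). In each case the opposite vertex lies on the perpendicular bisector of every side paired by a half-turn. So the half-turn and the reflection in that side send $\Delta$ to the same neighbouring triangle. Hence the tiling by $W$-translates of $\Delta$ coincides with the tiling of the Coxeter triangle group $(3m,3m,3m)$, resp.\ $(2m,2m,n)$. Both groups act simply transitively on the tiles, with generators moving $\Delta$ across its sides. Therefore both Cayley graphs are the dual graph of one and the same tiling, with $\id\leftrightarrow\Delta$. Until this is said, your ``isomorphic polygonal complexes with faces matched'' is only an assertion.

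\textbf{The degenerate case.} Case (iii) with $(m,n)=(1,\infty)$ gives a degenerate triangle with angles $\pi/2,\pi/2,0$ and needs a separate argument. There $W=\langle a,b,c\mid abac\rangle\cong D_\infty$, and its Cayley graph is a ladder. This ladder is isomorphic to the Cayley graph of its companion $\langle a,b,c\mid a^2,b^2,c^2,(ab)^2,(ac)^2\rangle$.

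Your mention of ``glide-type'' isometries is a red herring: the images of the generators must be involutions, so only reflections and half-turns occur.
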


 The proof of Theorem~\ref{thm: rank 3 intro} is based on an analysis of the $2$-recognizable presentations in Theorem~\ref{thm: 2-rec intro}. For each of these presentations, we provide a representation of $(W,S)$ as a discrete subgroup of isometries on the space $\X = \Sb^2, \E^2$ or $\Hb^2$ generated by reflections and rotations of angle $\pi$.  As a consequence, we classify all \EMIS of rank at most $3$, which are listed in Tables~\ref{table: Classification in rank 3}~and~\ref{table2: Classification in rank 3}. 

  Finally, in \S\ref{se:FurtherOpen}, we discuss further works and open problems.  In \S\ref{ss:Rank4}, we provide an example of an \EMIS of rank $4$ that is not a quasi-Coxeter system. We then discuss results and questions about growth series, (bi)automatic structures, the existence of geometric representations, as well as lattice-theoretic properties of \EMIS and parabolic subgroups. Lastly, Tatiana Smirnova-Nagnibeda and Megan Howarth have pointed out possible connections between mediangle graphs \cite{genevois2022rotation} and Cayley graphs of \EMIS, which we discuss in \S\ref{ss:Mediangleness}.

%%%%%%%%%%%%%%
\subsection*{Acknowledgments} The first  author (FDS) warmly thanks  Piotr Przytycki for many enlightening discussions and suggestions. The second author (CH) warmly thanks Angela Carnevale for discussions on weak orders on involution systems arising from a Coxeter system, Matthew Dyer for having shared his lecture notes~\cite{DyLectures} and Jon McCammond for having shared his thoughts on discrete reflection groups that are not Coxeter systems. The third author (AT) warmly thanks Leonid Rybnikov, and Tatiana Smirnova-Nagnibeda and Megan Howarth for pointing out the connections to mediangle graphs.

%%%%%%
\section{Involution systems}
\label{se:IS}

An {\em involution system} is a pair $(W,S)$ where $W$ is a group with identity $\id$ and $S\subseteq W$ is a set of involutions\footnote{By convention, an involution is an element $u\in W$ of order $2$, that is, $u\not =e$ and $u^2=\id$.} that generates~$W$. The cardinality $|S|$ is the {\em rank of $(W,S)$}. We say that the involution system $(W,S)$ is  {\em finite} (or {\em spherical}) if $W$ is a finite group. 

Amongst examples of involution systems are {\em Coxeter systems}, which we discuss in~\S\ref{ss:Coxeter} below; see for instance \cite{Hu90,BjBr05,AbBr08} for more details on Coxeter systems. This section is inspired by~\cite{DyLectures}.

\subsection{Length function} Let $(W,S)$ be an involution system. The \emph{length on $(W,S)$} is the function  $\ell_{(W,S)}: W \to \mathbb{N}$ defined by $\ell_{(W,S)}(\id)=0$ and for $w\not = \id$,
$$
 \ell_{(W,S)}(w)= \min\{k \in \mathbb{N}^* \mid w=s_1s_2\cdots s_k, \ s_i \in S\}.
$$
If there is no possible confusion, we simply write $\ell:=\ell_{(W,S)}$.

Denote by $S^*$ the free monoid on the alphabet $S$. For simplicity, we denote by  $s_1\dots s_k$ both the concatenation of letters in $S$ and the product of generators of $S$ in $W$.  A word $s_1\cdots s_k$ in $S^*$ is a {\em reduced word for $w\in W$} if   $w=s_1\dots s_k$ in $W$ and $k=\ell(w)$. 

For simplicity, we say that {\em $w=s_1\dots s_k$ is a reduced word}. We denote by $\Red(W,S)\subseteq S^*$ the set of all reduced words for $(W,S)$ and by $\Red_w(W,S)$ the set of reduced words for $w\in W$. Moreover, for $u,v,w\in W$, we say that: 
\begin{itemize}[leftmargin=\parindent]
\item {\em $w=uv$ is a reduced product}\index{reduced word!reduced product} if $\ell(w)=\ell(u)+\ell(v)$, i.e., the concatenation of any reduced word for~$u$ with any reduced word for~$v$ is a reduced word for $w$; 
\item {\em $u$ is a prefix\index{reduced word!prefix} of $w$} if a reduced word for $u$ is a prefix of a reduced word for~$w$;
\item {\em $v$ is a suffix\index{reduced word!suffix} of $w$} if a reduced word for $v$ is a suffix of a reduced word for~$w$.
\end{itemize}
Observe that if $w=uv$ is reduced, then  $u$ is a prefix of $w$ and $v$ is a suffix of $w$.  The following straightforward proposition states basic properties for the length function.

\begin{prop}\label{prop:Length} Let $(W,S)$ be an involution system and $w,w' \in W$. 
\begin{enumerate}
\item $\ell(w)=1$ if and only if $w \in S$.
\item The inverse of a reduced word is a reduced word: $\ell(w)=\ell(w^{-1})$.
\item $\ell(ww') \leq \ell(w)+\ell(w')$.
\item $\ell(ww') \geq |\ell(w)-\ell(w')|$.
\item $\ell(w)-1 \leq \ell(sw) \leq \ell(w)+1$, for any $s\in S$; or equivalently $\ell(w)-1 \leq \ell(ws) \leq \ell(w)+1$.            
\item All prefixes and suffixes of a reduced word are reduced.
\end{enumerate}
\end{prop}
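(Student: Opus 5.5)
Each item follows directly from the definition of $\ell$ as the minimal number of generators needed to express an element, combined with $s^{-1}=s$ for $s\in S$. I would prove the six items in order, reusing earlier ones where convenient.

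For (1): if $w\in S$, then $w\neq\id$, so $\ell(w)\geq 1$, and the one-letter word $w$ gives $\ell(w)\le 1$. Conversely, $\ell(w)=1$ means $w=s_1$ for some $s_1\in S$.

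For (2): if $w=s_1\cdots s_k$, then $w^{-1}=s_k\cdots s_1$, because each $s_i$ is an involution. Hence $\ell(w^{-1})\le\ell(w)$, and applying this to $w^{-1}$ gives equality. The reversed word of a reduced word has the same length, so it is reduced.

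For (3): concatenate a reduced word for $w$ with a reduced word for $w'$. The identity cases are trivial.

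For (4): write $w=(ww')w'^{-1}$ and use (3) and (2) to get $\ell(w)\le\ell(ww')+\ell(w')$. Symmetrically, $w'=w^{-1}(ww')$ gives $\ell(w')\le\ell(w)+\ell(ww')$. Together these give the absolute value bound.

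For (5): this is (3) and (4) applied with $w'=s$ or with the factors in the order $s,w$, since $\ell(s)=1$ by (1).

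For (6): suppose $s_1\cdots s_k$ is reduced and the prefix $s_1\cdots s_j$ could be written with fewer letters. Substituting that shorter expression into the word would express $w$ with fewer than $k$ letters, contradicting reducedness. The same argument applies to suffixes, or one can use (2) on reversed words.

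There is no real obstacle. The only points needing care are the convention $\ell(\id)=0$ and, in (6), the degenerate case of the empty prefix.
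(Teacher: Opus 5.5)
Your proposal is correct: each item follows from the definition of $\ell$ together with $s^{-1}=s$, exactly as you argue. The paper calls this proposition straightforward and omits the proof, and your direct verification is the routine argument it has in mind.
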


The {\em left descent set $D_L(w)$} and {\em right descent set $D_R(w)$} of an element of $w\in W$ are defined by
\begin{equation}
\label{eq:Descents}
D_L(w)=\{s \in S\mid \ell(sw)<\ell(w)\}\quad \text{and}\quad D_R(w)=\{s \in S\mid \ell(ws)<\ell(w)\}.
\end{equation}

%%%%%%
\subsection{Sign character and Even Involution Systems (\EIS)}\label{ss:EIS}

It might happen that, for some involution systems,  $\ell(sw)=\ell(w)$ for some $w\in W$ and $s\in S$. For a Coxeter system, it is well-known that $\ell(sw)=\ell(w)\pm1$ for all $s\in S$ and all $w\in W$. In fact, this is a consequence of the existence of a sign character for $(W,S)$; see for instance \cite{DyLectures}.

\begin{prop}\label{prop:Sign} Let $(W,S)$ be an involution system. Then the following statements are equivalent:
\begin{enumerate}
\item $(W,S)$ admits a {\em sign character}, that is, there is a group epimorphism $\varepsilon:W\to \{\pm 1\}$ such that $\varepsilon (s)=-1$ for all $s\in S$;
\item  $\ell(sw)=\ell(w)\pm 1$ for all $w\in W$ and $s\in S$;
\item $\ell(ws)=\ell(w)\pm 1$ for all $w\in W$ and $s\in S$.
\end{enumerate}
In this case $\varepsilon(w)=(-1)^{\ell(w)}$ for all $w\in W$ and the sign character is unique.
\end{prop}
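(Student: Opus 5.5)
We prove the cycle of implications (1)$\Rightarrow$(2)$\Rightarrow$(1) and (1)$\Leftrightarrow$(3), and then deduce the final assertion. The key tool throughout is a parity argument combining the sign character with Proposition~\ref{prop:Length}.

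For (1)$\Rightarrow$(2), suppose $\varepsilon$ is a sign character. First show that $\varepsilon(w)=(-1)^{\ell(w)}$. Indeed, write $w=s_1\cdots s_k$ with $k=\ell(w)$; since $\varepsilon$ is a homomorphism with $\varepsilon(s_i)=-1$, we get $\varepsilon(w)=(-1)^k$. Now take $s\in S$. Then $\varepsilon(sw)=-\varepsilon(w)$, so $\ell(sw)$ and $\ell(w)$ have different parities. By Proposition~\ref{prop:Length}(5), $\ell(sw)\in\{\ell(w)-1,\ell(w),\ell(w)+1\}$. Parity excludes $\ell(w)$, so $\ell(sw)=\ell(w)\pm1$. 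The same argument with $ws$ gives (1)$\Rightarrow$(3).

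For (2)$\Rightarrow$(1), define $\varepsilon(w)=(-1)^{\ell(w)}$ and check that it is a homomorphism. By (2), $\varepsilon(sw)=-\varepsilon(w)$ for every $s\in S$ and $w\in W$. Writing an arbitrary $u$ as $s_1\cdots s_k$ (not necessarily reduced) and inducting on $k$ gives $\varepsilon(uw)=(-1)^k\varepsilon(w)$. Taking $w=\id$ yields $\varepsilon(u)=(-1)^k$, so $\varepsilon(uw)=\varepsilon(u)\varepsilon(w)$. Moreover, $\varepsilon(s)=-1$ because $\ell(s)=1$ by Proposition~\ref{prop:Length}(1). The map is surjective since $S\neq\emptyset$; if $S$ were empty there would be nothing to do, and we tacitly assume $S$ is nonempty. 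The implication (3)$\Rightarrow$(1) is symmetric: one can use right multiplication, or apply (2) to inverses via Proposition~\ref{prop:Length}(2), since $\ell(ws)=\ell(sw^{-1})$.

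Uniqueness follows because any sign character is determined on the generating set $S$; equivalently, the formula $\varepsilon(w)=(-1)^{\ell(w)}$ was derived above for an arbitrary sign character. There is no real obstacle here. The only step needing care is the homomorphism verification in (2)$\Rightarrow$(1), where one must use arbitrary, not necessarily reduced, words for $u$.
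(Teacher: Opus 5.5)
Your proof is correct and takes essentially the same route as the paper. Both arguments use parity from the sign character together with Proposition~\ref{prop:Length}(5) for (1)$\Rightarrow$(2), and both define $\varepsilon(w)=(-1)^{\ell(w)}$ and induct for the converse; the paper inducts on $\ell(u)$ using reduced words, which works equally well, so non-reduced words are not actually needed. Both also pass between left and right multiplication via inverses. One small correction: if $S=\emptyset$ then (2) holds vacuously while (1) fails, so nonemptiness of $S$ is genuinely required rather than a case with ``nothing to do.''
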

\begin{proof} Assume there is a sign character for $(W,S)$. Therefore $\varepsilon(w)=(-1)^{\ell(w)}$ for all $w\in W$. In particular, for $s\in S$ and $w\in W$, we have $\epsilon(sw)=\epsilon(s)\epsilon(w)=-(-1)^{\ell(w)}=(-1)^{\ell(w)\pm1}$. Therefore $\ell(w)\not = \ell(sw)$ and we conclude by Proposition~\ref{prop:Length}~(5). 

Conversely, assume that $\ell(sw)=\ell(w)\pm 1$ for all $w\in W$ and $s\in S$. We prove that the function $\varepsilon:W\to \{\pm 1\}$ defined by $\varepsilon(w)=(-1)^{\ell(w)}$ is a group epimorphism. Since $\varepsilon(\id)=0$ and $\varepsilon (s)=-1$ for all $s\in S$, this function is surjective. Let $w\in W$ and $s\in S$, then:
$$
\varepsilon (sw)=(-1)^{\ell(sw)}=(-1)^{\ell(sw)\pm1}=-(-1)^{\ell(w)}=\varepsilon(s)\varepsilon(w).
$$
We conclude that $\varepsilon(uv)=\varepsilon(u)\varepsilon(v)$ by induction on $\ell(u)$. The equivalence between (2) and (3) follows directly from Proposition~\ref{prop:Length}.
\end{proof}

\begin{defi}\label{def:EIS} An involution system $(W,S)$  is an {\em even involution system}, or \EIS for short, if $(W,S)$ has a sign character.
\end{defi}

Any Coxeter system is an \EIS. However, there are involution systems that do not have a sign character: for instance the alternate group $\mathcal A_5$ generated by three involutions as in~\S\ref{ss:Alt5} is not an $\EIS$.

\begin{prop}\label{prop:EIS-relations} Let $(W,S)$ be an involution system and $R\subseteq S^*$ a set of relations, i.e., $W=\mpair{S\mid R}$. Then $(W,S)$ is an \EIS if and only if any word $w$ in $R$ has an even number of letters in $S$. 
\end{prop}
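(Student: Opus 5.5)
The plan is to use the universal property of presentations and prove the two implications separately. Write $W=\mpair{S\mid R}$, where $R\subseteq S^*$. The free monoid $S^*$ maps onto the free group $F(S)$, and $W\cong F(S)/N$, where $N$ is the normal closure of $R$. Whether $(W,S)$ is an \EIS is decided by Proposition~\ref{prop:Sign}(1): it asks for a homomorphism $\varepsilon:W\to\{\pm1\}$ with $\varepsilon(s)=-1$ for every $s\in S$. Such a homomorphism is automatically surjective, because $S$ is nonempty.

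\textbf{Relations of even length give a sign character.} Suppose every word in $R$ has an even number of letters. Let $\tilde\varepsilon:F(S)\to\{\pm1\}$ be the homomorphism sending each $s\in S$ to $-1$. For a word $r=s_1\cdots s_k\in R$ we get $\tilde\varepsilon(r)=(-1)^k=1$, so $R\subseteq\ker\tilde\varepsilon$. The kernel is a normal subgroup, hence it also contains $N$. Therefore $\tilde\varepsilon$ factors through $W$, and this gives the sign character $\varepsilon$.

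\textbf{A sign character forces relations of even length.} Suppose $\varepsilon$ is a sign character and take $r=s_1\cdots s_k\in R$. The relation says that $s_1\cdots s_k=\id$ in $W$. Applying $\varepsilon$ gives $1=\varepsilon(\id)=\prod_i\varepsilon(s_i)=(-1)^k$, so $k$ is even.

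\textbf{Convention on the involution relations.} The relations $s^2$ have length $2$, so they cause no problem in either direction. If $R$ is instead meant to be read together with the implicit relations $s^2=\id$, the argument is unchanged. There is no real obstacle here. The only point needing care is to read each relation as a word that equals $\id$ in $W$. If a relation is written as an equation $u=v$, it should be read as the word $uv^{-1}$, whose length has the same parity as $|u|+|v|$.
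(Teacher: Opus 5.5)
Your proof is correct and follows essentially the same route as the paper. The forward direction applies the sign character to a relator to get $(-1)^k=1$, and the converse uses the universal property of the free group to build a homomorphism to $\{\pm1\}$ sending each $s$ to $-1$, whose kernel contains $R$ and hence the normal closure of $R$, so it factors through $W$; your extra remarks (surjectivity from $S\neq\emptyset$, reading $u=v$ as $uv^{-1}$) are harmless refinements the paper leaves implicit.
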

\begin{proof} Assume that $(W,S)$ is an \EIS with sign character $\varepsilon$. Let $w=s_1\dots s_k\in R$, then when considering $w$ as an element of $W$, we have $\varepsilon(w)=(-1)^k=\varepsilon(\id)=1$ by Proposition~\ref{prop:Sign}. Hence $k$ is even. Conversely, consider the function $f:S\to \{\pm 1\}$ defined by $f(s)=-1$. Consider the free group $\mathbb F_S$ over $S$ and recall that $S^*$ is a submonoid of $\mathbb F_S$. By  the universal property of free groups, there is a unique epimorphism $\psi:\mathbb F_S \to \{\pm 1\}$ such that $\psi(s)=-1$ for all $s\in S$. By assumption $R\subset\ker\psi$. So there is a unique epimorphism $\varepsilon:W\to \{\pm 1\}$ such that $\varepsilon(s)=-1$ for all $s\in S$, which is a sign character by definition. 
\end{proof}

%%%%
\subsection{Involution systems and Coxeter systems}\label{ss:Coxeter}

We recall here the notion of Coxeter graphs and Coxeter systems. 

A {\em Coxeter graph} (or {\em Coxeter diagram}) is  an edge-labeled graph $\Gamma$   whose edges are labeled by elements of the set $\mathbb N_{\geq 3}\sqcup\{\infty\}=\{3,\dots, \infty\}$. By convention, the label $3$ is omitted on the diagrammatic representation of a Coxeter graph, so all edges without label are in fact edges labeled by $3$. For instance, the edges $\{1,2\}$ and $\{3,4\}$ have label $3$ in the following Coxeter graph: 
\begin{center}
$\Gamma:$ \hskip 1cm
\begin{tikzpicture}
    [scale=1,
     sommet/.style={inner sep=2pt,circle,draw=black,fill=blue,thick,anchor=base},
     rotate=0,
     baseline = 0]
    \tikzstyle{every node}=[font=\small]
    % Anchor point
    \coordinate (ancre) at (0,-0.0);
    
    % Nodes
    \node  at ($(ancre)+(0,0.6)$) {};
    \node[sommet] (a1) at ($(ancre)+(1,0.5)$) {};
    \node[above=2pt,color=blue] at (a1) {$3$};

    \node[sommet] (a3) at ($(ancre)+(2,0)$) {} edge[thick] node[above,pos=0.35] {$\infty$} (a1);
    \node[below=2pt,color=blue] at (a3) {$4$};

    \node[sommet] (a2) at ($(ancre)+(1,-0.5)$) {} 
    edge[thick] node[auto,swap,right] {} (a1) 
    edge[thick] node[auto,swap,below,pos=0.45] {$\infty$} (a3);
   \node[below=2pt,color=blue] at (a2) {$2$};
    
    \node[sommet] (a4) at (ancre) {} 
    edge[thick] node[auto,swap,right] {} (a1) 
    edge[thick] node[auto,swap,above,pos=0.35] {} (a2);
    \node[below=2pt,color=blue] at (a4) {$1$};
\end{tikzpicture}.
\end{center}

Let $S$ be a set. A {\em Coxeter matrix over $S$}  is a function  
$
m:S\times S \to \mathbb N^*\sqcup\{\infty\}=\{1,2,3,\dots, \infty\}
$  
satisfying the following conditions:
\begin{enumerate}[\itshape (i)]
\item $m(s,s)=1$ if and only if $s\in S$;
\item $m(s,t)=m(t,s)$ for all $s,t\in S$ (a Coxeter matrix is symmetric).
\end{enumerate}

A Coxeter graph $\Gamma$ with set of vertices $S$ is naturally associated to a Coxeter matrix $m_\Gamma$ over $S$. For $s,t\in S$, take the coefficient $m_\Gamma(s,t)$ to be equal to the label of the edge $\{s,t\}$ in $\Gamma$, and  if $\{s,t\}$ is not an edge of $\Gamma$, set  $m_\Gamma(s,s)=1$ and  $m(s,t)=2$ for $s\not = t$. For instance, for the above Coxeter graph $\Gamma$, we obtain the following Coxeter matrix:
$$
m_\Gamma=\begin{pmatrix}
1&3&3&2\\
3&1&3&\infty\\
3&3&1&\infty\\
2&\infty&\infty&1
\end{pmatrix}.
$$

\begin{defi}\label{def:CoxeterGraphIS} The {\em Coxeter type of an involution system} $(W,S)$ is the Coxeter graph with set of vertices~$S$ given by the Coxeter matrix over $S$ defined by $m(s,t)=\ord_W(st)$, the order of $st$ in $W$. 
\end{defi}

Recall that a {\em Coxeter system} is a pair $(W,S)$ where  $W$ is a group generated by a subset $S$ satisfying the following presentation: 
 $$
 W=\mpair{S\mid (st)^{m(s,t)}=e \text{ for }s,t\in S\text{ with }m(s,t)<\infty },
 $$
 where $m:S\times S \to \mathbb N^*\sqcup\{\infty\}$ is a Coxeter matrix over $S$.  
 It turns out that $\ord_W(s)=2$, so $S$ is a set of involutions, and $m(s,t)=\ord_W(st)$. Therefore $(W,S)$ is an \EIS and is uniquely determined (up to isomorphism) by its Coxeter graph $\Gamma$, which is called {\em the type of the Coxeter system $(W,S)$}.

\begin{rem} Contrary to the case of Coxeter systems, the Coxeter type does not determine (up to isomorphism of edge-labeled graphs) a unique involution system; see \S\ref{ss:Alt5} for an example. 
\end{rem}

\smallskip
Let $S$ be a set of cardinality $n\in\mathbb N^*$. The {\em universal Coxeter system of rank $n$} is the Coxeter system $(\mathcal U_n,S)$ of type the complete graph with vertices $S$ and all edges labeled by $\infty$. In other words, 
$
\mathcal U_n=\mpair{S\mid s^2=e}.
$
For instance, $\mathcal U_1=\mathbb Z_2$, $\mathcal U_2$ is the infinite dihedral group and for $n=3,4$ we have the following Coxeter graphs:
\smallskip

\begin{center}
      \begin{tikzpicture}[sommet/.style={inner sep=2pt,circle,draw=blue!75!black,fill=blue!40,thick}]
	\node[sommet,label=below:] (beta) at (-6,-1.25) {};
	\node[sommet,label=below:] (tau)  at (-5,-1.25){}  edge[thick] node[auto,scale=0.7,swap] {$\infty$} (beta); 
	\node[sommet,label=left:] (gamma) at (-5.5,-2) {} edge[thick] node[auto,swap,scale=0.7,below right] {$\infty$} (tau);
	\node[sommet,label=left:] (gamma) at (-5.5,-2) {} edge[thick] node[auto,swap,scale=0.7,below left] {$\infty$} (beta);
	\node[black] at (-7,-1.55) {$(\U_3)$};
	\node[sommet,label=below:] (a1) at (0,-1.25) {};
	\node[sommet,label=below:] (a2)  at (1,-1.25){}  edge[thick] node[auto,swap,scale=0.7] {$\infty$} (a1); 
	\node[sommet,label=left:] (a3) at (0,-2) {} edge[thick] node[auto,swap,scale=0.7,left] {$\infty$} (a1);
	\node[sommet,label=left:] (a4) at (1,-2) {} edge[thick] node[auto,swap,scale=0.7,below ] {$\infty$} (a3);
	\node[sommet,label=left:] (a4) at (1,-2) {} edge[thick] node[auto,swap,scale=0.7,right ] {$\infty$} (a2);
	\node[sommet,label=left:] (a3) at (0,-2) {} edge[thick] node[auto,swap,scale=0.7,right] {$\infty$} (a2);
	\node[sommet,label=below:] (a1)  at (0,-1.25){}  edge[thick] node[auto,swap,scale=0.7,left] {$\infty$} (a4); 
	\node[black] at (-1.2,-1.55) {$(\U_4)$};	
	\end{tikzpicture}
\end{center}

 The following proposition follows easily from classical facts in group theory regarding free groups and group presentations. 
 
\begin{prop}[Universal property of involution systems]
\label{prop:Universal} 
Let $(W,S)$ be an involution system of rank $n=|S|$ with Coxeter graph $\Gamma$. Consider the universal Coxeter system $(\mathcal U_n,S)$ and the Coxeter system $(W_\Gamma,S)$ of type $\Gamma$. Then there are unique group epimorphisms  $f:\U_n\to W$, $\varphi:W_\Gamma\to W$ and $\pi:\mathcal U_n\to W_\Gamma$ satisfying   $S=\varphi(S)=f(S)=\pi(S)$ such that the following diagram is commutative:
\[
\begin{tikzpicture}[>=latex]

\node (U) at (0,0) {$\mathcal U_n$};
\node (WG) at (0,-2) {$W_\Gamma$};
\node (W) at (3,0) {$W$};

% arrows
\draw[->>] (U) -- node[left] {$\pi$} (WG);
\draw[->>] (U) -- node[above] {$f$} (W);
\draw[->>] (WG) -- node[sloped,above] {$\varphi$} (W);

\end{tikzpicture}
\]
Moreover, $(W,S)$ is a Coxeter system if and only if $\varphi$ is an isomorphism.
\end{prop}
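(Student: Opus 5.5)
The plan is to use the universal property of group presentations, i.e. von Dyck's theorem, for all three maps. The universal Coxeter group has the presentation $\U_n=\mpair{S\mid s^2=\id}$, and $W_\Gamma=\mpair{S\mid (st)^{m_\Gamma(s,t)}=\id}$. Here $m_\Gamma(s,t)=\ord_W(st)$ by Definition~\ref{def:CoxeterGraphIS}, so the relations $s^2=\id$ are included (take $s=t$, where $m=1$).

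First I construct $f$. Every $s\in S$ is an involution in $W$, so the map $S\to W$ given by the inclusion kills every relator $s^2$ of $\U_n$. It therefore extends uniquely to a homomorphism $f:\U_n\to W$ with $f(s)=s$. This map is onto because $S$ generates $W$.

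Next I construct $\pi$. Each relator of $\U_n$ is also a relator of $W_\Gamma$, since $m_\Gamma(s,s)=1$. So $S\to W_\Gamma$ extends uniquely to $\pi$, which is onto because $S$ generates $W_\Gamma$.

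Then I construct $\varphi$. For $s,t\in S$ with $m_\Gamma(s,t)<\infty$, we have $(st)^{m_\Gamma(s,t)}=(st)^{\ord_W(st)}=\id$ in $W$. Hence every defining relator of $W_\Gamma$ holds in $W$, and the identity on $S$ extends uniquely to a surjection $\varphi$.

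Commutativity and uniqueness both follow from one observation. The homomorphisms $\varphi\circ\pi$ and $f$ agree on the generating set $S$ of $\U_n$, so they are equal. The same reasoning gives uniqueness of each map: a homomorphism is determined by its values on a generating set.

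For the last claim, one direction is immediate: if $\varphi$ is an isomorphism, then $W\cong W_\Gamma$ via a map fixing $S$, so $(W,S)$ is a Coxeter system.

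The converse is the only step with any subtlety. Suppose $(W,S)$ is a Coxeter system with some Coxeter matrix $m$, so $W=\mpair{S\mid (st)^{m(s,t)}}$. I need that $m=m_\Gamma$, and I use the fact recalled in \S\ref{ss:Coxeter} that in a Coxeter system $m(s,t)=\ord_W(st)$. This gives $m(s,t)=\ord_W(st)=m_\Gamma(s,t)$. So $W$ and $W_\Gamma$ have the same presentation on $S$. The map $S\to W$ then extends to a homomorphism $\psi:W_\Gamma\to W$, and the map $S\to W_\Gamma$ extends to a homomorphism $W\to W_\Gamma$. Both composites fix $S$, so they are identities. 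By uniqueness $\psi=\varphi$, and therefore $\varphi$ is an isomorphism.
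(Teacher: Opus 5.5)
Your proof is correct and follows the route the paper intends: the paper gives no written argument, only remarking that the statement follows from classical facts about free groups and presentations, which is exactly the von Dyck extension argument you carry out, together with the recalled fact that $m(s,t)=\ord_W(st)$ in a Coxeter system for the converse. You read the condition $S=\varphi(S)=f(S)=\pi(S)$ pointwise, and that is the right interpretation, since uniqueness would fail if these maps were only required to preserve $S$ as a set.
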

 
The following corollary follows directly from Proposition~\ref{prop:EIS-relations} and Proposition~\ref{prop:Universal}.

\begin{cor}
\label{cor:EIS-relations} 
Let $(W,S)$ be an involution system and $R_0\subseteq \mathcal U_S$ a set of relations in $\U_S$, i.e., $W=\mpair{S\mid \{s^2\mid s\in S\}\cup R_0}$. Then~$(W,S)$ is an \EIS if and only if $\ell_{(\mathcal U_S,S)}(w)$ is even for all $w\in R_0$. 
\end{cor}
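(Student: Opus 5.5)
The plan is to reduce Corollary~\ref{cor:EIS-relations} to Proposition~\ref{prop:EIS-relations} by rewriting each relation in $R_0$ as a word in $S^*$. Proposition~\ref{prop:Universal} will be used to read $R_0$ as a set of elements of $\U_S=\mpair{S\mid s^2=\id}$, pulled back along the epimorphism $f$. Each element $w\in R_0\subseteq\U_S$ has a reduced word in the universal Coxeter system; since $\U_S$ is a free product of copies of $\Z_2$, this reduced word is the unique word with no two equal consecutive letters, and its length is $\ell_{(\U_S,S)}(w)$.

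Let $R_0'\subseteq S^*$ be the set of chosen reduced words for elements of $R_0$. Then
$$R=\{s^2\mid s\in S\}\cup R_0'$$
is a set of relations in $S^*$ with $W=\mpair{S\mid R}$. This holds because both presentations describe $W$ as the quotient of the free group $\mathbb F_S$ by the normal closure of the same elements, once the relations $s^2$ are imposed.

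Next, Proposition~\ref{prop:EIS-relations} applies to $R$. It says that $(W,S)$ is an \EIS if and only if every word in $R$ has an even number of letters. The words $s^2$ always have length $2$, so the condition reduces to every word in $R_0'$ having even length, that is, $\ell_{(\U_S,S)}(w)$ being even for all $w\in R_0$.

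The only subtle point is that the statement should not depend on which word represents $w$ in $\U_S$. This follows because any two words for the same element of $\U_S$ differ by insertions and deletions of $s^2$. Each such move changes the length by $2$, so parity is preserved. Equivalently, $\U_S$ is itself an \EIS (it is a Coxeter system), so by Proposition~\ref{prop:Sign} the parity of any word for $w$ equals the parity of $\ell_{(\U_S,S)}(w)$. With this, the argument is complete, and I expect no genuine obstacle beyond this well-definedness check.
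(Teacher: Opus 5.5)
Your proposal is correct and matches the paper, which simply notes that the corollary follows directly from Proposition~\ref{prop:EIS-relations} and Proposition~\ref{prop:Universal}. You make explicit the step the paper leaves implicit: replacing each $w\in R_0$ by its reduced word in $S^*$ so that Proposition~\ref{prop:EIS-relations} applies, with the relations $s^2$ contributing only even lengths. Your parity well-definedness remark is harmless, but it is not needed once the reduced words are fixed.
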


%%%%%%
%%%%%%
\section{Weak order on involution systems}\label{se:WeakOrder}
Let $(W,S)$ be an involution system.  Recall that the {\em (right) Cayley graph\footnote{We consider Cayley graphs of group generated by involutions, so instead of considering an oriented graph having edges that are loop labeled by the same generator, we consider an undirected graph with edges labeled by the generators}  of $(W,S)$} is the edge-labeled graph $\Cay(W,S)$ whose vertices are the elements of $W$ and the edges are $\{w,ws\}$ with label $s$, where $w\in W$ and $s\in S$. The Cayley graph has a metric defined by:
$$
d_{(W,S)}(u,v)=\ell_{(W,S)}(u^{-1}v).
$$
As with the length, we denote $d:=d_{(W,S)}$ if there is no possible confusion. Moreover,  $W$ acts by isometries on $\Cay(W,S)$ by left-multiplication. It is also known that for any $w\in W$ the set of reduced words $\Red_w(W,S)$ is in bijection with the set of geodesics in $\Cay(W,S)$ from $\id$ to $w$; see for instance~\cite{Ep+92} for more details.

%%%%%%
\subsection{The weak order} 
\label{ss:WeakOrder}

We define the relation $\leq_R$ on $W$ by:
$$
u\leq_R w \iff u \text{ is a prefix of }w \iff u \textrm{ lies on  a geodesic from $\id$ to $w$}. 
$$

The following proposition follows easily from Proposition~\ref{prop:Length} and the definitions. 

\begin{prop} The relation $\leq_R$ is a partial order on $W$ with rank function $\ell:W\to \mathbb N^*$.
\end{prop}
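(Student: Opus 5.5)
We verify the three axioms of a partial order for $\leq_R$ directly from the definition $u\leq_R w$ if and only if $\ell(w)=\ell(u)+\ell(u^{-1}w)$. This is equivalent to $u$ being a prefix of $w$: if $w=uv$ is a reduced product then $v=u^{-1}w$; conversely, if a reduced word for $u$ is a prefix of a reduced word for $w$, the remaining suffix is a word for $u^{-1}w$ of length $\ell(w)-\ell(u)$. By Proposition~\ref{prop:Length}~(3), this length cannot be shortened, since $\ell(w)\le \ell(u)+\ell(u^{-1}w)$.

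\emph{Reflexivity.} Take $u=w$: then $u^{-1}w=\id$ has length $0$.

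\emph{Antisymmetry.} If $u\leq_R w$ and $w\leq_R u$, then $\ell(u)\le\ell(w)\le\ell(u)$, so $\ell(u^{-1}w)=\ell(w)-\ell(u)=0$. Hence $u=w$.

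\emph{Transitivity.} Suppose $u\leq_R v\leq_R w$. Then
$$
\ell(w)=\ell(v)+\ell(v^{-1}w)=\ell(u)+\ell(u^{-1}v)+\ell(v^{-1}w)\ge \ell(u)+\ell(u^{-1}w)\ge \ell(w),
$$
using Proposition~\ref{prop:Length}~(3) twice. Equality therefore holds throughout, so $u\leq_R w$. Equivalently, concatenating reduced words for $u$, $u^{-1}v$ and $v^{-1}w$ gives a reduced word for $w$ with a reduced word for $u$ as a prefix.

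\emph{Rank function.} Clearly $\ell(\id)=0$, and $\id$ is the minimum, since $\ell(w)=0+\ell(w)$. Also $u<_R w$ implies $\ell(u)<\ell(w)$. Now let $w$ cover $u$, and write $u^{-1}w=s_1\cdots s_k$ as a reduced word with $k\ge1$. Transitivity gives $u\leq_R us_1\leq_R w$, because $\ell(us_1)=\ell(u)+1$ by the prefix property of reduced words (Proposition~\ref{prop:Length}~(6)). Since $w$ covers $u$ and $us_1\neq u$, we must have $us_1=w$, so $k=1$ and $\ell(w)=\ell(u)+1$. Conversely, if $\ell(w)=\ell(u)+1$ and $u\leq_R w$, nothing lies strictly between them. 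So covers increase $\ell$ by exactly one, and $\ell$ is a rank function.

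There is no genuine obstacle here. The only point requiring care is the equivalence between the prefix definition and the length identity $\ell(w)=\ell(u)+\ell(u^{-1}w)$, which rests on the triangle inequality in Proposition~\ref{prop:Length}.
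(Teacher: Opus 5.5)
Your proof is correct and takes the route the paper intends. The paper writes no argument beyond saying the statement ``follows easily from Proposition~\ref{prop:Length} and the definitions,'' and your verification through the identity $\ell(w)=\ell(u)+\ell(u^{-1}w)$, the triangle inequality (3) and the prefix property (6) supplies exactly those details. Two small points: in the covering step it is the prefix property with the triangle inequality, not transitivity, that gives $u\leq_R us_1\leq_R w$; and since $\ell(\id)=0$, the rank function really takes values in $\mathbb N$ rather than $\mathbb N^*$ as the statement writes.
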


\begin{defi}[Weak order] The graded poset $(W,\leq_R)$ is called the {\em right weak order} of $(W,S)$. Similarly, the {\em left weak order} $(W,\leq_L)$ is defined by $u\leq_L w$ if and only if $u$ is a suffix of $w$ (with the left Cayley graph). 
\end{defi}

The weak order on Coxeter systems is well-studied, see for instance~\cite[Chapter~3]{BjBr05}.

\begin{ex}[The absolute/reflection order on a Coxeter system is a weak order]
\label{ex:Absolute}
Let $(W,S)$ be a Coxeter system and $T=\bigcup_{w\in W}wSw^{-1}$ its set of reflections. We consider the \EIS $(W,T)$, also called the {\em dual presentation} of the Coxeter group $W$. The weak order associated to $(W,T)$ is called the {\em absolute order}.  The length function $\ell_{(W,T)}$ is called the {\em absolute length} or {\em reflection length} and was first studied by Carter, then Brady-Watts and Bessis studied it in relation to the {\em dual braid monoid}, see \cite{Be03,Arm09} for more details. 

In~\cite[\S4]{CaDySe23}, the authors defined {\em intermediate absolute orders} in relation to a family of subsets $T_k$ of $T$ ($k\in\mathbb N$) containing $S$: for each $k$, the $k$-absolute order is the weak order for the \EIS $(W,T_k)$. 
\end{ex}

More generally,  since the action of $W$ on $\Cay(W,S)$ by left-multiplication is by isometries, we define\footnote{The definition of the \emph{(right) weak order with respect to} $\omega$ is valid for any group $G$  with finite generating set $S$.} the {\em right weak order $(W, \leq_R^\omega)$ with respect to $\omega\in W$} by:
\begin{equation}\label{eq:Weak}
 g \leq_R^{\omega} h\iff\textrm{ $g$ lies on a geodesic in $\Cay(W,S)$ from $\omega$ to $h$.}
 \end{equation}
 In particular, the posets $(W, \leq_R^{\omega})$ and $(W, \leq_R^{\omega'})$ are isomorphic for any $\omega,\omega' \in W$. In this article, we only consider the right weak order so we simply say the {\em weak order} of $(W,S)$.

\smallskip 
The {\em semi-oriented Cayley graph $\oCay(W,S)$ of $(W,S)$} is the mixed graph obtained from $\Cay(W,S)$ by removing the label of the edges and orienting, if possible, $w$ to $ws$ if $\ell(ws)>\ell(w)$ (or equivalently if $\ell(ws)=\ell(w)+ 1$). See Figure~\ref{fig:A5} for an example. 

\begin{prop}\label{prop:EIS} Let $(W,S)$ be an involution system.
\begin{enumerate}
\item The Hasse diagram\footnote{We consider the Hasse diagram to be a directed graph with vertices $W$ and directed edges $(u,us)$ if $us$ cover $u$ in weak order.} of $(W,\leq_R)$ is obtained from $\oCay(W,S)$ by removing the undirected edges.
\item The Hasse diagram of $(W,\leq_R)$ is $\oCay(W,S)$ if and only if $(W,S)$ is an \EIS (even-involution system).
\end{enumerate}
\end{prop}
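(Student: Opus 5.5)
For part (1), the plan is to identify the cover relations of $(W,\leq_R)$ exactly. First I check that the partial order is graded by $\ell$. If $u\leq_R w$, then a reduced word for $u$ is a prefix of a reduced word $s_1\cdots s_k$ for $w$. By Proposition~\ref{prop:Length}~(6) every prefix $s_1\cdots s_j$ is reduced. So between $u$ and $w$ there is a saturated chain $u=s_1\cdots s_i <_R s_1\cdots s_{i+1}<_R\cdots<_R w$, each step raising the length by exactly one. Consequently $w$ covers $u$ if and only if $u\leq_R w$ and $\ell(w)=\ell(u)+1$.

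Next, suppose $\ell(w)=\ell(u)+1$ and $u\leq_R w$. Then a reduced word of $w$ is the reduced word of $u$ followed by a single letter $s$, so $w=us$ with $\ell(us)=\ell(u)+1$. Conversely, if $\ell(us)=\ell(u)+1$, then appending $s$ to any reduced word for $u$ gives a word of length $\ell(us)$ representing $us$. That word is reduced, so $u\leq_R us$ is a cover. The directed edges $(u,us)$ of the Hasse diagram are therefore exactly the edges of $\Cay(W,S)$ that were oriented in $\oCay(W,S)$ (orientation from $u$ to $us$ when $\ell(us)>\ell(u)$).

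The remaining edges of $\Cay(W,S)$, those $\{w,ws\}$ with $\ell(ws)=\ell(w)$, stay undirected in $\oCay(W,S)$. By Proposition~\ref{prop:Length}~(5) these are the only other possibility. They correspond to no cover relation, because covers raise the length by one. Removing them yields the Hasse diagram, which proves (1).

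For part (2), by (1) the Hasse diagram equals $\oCay(W,S)$ exactly when $\oCay(W,S)$ has no undirected edges. That happens exactly when $\ell(ws)\neq\ell(w)$ for all $w\in W$ and $s\in S$. By Proposition~\ref{prop:Length}~(5) this is equivalent to $\ell(ws)=\ell(w)\pm1$ for all $w,s$. By Proposition~\ref{prop:Sign}~(3) that is equivalent to $(W,S)$ being an \EIS.

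No step is a real obstacle. The only point needing care is the graded/cover argument: showing that a cover must have length difference exactly one. This relies on prefixes of reduced words being reduced, and it is what guarantees that no cover relation skips levels.
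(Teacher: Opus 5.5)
Your proof is correct and follows the same route as the paper. For (1), you spell out what the paper dismisses as ``follows from definitions'': covers in $(W,\leq_R)$ are exactly the pairs where $us$ covers $u$ with $\ell(us)=\ell(u)+1$, so edges with $\ell(ws)=\ell(w)$ carry no cover relation. For (2), you reduce, as the paper does, to the absence of undirected edges and invoke Proposition~\ref{prop:Sign}.
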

\begin{proof}  (1) follows from definitions and the fact that an edge is not directed if and only if $\ell(ws)=\ell(w)$. (2) If $(W,S)$ is an \EIS, then $\oCay(W,S)$ is a directed graph by Proposition~\ref{prop:Sign}. We conclude by (1). Conversely, if $\oCay(W,S)$ is a directed graph, then for any edge $(w,ws)$ we have by definition that $\ell(ws)=\ell(w)+1$, we conclude again by Proposition~\ref{prop:Sign}.
\end{proof}

Finally, the descent sets (Eq.~\eqref{eq:Descents}) of an involution system have a description in term of the weak order.

\begin{prop}
\label{prop:Descents}
 Let $(W,S)$ be an involution system, then $D_L=\{s\in S\mid s\leq_R w\}$ and $D_R=\{s\in S\mid s\leq_L w\}$.
 \end{prop}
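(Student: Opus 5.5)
Both equalities come from unwinding the definitions, with the left and right cases mirroring each other. Note that $u\leq_R w$ means $u$ is a prefix of $w$, while $u\leq_L w$ means $u$ is a suffix of $w$. So for $s\in S$, the statement $s\leq_R w$ says that some reduced word for $w$ starts with the letter $s$. Here we use Proposition~\ref{prop:Length}~(1): $\ell(s)=1$, and the only reduced word for $s$ is the one-letter word $s$. The argument therefore reduces to the following claim: $\ell(sw)<\ell(w)$ if and only if $w$ has a reduced word beginning with $s$.

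First suppose $w=s s_2\cdots s_k$ is a reduced word, so $k=\ell(w)$. Then $sw=s_2\cdots s_k$ in $W$, which gives $\ell(sw)\le k-1<\ell(w)$, and hence $s\in D_L(w)$.

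Conversely, suppose $\ell(sw)<\ell(w)$. By Proposition~\ref{prop:Length}~(5) we get $\ell(sw)=\ell(w)-1$. Take a reduced word $s_2\cdots s_k$ for $sw$, so $k-1=\ell(w)-1$. Then $w=s\,(sw)=s s_2\cdots s_k$ is a word of length $\ell(w)$, so it is reduced, and its prefix $s$ gives $s\leq_R w$. This proves $D_L(w)=\{s\in S\mid s\leq_R w\}$.

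The equality $D_R(w)=\{s\in S\mid s\leq_L w\}$ follows by the mirror argument using $ws$ and suffixes. Alternatively, apply the left case to $w^{-1}$ together with Proposition~\ref{prop:Length}~(2): a word is reduced for $w$ exactly when its reversal is reduced for $w^{-1}$, and $\ell(ws)=\ell(sw^{-1})$. There is no real obstacle. The only point to be careful about is that no sign character is assumed, so $\ell(sw)=\ell(w)$ can occur. This is harmless, because the definition of $D_L$ uses a strict inequality, and Proposition~\ref{prop:Length}~(5) then forces a drop of exactly one.
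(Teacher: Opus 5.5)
Your proof is correct and matches the paper's argument. The inclusion $\{s\in S\mid s\leq_R w\}\subseteq D_L(w)$ is immediate, and conversely, prepending $s$ to a reduced word for $sw$ gives a word of length $\ell(w)$, hence reduced, with the $D_R$ case following by symmetry.
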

 \begin{proof} Let $w\in W$. By definition of $\leq_L$, it is enough to show the equality for $D_L(w)$. Let $w\in W$, the inclusion $\{s\in S\mid s\leq_R w\}\subseteq D_L(w)$ follows from definition. Let $s\in S$ such that $\ell(sw)<\ell(w)$. Take $sw=s_1\cdots s_k$ a reduced word. Then $w=ss_1\cdots s_k$ is reduced since $k=\ell(sw)<\ell(w)=k+1$. So $s\leq_R w$.  
 \end{proof}

%%%%%%%%%%%%%%
\subsection{The weak order, lattice properties, \MIS and \EMIS}  In the case of Coxeter systems, Bj\"orner~\cite{Bj84,BjBr05}  showed that the weak order is a complete meet-semilattice.  The aim of this article is to study the question of which involution systems satisfy the property that $(W,\leq_R)$ is a complete meet-semilattice. First, we recall the notion of meet and join in $(W,\leq_R)$.  

Let $(W,S)$ be an involution system.  Given $\omega \in W$, we say that a non empty subset  $X$ of $W$ has a \emph{meet with respect to} $\omega$ if $X$ has a greatest lower bound $\bigwedge_R^{\omega} X$ with respect to $\leq_R^\omega$. When $\omega=\id$, we write $\bigwedge_R X$ and call this element the \emph{meet} of $X$. Since $(W, \leq_R^{\omega})$ and $(W, \leq_R)$ are isomorphic, we know that $\bigwedge_R^{\omega} X$ exists if and only if $\bigwedge_R X$ exists. Furthermore, given elements $x,y \in W$, we can explicitly write $x \wedge_R^{\omega} y = \omega(\omega^{-1}x \wedge_R \omega^{-1}y)$. 

The weak order of $(W,S)$ is a {\em complete meet-semilattice} if any non empty subset  $X$ of $W$ has a meet $\bigwedge_R X$. 

We say that a non empty subset $X$ is {\em bounded in $(W,\leq_R)$} if there is $\omega\in W$ such that $x\leq_R \omega$ for all $x\in X$. If the weak order is a complete meet-semilattice, then any bounded subset $X$ has a least upper bound $\bigvee_R X$ called the {\em join of $X$}:
$$
\bigvee_R X=\bigwedge_R\{u\in W\mid x\leq_R u,\,\forall x \in X\}.
$$

\smallskip

 We introduce some definitions.

\begin{defi}\label{def:Main} Let $(W,S)$ be an involution system. We say that:
\begin{enumerate}
\item $(W,S)$ is a {\em meet involution system}, or \MIS for short, if $(W,\leq_R)$ is a complete meet-semilattice.
\item $(W,S)$ is an {\em even meet involution system}, or \EMIS for short, if $(W,S)$ is both an \EIS and a \MIS.
\end{enumerate}
\end{defi}

\begin{rem}\label{rem:Complete} Because the set of reduced words for an element $w\in W$ is finite (since $S$ is finite), the argument at the end of the proof of~\cite[Theorem~3.2.1]{BjBr05} applies: in order to show that the weak order of an involution system $(W,S)$ is a complete meet-semilattice, it is enough to show that the meet of any two elements $x,y\in W$ exists.  
\end{rem}

\begin{ex} Coxeter systems are of course \EMIS by Bj\"orner's theorem.  But the dual presentation $(W,T)$, see Example~\ref{ex:Absolute}, {\em is not an \EMIS in general}: for finite Coxeter groups, each Coxeter element covers all reflections, so the meet does not exist for two Coxeter elements. However, intervals in the absolute order of finite Coxeter group are lattices, see for instance~\cite{Be03,BrWa08}.
\end{ex}

\begin{ex}  Consider the involution system $(W,S)$ given by the following presentation:
$$
W=\mpair{a,b,c\mid a^2=b^2=c^2=abc=\id}.
$$
Then $(W,S)$ is a \MIS but not an \EMIS and is not a Coxeter system: the Coxeter type $\Gamma$ and the Cayley graph are depicted below: 
\[
\begin{tikzpicture}[baseline=(mid.center),
  sommet/.style={inner sep=2pt,circle,draw=blue!75!black,fill=blue!40,thick}]
  \node[label=center:$(\Gamma)$]  at (-0.8,0) {};
  \node[sommet] (alpha) at (0,0) {};
  \node[sommet] (mid)   at (0.6,0) {};
  \node[sommet] (gamma) at (1.2,0) {};
\end{tikzpicture}
\qquad\quad\qquad
\begin{tikzpicture}[
    baseline=(c.center),
    scale=0.5,
    transform shape,
    vertex/.style={
        circle,
        draw,
        fill=blue!20,
        inner sep=1.6pt
    },
    elabel/.style={
        font=\small,
        fill=white,
        inner sep=1pt
    },
    blackedge/.style={
        draw=black,
        line width=0.4pt,
    },
    rededge/.style={
        draw=red,
        line width=0.4pt
    }
]

\node[vertex] (e) at (0,0) {$\id$};
\node[vertex] (a) at (-2,4) {$a$};
\node[vertex] (b) at ( 2,4) {$b$};
\node[vertex] (c) at ( 0,2.6) {$c$};

\draw[blackedge] (e) -- node[elabel, below left]  {$a$} (a);
\draw[blackedge] (e) -- node[elabel, below right] {$b$} (b);
\draw[blackedge] (e) -- node[elabel, right=0.1cm] {$c$} (c);

\draw[rededge] (a) -- node[elabel, above] {$c$} (b);
\draw[rededge] (a) -- node[elabel, below left]  {$b$} (c);
\draw[rededge] (b) -- node[elabel, below right] {$a$} (c);

\end{tikzpicture}
\]
The Hasse diagram of $(W,\leq_R)$ is obtained from $\Cay(W,S)$ by removing the edges not containing $\id$ (the red edges). 
\end{ex}

\begin{ex}\label{ex:EMIS-A2} The following involution system $(W,S)$, where
$$
W=\mpair{s_1,s_2,s_3\mid s_1^2=s_2^2=s_3^2=(s_1s_2s_3)^2=\id}
$$
is an \EMIS that is not a Coxeter system. 

Indeed, since $s_i^2$ and $(s_1s_2s_3)^2$, viewed as elements of $\U_S$, are of even length,  $(W,S)$ is an \EIS by Corollary~\ref{cor:EIS-relations}.  Moreover, $W$ acts on the left freely and transitively on the Honeycomb tiling (as depicted in Figure~\ref{fig:EMIS-A2}) so $\Cay(W,S)$ is the Honeycomb tiling. 
%The action is defined as follows. There are three equivalent classes of parallels for the edges of the hexagons in the Honeycomb tiling. Label each edge in the same equivalent class by the same generator, then for $i=1,2,3$, the generator $s_i$ exchanges the two vertices of all edges labeled by $s_i$ \marginpar{FABRICIO: Is this a right action? And what does it do to other types of edges}. 
The relation $(s_1s_2s_3)^2=\id$ is given by one of the hexagons containing $\id$. Observe that if $i\not = j$, the order $\ord_W(s_is_j)=\infty$. Therefore, the Coxeter graph of $(W,S)$ is of Coxeter type $\U_3$, but $W$ is a strict quotient of $\U_S$ and hence is not a Coxeter system by Proposition~\ref{prop:Universal}.

\begin{center}
\begin{figure}
\begin{tikzpicture}[scale=2]
  \def\s{0.5}
  \newcommand{\drawhexagon}[2]{%
    \foreach \k in {0,...,5} {%
      \coordinate (V\k) at ({#1+\s*cos(60*\k)},{#2+\s*sin(60*\k)});%
    }%
    \foreach \k in {0,...,5} {%
      \pgfmathtruncatemacro{\next}{mod(\k+1,6)}%
      \ifnum\k=0 \def\edgecolor{red}\fi%
      \ifnum\k=1 \def\edgecolor{blue}\fi%
      \ifnum\k=2 \def\edgecolor{green!60!black}\fi%
      \ifnum\k=3 \def\edgecolor{red}\fi%
      \ifnum\k=4 \def\edgecolor{blue}\fi%
      \ifnum\k=5 \def\edgecolor{green!60!black}\fi%
      \draw[thick, \edgecolor] (V\k) -- (V\next);%
    }%
  }

  % Parameters for the tiling: number of columns and rows
  \def\ncols{6} % 7 columns total
  \def\nrows{2}

  \pgfmathsetmacro{\vertSpacing}{\s*sqrt(3)}
  \pgfmathsetmacro{\horizSpacing}{1.5*\s}
  
  \foreach \col in {0,...,\ncols} {%
    \foreach \row in {0,...,\nrows} {%
      \pgfmathtruncatemacro{\modcol}{mod(\col,2)}
      \ifnum\modcol=1
         \pgfmathsetmacro{\yoffset}{\vertSpacing/2}
      \else
         \pgfmathsetmacro{\yoffset}{0}
      \fi
      \pgfmathsetmacro{\x}{\col*\horizSpacing}
      \pgfmathsetmacro{\y}{\row*\vertSpacing + \yoffset}
      \drawhexagon{\x}{\y}%
    }%
  }

 % labels
  \node at ({2.72*\horizSpacing},{0.87*\vertSpacing}) {\small $\id$};
  \node at ({2.53*\horizSpacing},{0.55*\vertSpacing}) {\small $s_1$};
  \node at ({2.5*\horizSpacing},{1.48*\vertSpacing}) {\small $s_2$};
  \node at ({3.29*\horizSpacing},{0.87*\vertSpacing}) {\small $s_3$};
  \node at ({2.42*\horizSpacing},{0*\vertSpacing}) {\small $s_1 s_2$};
  \node at ({3.7*\horizSpacing},{0*\vertSpacing}) {\small $s_1 s_2 s_3$};
 \node at ({3.4*\horizSpacing},{0.55*\vertSpacing}) {\small $s_3s_2$};
  
\end{tikzpicture}
\caption{The Cayley graph of the \EMIS of Example~\ref{ex:EMIS-A2}. The green edges are labeled by $s_1$, the red edges are labeled by $s_2$ and the blue edges are labeled by $s_3$. The Hasse diagram is obtained by orienting the Cayley graph by geodesic distance from $e$.}
\label{fig:EMIS-A2}
\end{figure}
\end{center}
\end{ex}

In the above example, $\oCay(W,S)$ is isomorphic to the oriented Cayley graph of the affine Coxeter system of type $\tilde A_2$ by Proposition~\ref{prop:EIS} since $(W,S)$ is an \EIS; in particular the weak order of $(W,S)$ is lattice isomorphic to the one in the Coxeter system of type $\tilde A_2$. In \S\ref{Rank 3 section}, we show more generally that the weak order of any \EMIS of rank $3$ is lattice isomorphic to the weak order of a Coxeter system.  

More examples of \EMIS are discussed in \S\ref{Median subsection} below, \S\ref{Rank 3 section} and \S\ref{ss:Rank4}. We also describe in \S\ref{ss:Alt5} an example of a \MIS that is not an \EMIS.

%%%%%%%%
%%%%%%%
\subsection{\EIS with median Cayley graphs and cactus groups are \EMIS} \label{Median subsection} A graph is called \emph{median} if for any 3 vertices $x_1,x_2,x_3$ there exists an unique vertex $m$ such that $d(x_i,x_j) = d(x_i,m)+d(m,x_j)$ for all $i\neq j$. The vertex $m$ is called the \emph{median point} of $x_1,x_2,x_3$. 

Median graphs coincide with the one-skeletons of CAT(0) cube complexes (see \cite{MedianCAT(0)1, MedianCAT(0)2, MedianCAT(0)3}). Groups generated by involutions having a median Cayley graph are studied and classified in \cite{RAMRG} under the name of \emph{right-angled mock reflection groups}. Coxeter systems do not have median Cayely graphs but for trivial cases.

\begin{prop} \label{Median}
    Let $(W,S)$ be an \EIS whose Cayley graph is median. Then $(W,S)$ is an \EMIS. 
\end{prop}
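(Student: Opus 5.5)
By Remark~\ref{rem:Complete}, it suffices to show that any two elements $x,y\in W$ have a meet in $(W,\leq_R)$. The natural candidate is the median point $m$ of the triple $\id,x,y$ in $\Cay(W,S)$. By definition, $m$ satisfies $d(\id,x)=d(\id,m)+d(m,x)$ and $d(\id,y)=d(\id,m)+d(m,y)$. So $m$ lies on a geodesic from $\id$ to $x$ and on a geodesic from $\id$ to $y$. By the description of $\leq_R$ via geodesics from $\id$, this gives $m\leq_R x$ and $m\leq_R y$, so $m$ is a common lower bound.

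Next I would show that $m$ is the greatest lower bound. Let $u\in W$ with $u\leq_R x$ and $u\leq_R y$; I want $u\leq_R m$, i.e.\ $d(\id,u)+d(u,m)=d(\id,m)$. Consider the median point $m'$ of the triple $u,x,y$. I would use the standard fact that in a median graph the interval $I(a,b)$ (the union of geodesics from $a$ to $b$) is convex, and that the median of $a,b,c$ is the unique point of $I(a,b)\cap I(b,c)\cap I(a,c)$. Since $u\in I(\id,x)\cap I(\id,y)$, we have $I(u,x)\subseteq I(\id,x)$ and $I(u,y)\subseteq I(\id,y)$, because geodesics from $u$ to $x$ concatenate with geodesics from $\id$ to $u$ into geodesics from $\id$ to $x$. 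Hence $m'\in I(\id,x)\cap I(\id,y)$, and $m'\in I(x,y)$ by definition, so uniqueness of the median forces $m'=m$. Therefore $m\in I(u,x)$, which together with $u\in I(\id,x)$ gives
\[
d(\id,x)=d(\id,u)+d(u,m)+d(m,x).
\]
Combined with $d(\id,x)=d(\id,m)+d(m,x)$, this yields $d(\id,m)=d(\id,u)+d(u,m)$. Thus $u$ lies on a geodesic from $\id$ to $m$, i.e.\ $u\leq_R m$. Note that the convexity claim is actually unnecessary: only the concatenation argument is used.

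Finally, the \EIS hypothesis gives the ``even'' part, so $(W,S)$ is an \EMIS. The main point to check carefully is the step $m'\in I(\id,x)$, which needs that geodesics $u\to m'$ inside $I(u,x)$ extend to geodesics from $\id$. This holds because $m'\in I(u,x)$ and $u\in I(\id,x)$ imply $d(\id,x)\le d(\id,u)+d(u,m')+d(m',x)=d(\id,u)+d(u,x)=d(\id,x)$, so equality holds throughout and then $d(\id,m')+d(m',x)=d(\id,x)$ by the triangle inequality. Everything else is formal.
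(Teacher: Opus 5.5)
Your proof is correct and is essentially the paper's argument: the meet of $x,y$ is the median point $m$ of $\id,x,y$, and for any common lower bound $u$ you show that the median of $u,x,y$ satisfies the defining equalities for $\id,x,y$, so it equals $m$ by uniqueness, which gives $u\leq_R m$. Your explicit triangle-inequality check that $m'\in I(\id,x)$ is exactly the justification behind the paper's brief ``note that $z\leq_R n$'' step.
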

\begin{proof} 
	By Remark~\ref{rem:Complete}, one just has to show the existence of the meet of two elements. Let $x,y \in W$. We show that the meet of $x$ and $y$ is the median point $m$ of $\id, x, y$. Let $z \in W$ be such that $z\leq_R x,y$ and consider the median point $n$ of $x,y,z$. Note that $z \leq_R n$. We then have that
	\begin{equation*}
        d(x,\id) =  d(x,z)+d(z,\id) = d(x,n) + d(n,z)+d(z,\id) = d(x,n)+ d(n,\id)
    \end{equation*}
    and similarly $d(y,\id) = d(y,n)+ d(n,\id)$. Thus, $n$ is also the median point of $x,y,\id$ which implies by uniqueness that $n=m$. Since $z \leq_R m \leq_R x,y$ it follows that $x \wedge_R y = m$. 
\end{proof}

The {\em cactus group} $J_n$ ($n\in\mathbb N_{\geq 2}$) associated to the symmetric group $\mathfrak S_n$  were introduced explicitly by Henriques and Kamnitzer~\cite{HeKa06} in the context of the category of crystals, and independently by Devadoss in \cite{Devadoss} in the context of operads. Since then, they have appeared in several important works. In~\cite{genevois2022cactus}, Genevois showed that  $J_n$ have median Cayley graphs.

More generally, Davis, Januszkiewicz, and Scott~\cite[\S4.7]{BlowUps} introduced {\em cactus groups over Coxeter systems}, see also~\cite{Go25} for more details.   In~\cite{Ge25}, Genevois showed that cactus groups over Coxeter systems have {\em quasi-median} Cayley graphs. However, a quasi-median graph is a median graph if it does not contains any triangle, i.e., $3$-cycles~\cite[Theorem~4.2]{Ge25}. By definition, a cactus group over a Coxeter system is an \EIS, so its Cayley graph does not contain $3$-cycles and therefore is a median graph. The following corollary follows from the above discussion and Proposition~\ref{Median}.

\begin{cor}
\label{cor:Cactus}
A cactus group over a Coxeter system is an \EMIS.
\qed
\end{cor}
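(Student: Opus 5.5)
The corollary follows by combining Proposition~\ref{Median} with the two cited facts about cactus groups. The plan has three checks.

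First, confirm that a cactus group $(W,S)$ over a Coxeter system is an \EIS. By the presentation in \cite[\S4.7]{BlowUps}, every defining relation other than $s^2=\id$ has even length as a word in the free product $\U_S$. These are the relations conjugating one generator by another to a third, of the form $s t s = t'$, rewritten as $s t s t'$. Corollary~\ref{cor:EIS-relations} then gives a sign character. If one prefers to avoid the explicit relations, one can instead send every generator to $-1$ and check that each relation maps to $1$.

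Second, show that $\Cay(W,S)$ is median. By \cite{Ge25}, it is quasi-median. By \cite[Theorem~4.2]{Ge25}, a quasi-median graph is median as soon as it has no $3$-cycles. A $3$-cycle through $w$ would give $s_1s_2s_3=\id$ with $s_i\in S$, which is an odd relation. That contradicts the existence of the sign character, since $\varepsilon(s_1s_2s_3)=-1$. So the Cayley graph is triangle-free and therefore median.

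Third, Proposition~\ref{Median} applies and shows that $(W,S)$ is a \MIS. Being also an \EIS, it is an \EMIS.

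The only step needing care is the first: the generating set in \cite{BlowUps,Go25} must really consist of involutions, and every relation must have even length. I expect this to hold directly from the listed relations, which are all either $s^2$ or conjugation relations of length four.
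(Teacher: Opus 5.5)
Your proposal is correct and follows the paper's route. The even-length relations (involution, commutation and conjugation) give a sign character, which rules out $3$-cycles; Genevois' quasi-median Cayley graph is then median, and Proposition~\ref{Median} yields an \EMIS. You simply spell out the parity check that the paper compresses into ``by definition, a cactus group over a Coxeter system is an \EIS''.
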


%%%%%%%%%%%%
\subsection{The alternate group $\mathcal A_5$ is a \MIS but not an \EMIS}
\label{ss:Alt5}
\def\Ss{\mathfrak S}

The symmetric group $\Ss_5$ together with the set of simple transpositions $\tau_i=(i\ i+1)$ is a Coxeter system. 
The sign character $\varepsilon:\Ss_5\to\{\pm 1\}$ of the symmetric group $\Ss_5$ has for kernel  $\mathcal A_5=\ker\varepsilon$, the alternate group of order $60$.  Let $S=\{a,b,c\}$ with
$$
a=(12)(34),\ b=(23)(45)\ \text{and} \ c=(14)(23).
$$
Then $(\mathcal A_5,S)$ is an involution system of rank $3$ of Coxeter type
\begin{center}
 \begin{tikzpicture}[sommet/.style={inner sep=2pt,circle,draw=blue!75!black,fill=blue!40,thick}]
 \node[label=center:$H_3$:]  at (-1,0) {};
 	\node[sommet,label=below:$1$] (alpha) at (0,0) {};
	\node[sommet,label=below:$2$] (beta) at (1,0) {} edge[thick] node[above] {$5$} (alpha);
	\node[sommet,label=below:$3$] (gamma) at (2,0) {}edge[thick] node[above] {} (beta);
\end{tikzpicture}\hskip 1cm
 \end{center}
 which corresponds to the Coxeter system $(W',S')$ of type $H_3$, with $S'=\{s_1,s_2,s_3\}$. Therefore by Proposition~\ref{prop:Universal}, there is a unique epimorphism $\varphi: W'\to \mathcal A_5$ such that $\varphi(s_1)=a$,  $\varphi(s_2)=b$ and $\varphi(s_3)=c$.
 
\begin{center}
\begin{figure}[h!]
\includegraphics[width=12cm]{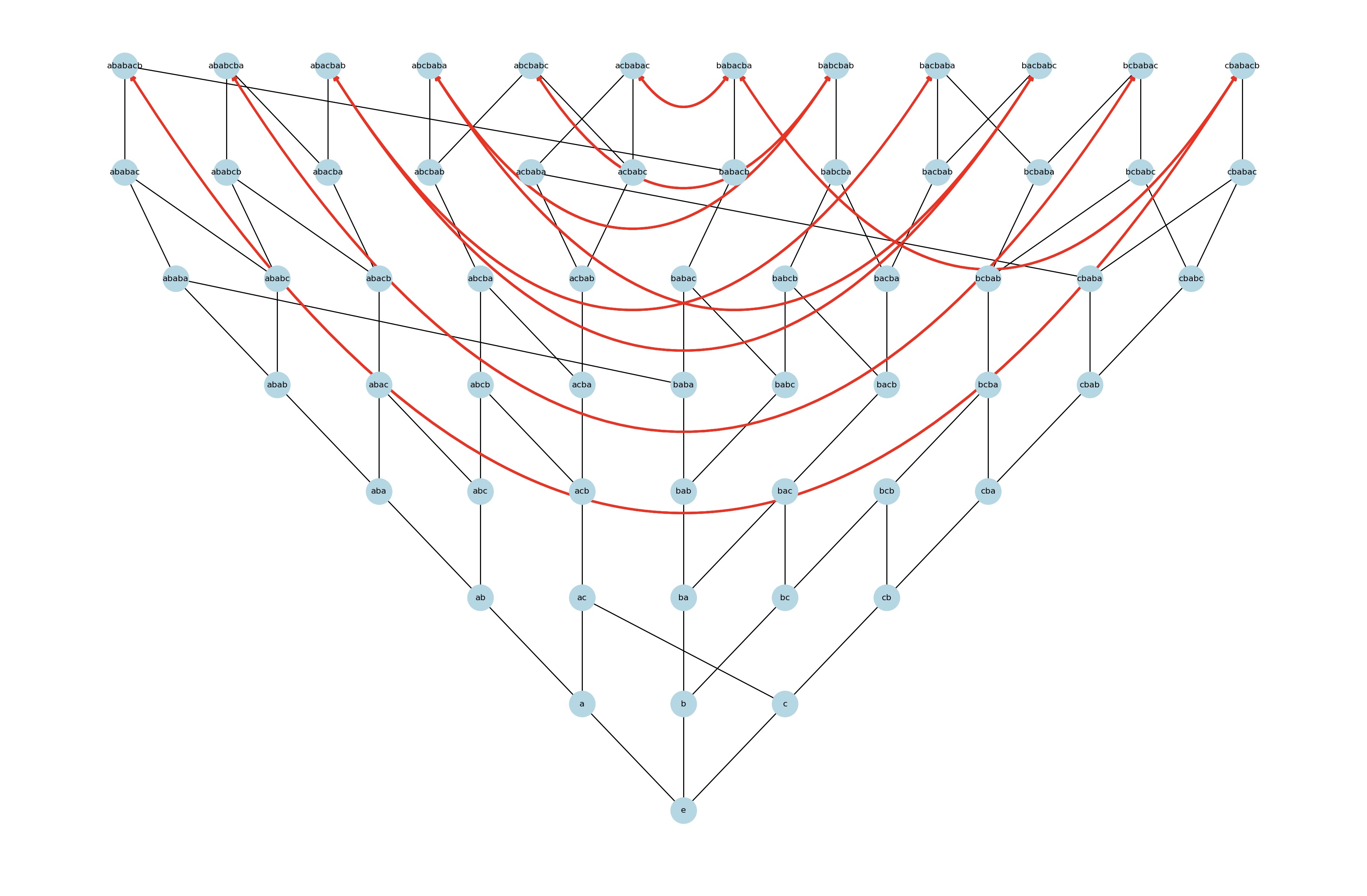}
\caption{The oriented Cayley graph $\oCay(\mathcal A_5,S)$. The red edges are those $\{w,ws\}$ that are not oriented, that is, $\ell(ws)=\ell(w)$. The Hasse diagram of $(\mathcal A_5,\leq_R)$ is obtained by considering only the black edges: it is a complete meet-semilattice but is not bounded.}
\label{fig:A5}
\end{figure}
\end{center}
 
However, $W$  is the isometry group of the dodecahedron of order $120$ and $\mathcal A_5$ is of order $60$.   Therefore, $(\mathcal A_5,S)$ is not a Coxeter system, since otherwise $\mathcal A_5$ would be isomorphic to $W$. 

Let $w_\circ=s_3s_2s_1s_2s_3(s_1s_2)^2s_3(s_1s_2)^2s_1$ the longest element of $(W,S)$. Then it is easy to check that $\ker\varphi=\mpair{w_\circ}$ is of order $2$ and therefore:
$$
\mathcal A_5=\mpair{S\mid a^2=b^2=c^2=(ab)^5=(bc)^3=(ac)^2=\ cbabc(ab)^2c(ab)^2a=e}.
$$

Now, consider $w=cbabcab=ababacba$ of length $7$. So $\ell(wa)=7=\ell(w)$, and therefore $(\mathcal A_5,S)$ does not have a sign character: $(\mathcal A_5,S)$ is not an \EIS. 

However, $(\mathcal A_5,\leq_{R})$ is a \MIS; see Figure~\ref{fig:A5}. Indeed, the Hasse diagram of ${{(\mathcal A_5,\leq_{R})}}$ can be seen as the subposet of $(W',\leq_R)$ considering only the elements of length smaller than or equal to $7$ (take the oriented subgraph of the Hasse diagram of $(W',\leq_{R})$ with vertices and edges arising from these elements). Note that the $\Cay(\mathcal A_5,S)$ can be seen as the folding  of $\Cay(W',S')$ mapping $w_\circ$ to $\id$, $w_\circ s$ to $s$ ($s\in S$), etc. Since $(W',\leq_R)$ is a complete lattice, $(\mathcal A_5,\leq_{R})$ is a complete meet-semilattice and therefore is a meet involution system (\MIS).

\begin{rem} This example can be generalized to the quotient of any finite Coxeter system $(W',S')$ by the subgroup of order $2$ generated by the longest element $w_\circ$ of $(W',S')$ if $w_\circ$ acts by conjugation on $S$ as the identity; this happens, for instance, in types $B$, $D_n$ ($n$ even), $E_7$, $E_8$, $F_4$, $H_3$ and $H_4$. It will be an \EMIS if $\ell(w_\circ)$ is even and just a \MIS otherwise (types $H_3$, $E_7$ and $B_n$ for $n$ odd). 
\end{rem}

%%%%%%%%%%%%%%%%%
%%%%%%%%%%%%%%%%%
%%%%%%%%%%%%%%%%%
%%%%%%%%%%%%%%%%%
\section{Presentation of \EMIS  by generators and relations}
\label{se:Presentation}
   In this section, we provide a presentation by generators and relations for any {\bf E}ven {\bf M}eet {\bf I}nvolution {\bf S}ystem. 

\smallskip
Let $(W,S)$ be an \EIS. The oriented Cayley graph $\oCay(W,S)$ is the Hasse diagram of $(W,\leq_R)$ by Proposition~\ref{prop:EIS}. Since $(W,S)$ is a quotient of the universal Coxeter system $(\U_S,S)$, there is a subset $R_0\subseteq\U_S$ such that 
$$
W=\mpair{S\mid \{s^2\mid s\in S\}\cup R_0},
$$
and any word $a_1\dots a_p\in R_0$ must have even length $p=2k$ in $\U_S$, by Corollary~\ref{cor:EIS-relations}. 
\smallskip

Now assume  $(W,S)$ is an  \EMIS and $s,t\in S$ bounded in $(W,\leq_R)$. Then there are two reduced words $sa_2\dots a_k$ and $tb_2\dots b_k$ such that $s\vee_Rt= sa_2\dots a_k=tb_2\dots b_k$ ($a_i,b_j\in S$).  Consider the element $\nu(s,t)\in \U_S$ defined by:
\begin{equation}
\label{eq:Relations}
\nu(s,t)=sa_2\dots a_kb_k\dots b_2t.
\end{equation}
The element $\nu(s,t)\in \U_S$ depends on the choice of the reduced words for $s\vee_R t$.

Note that if $(W,S)$ is of rank $2$, then $W$ is a dihedral group and thus $(W,S)$ is a Coxeter system generated by two involutions $s$ and $t$ with its classical Coxeter presentation and is  an \EMIS.

The aim of this section is to prove the following theorem.

\begin{thm}\label{thm:presentationEMIS} Let $(W,S)$ be an {\bf EMIS}.  Consider a total order $\prec$ on $S$. Let $R=\{s^2 \mid s\in S\}\cup R_0\subseteq \U_S$ where:
$$
R_0=\big\{\nu(s,t)\mid s\prec t \textrm{ in $S$ bounded in }(W,\leq_R)\big\}\subseteq \U_S.
$$
Then $W=\mpair{S\mid R}$ is a finite presentation of~$W$.
\end{thm}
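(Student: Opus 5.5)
Let $\widetilde W=\mpair{S\mid R}$ and let $\pi:\widetilde W\to W$ be the natural epimorphism. It exists because each $\nu(s,t)$ equals $\id$ in $W$: the two words $sa_2\cdots a_k$ and $tb_2\cdots b_k$ represent the same element $s\vee_R t$. Finiteness of the presentation is immediate, since $S$ is finite and there is at most one relation per pair $s\prec t$. So the task is to show $\pi$ is injective. Equivalently, every word $w\in S^*$ with $w=\id$ in $W$ reduces to the empty word in $\widetilde W$ using the relations of $R$. The standard way is a Matsumoto/Tits-type argument via the Cayley graph. The relations of $\widetilde W$ are exactly the words read along closed walks in $\Cay(W,S)$ based at $\id$. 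It suffices to show that every closed walk is a product, in the fundamental group of $\Cay(W,S)$, of conjugates of the cycles labelled $\nu(s,t)$ translated by elements of $W$, together with backtrackings (the relations $s^2$). In other words, the $2$-complex obtained by gluing discs along all translates $w\cdot\nu(s,t)$ is simply connected.

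I would prove this via the following key lemma, which plays the role of Matsumoto's theorem: any two reduced words for the same $w\in W$ are connected by a sequence of moves replacing a factor $sa_2\cdots a_k$ by $tb_2\cdots b_k$, where these are the chosen reduced words of some $s\vee_R t$, or the reverse. The proof is by induction on $\ell(w)$. Given reduced words $s u$ and $t u'$ for $w$ with $s\neq t$, we have $s,t\le_R w$, so $s,t$ are bounded and $s\vee_R t\le_R w$ exists because $(W,S)$ is a \MIS. Write $w=(s\vee_R t)\,v$ as a reduced product. Then $su$ and $sa_2\cdots a_k v$ both start with $s$, so after cancelling $s$ the induction hypothesis on $sw$ connects them. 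The same argument connects $tu'$ and $tb_2\cdots b_k v$. These two are related by one braid-type move. The case $s=t$ is direct induction. Here the meet property, via existence of joins of bounded pairs, is what replaces the exchange condition. Since $s\vee_R t$ need not be $s$ or $t$, the two reduced words $sa_2\cdots a_k$ and $tb_2\cdots b_k$ have the same length by the grading.

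Next, I would reduce an arbitrary relation to pairs of reduced words. Let $w=x_1\cdots x_n$ with $w=\id$ in $W$, and induct on $n$ (and on some complexity of the walk). Walk along the path $\id, x_1, x_1x_2,\dots$ and choose a vertex $g$ at which $\ell$ is maximal. Since $(W,S)$ is an \EIS, $\ell$ changes by exactly $\pm1$ along each edge (Proposition~\ref{prop:Sign}). The neighbours of $g$ on the walk therefore both have length $\ell(g)-1$, so the preceding and following steps are descents. The aim is to replace the walk by one with strictly smaller maximum, or with fewer vertices at the maximal level, using only relations from $R$.

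Suppose the walk enters $g$ through $gs$ and leaves through $gt$, with $s,t\in D_R(g)$. If $s=t$, cancel using $s^2$. If $s\neq t$, then $g^{-1}$ lies above both $s$ and $t$ in the weak order with respect to $g$. Dually, with base point $g$, the join $s\vee t$ in $\le_R^{g}$ lies below the element $\id$ relative to $g$, since both $gs$ and $gt$ lie on geodesics from $g$ to $\id$. Translating, the cycle $g\,\nu(s,t)$ passes through $g$ using the edges towards $gs$ and $gt$. All its other vertices lie on geodesics from $g$ towards $\id$, so they have length less than $\ell(g)$. Replacing the two edges at $g$ by the complementary arc of this translated relator removes that occurrence of the maximum and introduces only vertices of smaller length. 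Termination follows by induction on the pair consisting of the maximal length and the number of its occurrences.

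The main obstacle is justifying this last step: that $g\,\nu(s,t)$ indeed lies below $g$ in length, i.e. that its complementary arc is geodesic towards $\id$. This needs the meet property with respect to the base point $g$, namely that $\le_R^{g}$ is again a complete meet-semilattice. That holds by the isomorphism $(W,\le_R^{g})\cong(W,\le_R)$, together with a careful check that the join of $gs,gt$ in $\le_R^{g}$ is below $\id$. Once this is verified, the reduction terminates, every relation of $W$ lies in the normal closure of $R$, and $W=\mpair{S\mid R}$.
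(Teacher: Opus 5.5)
Your proof is correct, and it takes a genuinely different route from the paper's.

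\textbf{The paper's route.} The paper works with irreducible cycles of $\G=\Cay(W,S)$. It shows that they weakly intersect and are convex (Propositions~\ref{weakly-lemma} and~\ref{prop:IrrConvex}). It shows that each $C\in\Irr_{\id}$ has $s\vee_R t$ as the vertex opposite to $\id$ and is labelled by $\nu(s,t)$ (Proposition~\ref{prop:EMISJoin}). It then uses that the labels of the cycles in $\Irr_{\id}$ already present $W$ (Proposition~\ref{prop:RelationsIrred}). Since these labels form a subset $R_0'\subseteq R_0$ of relations of $W$, the theorem follows.

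\textbf{Your route, and two remarks on the write-up.} You instead contract closed walks by peak reduction. First, the peak reduction by itself already finishes the proof. So your Matsumoto-type lemma (correct, and in fact valid for any \MIS) is never used, and the second step does not ``reduce to pairs of reduced words'' as announced. Second, the step you flag as the main obstacle is a one-liner:
\begin{itemize}
\item If $s,t\in D_R(g)$, then $s,t\in D_L(g^{-1})$, so $s,t\le_R g^{-1}$ by Proposition~\ref{prop:Descents}, hence $s\vee_R t\le_R g^{-1}$.
\item For any $x\le_R g^{-1}$, writing $g^{-1}=xy$ as a reduced product gives $gx=y^{-1}$, so $\ell(gx)=\ell(g)-\ell(x)$.
\item Applied to the nonempty prefixes of $sa_2\cdots a_k$ and $tb_2\cdots b_k$, this shows that every vertex of $g\,\nu(s,t)$ other than $g$ has length $<\ell(g)$.
\end{itemize}
No semilattice structure on $\le_R^g$ is needed beyond translation by $g$. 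On words, the move replaces the factor $st$ by $a_2\cdots a_kb_k\cdots b_2=s\,\nu(s,t)\,t$ (in $\U_S$). This is legitimate modulo $s^2$, $t^2$ and the relator of $R_0$ attached to $\{s,t\}$, taking the words used to define that relator.

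\textbf{What each route buys.} Yours is short and uses only two inputs. One is the sign character, which makes both walk-neighbours of a peak descents. The other is the existence of joins of bounded pairs of generators. It does not need the full meet property, which the paper uses through meets $x\wedge_R^\omega y$ in Lemma~\ref{irr-lemma}.

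It also works at the level of words, i.e.\ of $\pi_1$, and this is a real advantage. Proposition~\ref{prop:RelationsIrred} infers that attaching $2$-cells along irreducible cycles gives a simply connected space from the fact that $\Irr(\G)$ spans the $\Z_2$-cycle space $\mathcal{C}(\G)$. That inference needs an extra argument, since vanishing of $H_1(\,\cdot\,;\Z_2)$ does not by itself force $\pi_1=1$. Your argument proves the stated theorem without this step.

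The paper's route, in turn, yields much more than the presentation:
\begin{itemize}
\item the identification of the cycles in $\Irr_{\id}$ with joins, and uniqueness of the reduced words of these joins;
\item weak intersection and convexity of irreducible cycles;
\item the possibly smaller relator set $R_0'$, which underlies the companion graph and the $2$-recognizable presentations.
\end{itemize}
Your argument gives no information on which $\nu(s,t)$ are redundant.
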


In order to prove Theorem~\ref{thm:presentationEMIS}, we discuss in \S\ref{ss:CycleSpace} the notion of irreducible cycles in Cayley graphs. 

\begin{ex} Consider the \EMIS $(W,S)$ of Example~\ref{ex:EMIS-A2}. Then: 
$$
s_1\vee_R s_2= s_1 s_3 s_2=s_2s_3s_1,\ s_2\vee_R s_3= s_2 s_1 s_3=s_3s_1s_2,\ s_1\vee_R s_3= s_1 s_2 s_3=s_3s_2s_1. 
$$
In $\U_S$ we have therefore the following reduced words:
$$
\nu(s_1,s_2)=  (s_1 s_3 s_2)^2, \quad \nu(s_2, s_3)= (s_2 s_1 s_3)^2,\quad \nu(s_1, s_3)= (s_1 s_2 s_3)^2. 
$$
So $W=\mpair{s_1,s_2,s_3\mid s_i^2= (s_1 s_3 s_2)^2= (s_2 s_1 s_3)^2= (s_1 s_2 s_3)^2= \id}$ by Theorem~\ref{thm:presentationEMIS}.

\smallskip\noindent  Now, in $\U_S$, observe that $\nu(s_1,s_2)=s_2\nu(s_2,s_3)s_2$ and $\nu(s_1,s_3)=s_1\nu(s_1,s_2)^{-1}s_1$. Therefore the presentation given above is not minimal (relative to $S$), but the presentation $W=\mpair{s_1,s_2,s_3\mid s_i^2= (s_1 s_2 s_3)^2= \id}$ given in Example~\ref{ex:EMIS-A2} is minimal.
\end{ex}

\begin{rem} 
\label{rem:presentation}
\begin{enumerate}
\item As mentioned in the previous example, the presentation of $(W,S)$ in Theorem~\ref{thm:presentationEMIS} is not minimal in general; see~\S\ref{se:2recognizable} for further discussions.
\item Theorem~\ref{thm:presentationEMIS} does not apply to \MIS that do not have a sign character, i.e., that are not \EIS. For instance, for the \MIS $(\A_5,S)$ as in \S\ref{ss:Alt5}, the join of any pair $s,t$ of distinct generators provides the word $\nu(s,t)=(st)^{m(s,t)}$ in $\U_S$, where $m(s,t)=\ord_{\A_5}(st)$; however, the relation arising from the longest element of  the Coxeter system of type $H_3$ cannot appear in this way.
\item  The word $\nu(s,t)$ is reduced in $\U_S$. Indeed, assume by contradiction that  $\nu(s,t)$ is not reduced in $\U_S$. Then $a_k=b_k$ since $s$ and $t$ are distinct and the words $sa_2\dots a_k$ and $tb_2\dots b_k$ are reduced in $(W,S)$ so are in $\U_S$. But then  $s\vee_R t =sa_2\dots a_{k-1}=tb_2\dots b_{k-1}$, 
contradicting that $\ell(s\vee_R t)=k+1$.  
\end{enumerate}
\end{rem}

\begin{question} The presentation above is equivalent to the following {\em `braid-relation type'} presentation:
$$
W=\mpair{S\mid r^2=e, \ sa_2\dots a_k=tb_2\dots b_k ,\, \forall r \in S \text{ and }  s, t \in S \textrm{ bounded in }(W,\leq_R)}.
$$
By removing the relation $r^2=e$ ($r\in S$), we obtain the analog of an Artin-Tits group for an \EMIS. What are its properties? Is it the fundamental group of some interesting topological space?
\end{question}

We observe that this question has been studied by Scott in \cite{RAMRG} for the case of an \EMIS with median Cayley graph $(W,S)$. In particular, the author constructs for $W$ the analog of a Salvetti complex $Y$, and proves that the \emph{mock Artin group} $A$ corresponding to $W$ is the fundamental group of the quotient space $Y/W$, which is shown to be a finite $K(\pi,1)$ for $A$.

\smallskip
The following corollary provides a new characterization of Coxeter systems related to the notion of weak order.

\begin{cor}\label{cor:CoxeterCharac} Let $(W,S)$ be an involution system. Then the following conditions are equivalent.
\begin{enumerate}
\item\label{CoxCar-it1} $(W,S)$ is a Coxeter system;
\item\label{CoxCar-it2} $(W,S)$ is an \EMIS and for any $s,t\in S$ bounded, the reduced word $\nu(s,t)$ in $\U_S$ (defined in the above theorem) has its letters in $\{s,t\}$.
\end{enumerate}
\end{cor}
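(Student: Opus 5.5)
The plan is to prove the two implications separately, relying on Theorem~\ref{thm:presentationEMIS} and on Bj\"orner's theorem for Coxeter systems.

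For (\ref{CoxCar-it1}) $\Rightarrow$ (\ref{CoxCar-it2}), let $(W,S)$ be a Coxeter system. By Bj\"orner's theorem its weak order is a complete meet-semilattice. It is an \EIS, so it is an \EMIS. Let $s,t\in S$ be bounded. In a Coxeter system, $s$ and $t$ are bounded exactly when the standard parabolic subgroup $W_{\{s,t\}}$ is finite, i.e. $m(s,t)<\infty$. In that case $s\vee_R t$ is the longest element of $W_{\{s,t\}}$; this is the classical fact that the join of a set $J$ of simple reflections is $w_\circ(J)$ (see \cite[Chapter~3]{BjBr05}). Every reduced word of an element of the parabolic subgroup $W_J$ uses only letters of $J$, which follows from the subword or deletion property. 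Hence the two reduced words of $s\vee_R t$ are the alternating words of length $m(s,t)$, and $\nu(s,t)=(st)^{m(s,t)}$ has its letters in $\{s,t\}$. The one point to check is that the reduced words of $s\vee_R t$ are forced to be these alternating words. This is fine because any reduced word of the longest element of the dihedral group $W_{\{s,t\}}$ is alternating.

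For (\ref{CoxCar-it2}) $\Rightarrow$ (\ref{CoxCar-it1}), apply Theorem~\ref{thm:presentationEMIS}. It gives $W=\mpair{S\mid R}$ with $R=\{r^2\}\cup R_0$. By hypothesis each $\nu(s,t)\in R_0$ is a word in $\{s,t\}$. By Remark~\ref{rem:presentation}(3) it is reduced in $\U_S$, so it is an alternating word. It has even length by Corollary~\ref{cor:EIS-relations}, and it begins with $s$ and ends with $t$. Therefore $\nu(s,t)=(st)^{k}$ for some $k\geq1$.

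It remains to see that $k=\ord_W(st)$, or more simply that these relations generate the same normal subgroup as the Coxeter relations of the Coxeter type. I would argue as follows. In $W$ we have $(st)^k=\id$, so $m(s,t):=\ord_W(st)$ divides $k$. Conversely, $(st)^{m}=\id$ for $m=m(s,t)$ gives the equality $sts\cdots = tst\cdots$ with $m$ letters on each side. Both sides are upper bounds of $s$ and $t$, provided these words are reduced, which I will establish next. This yields $\ell(s\vee_R t)\leq m$, while $\ell(s\vee_R t)=k$, so $k\leq m$ and hence $k=m$.

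To show that the alternating word of length $m$ is reduced, suppose it were not. Then a shorter alternating word would represent the same element, and the element would have order less than $m$, a contradiction with the dihedral structure of $\mpair{s,t}$, which has order $2m$.

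Consequently the presentation of Theorem~\ref{thm:presentationEMIS} contains only relations $(st)^{m(s,t)}=\id$, with $m(s,t)$ finite, for bounded pairs. For unbounded pairs it contains nothing, and I will show that this matches $m(s,t)=\infty$. If $\ord(st)=m<\infty$, then the alternating words of length $m$ give a common upper bound, by the argument above, so the pair is bounded. Hence the presentation is exactly the Coxeter presentation of the Coxeter type of $(W,S)$. By Proposition~\ref{prop:Universal}, $\varphi$ is an isomorphism and $(W,S)$ is a Coxeter system.

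The main obstacle is the reducedness claim inside the dihedral subgroup. Since $\mpair{s,t}$ is dihedral of order $2m$ in $W$, no shorter word in $\{s,t\}$ gives that element. However, a shorter word using other generators might exist in $W$, so I expect this step to need care. What keeps the argument safe is that only the inequality $\ell(s\vee_R t)\le m$ is needed, and that inequality holds even if the alternating words of length $m$ are not reduced. So I would derive $k\le m$ directly from the join being the least upper bound.
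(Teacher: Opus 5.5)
Your implication (1)$\Rightarrow$(2) is the paper's argument. The first step of (2)$\Rightarrow$(1) is also correct: you read off from Theorem~\ref{thm:presentationEMIS} that each $\nu(s,t)$ equals $(st)^{k_{s,t}}$. Note that in fact $k_{s,t}=\ell(s\vee_R t)\geq 2$ because $s\neq t$, which is what a Coxeter matrix requires.

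The gap is in the detour that follows. To match the Coxeter type and invoke Proposition~\ref{prop:Universal}, you need the alternating words of length $m=\ord_W(st)$ to be reduced in $(W,S)$. Your justification for this is invalid, as you yourself notice: a shorter expression may use other generators. Your proposed rescue is also unfounded. If $u=sts\cdots$ ($m$ letters) is not known to be reduced, nothing gives $s\leq_R u$ or $t\leq_R u$, so $u$ need not be an upper bound, and $\ell(s\vee_R t)\leq m$ does not follow. For bounded pairs this can be patched. We have $m\mid k$, and if $m<k$ then $k\geq 2m$. The alternating word $(st)^m$ would then be a prefix of the reduced word $sa_2\cdots a_k=sts\cdots$ of $s\vee_R t$ while representing $\id$, which is impossible. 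For unbounded pairs, however, your claim that $\ord_W(st)<\infty$ forces $s,t$ to be bounded rests entirely on the faulty reducedness argument, and there is no join to compare with. So this step is genuinely unproved as written.

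The fix is to drop the detour altogether. Once $R_0$ consists of words $(st)^{k_{s,t}}$ with $k_{s,t}\geq 2$ for bounded $s\prec t$, the presentation $\mpair{S\mid r^2,\ (st)^{k_{s,t}}}$ is by definition the Coxeter presentation for the Coxeter matrix with $m(s,t)=k_{s,t}$ on bounded pairs and $m(s,t)=\infty$ otherwise. Hence $(W,S)$ is a Coxeter system; this is exactly the paper's one-line argument. The equality $\ord_W(st)=m(s,t)$ then follows from standard Coxeter theory rather than being an input you must establish. In particular, $st$ has infinite order for unbounded pairs, and this matrix is the Coxeter type.
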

\begin{proof} The implication from (\ref{CoxCar-it1}) to (\ref{CoxCar-it2}) follows from the fact that for any $s,t\in S$ bounded in a Coxeter system,  $s\vee_R t$ is the longest element in the standard parabolic dihedral subgroup generated by $s$ and $t$, hence its reduced words have letters in $\{s,t\}$. Conversely, if~(\ref{CoxCar-it2}) holds, then the presentation in Theorem~\ref{thm:presentationEMIS} is one of a Coxeter system by definition. 
\end{proof}

%%%%%
\subsection{Irreducible cycles}\label{ss:CycleSpace}  A key notion to prove Theorem~\ref{thm:presentationEMIS} is the notion of irreducible cycles that we discuss now. 

Let $\G$ be a graph. Recall that a {\em cycle of $\G$} is a 2-regular connected subgraph of $\G$. A {\em circuit of $\G$} is the set of edges of a cycle of $\G$. The {\em length of a cycle} is its number of edges, or equivalently, its number of vertices. The \emph{edge space} $\mathcal{E}(\G)$ of $\G$ is the $\Z_2$-vector space consisting of all finite subsets of edges of $\G$ with symmetric difference as addition. The \emph{cycle space} $\mathcal{C}(\G)$ of $\G$ is the subspace of $\mathcal{E}(\G)$ consisting of the finite sums of circuits. Note that $\mathcal{C}(\G)$ coincides with the first singular homology group of $\G$ defined over $\Z_2$; see for instance \cite{cubicplanar} and the references therein for more information.

\begin{defi}
    We say that a cycle is {\em reducible} if its associated circuit is the sum in $\mathcal{C}(\G)$ of circuits of strictly lower length.  A cycle is called \emph{irreducible} if it is not reducible (see Figure~\ref{fig:exIrred} for a basic example). 
    
    Denote by $\Irr(\G)$ the set of irreducible cycles of $\G$. For a vertex $\omega$ of $\G$, we denote by $\Irr_\omega$ the set of irreducible cycles in $\Irr(\G)$ containing $\omega$ as a vertex. 
\end{defi}

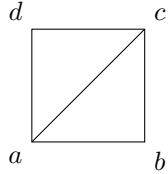
\begin{figure}
    \centering
\begin{tikzpicture}[scale=1.5]
  % vertices
  \coordinate (a) at (0,0);
  \coordinate (b) at (1,0);
  \coordinate (c) at (1,1);
  \coordinate (d) at (0,1);

  % square
  \draw (a) -- (b) -- (c) -- (d) -- cycle;

  % diagonal
  \draw (a) -- (c);

  % labels
  \node[below left]  at (a) {$a$};
  \node[below right] at (b) {$b$};
  \node[above right] at (c) {$c$};
  \node[above left]  at (d) {$d$};
\end{tikzpicture}
    \caption{A reducible cycle with two irreducible cycles: the cycle $(a,b,c,d)$ of length $4$ is not irreducible in $\G$ since it is reducible into the two cycles $(a,b,c)$ and $(a,c,d)$ of length $3$.}
    \label{fig:exIrred}
\end{figure}

In what follows, we consider $\G=\Cay(W,S)$ to be the Cayley graph of an \EMIS. The results in this section could be generalized to any graph having all cycles of even length: given a vertex $\omega$ of $\G$, the weak order $\leq_R^\omega$ is defined by the geodesic distance to $\omega$ as in \eqref{eq:Weak}; the complete meet-semilattice condition generalizes naturally. 

Since cycles and circuits are in one-to-one correspondence, we will use the two notions interchangeably. Moreover, given an edge-loop $\gamma$, we will, with slight abuse of notation, also denote by $\gamma$ the element of $\mathcal{C}(\G)$ determined by it (that is, the sum of circuits obtained by removing edges that appear an even number of times in $\gamma$). 

The aim of this section is to describe the structure of irreducible cycles in relation to the meet-semilattice property. We first prove the following lemma (which more generally holds for the Cayley graph of any \EIS).

\begin{lem} \label{lem-geo}
    Let $C \in \Irr(\G)$. Then $C$ has length $n=2k$ with $k\in\mathbb N_{\geq 2}$ and any edge-path of $C$ of length $ \leq k$ is a geodesic.
\end{lem}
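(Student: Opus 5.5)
Since $(W,S)$ is an \EIS, the sign character of Proposition~\ref{prop:Sign} gives a proper $2$-coloring of $\G$: every edge $\{w,ws\}$ joins elements of opposite sign. Hence $\G$ is bipartite and every cycle, in particular $C$, has even length $n=2k$. Moreover $\G$ has no loops (each $s\neq\id$) and no multiple edges (distinct generators give distinct vertices), so $n\geq 4$, i.e.\ $k\geq 2$. This first part uses neither irreducibility nor the meet property.

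For the geodesic statement, argue by contradiction. Suppose some edge-path $P$ of $C$, from $u$ to $v$, has length $p\leq k$ but is not geodesic, so $d(u,v)<p$. Choose a geodesic $Q$ in $\G$ from $u$ to $v$, of length $q=d(u,v)<p$. Let $P'$ be the complementary arc of $C$ from $u$ to $v$; it has length $n-p\geq k\geq p>q$. In the cycle space $\mathcal{C}(\G)$ we have $C=(P+Q)+(P'+Q)$, where $P+Q$ and $P'+Q$ denote the closed edge-loops obtained by concatenating $P$ (resp.\ $P'$) with $Q$ reversed, regarded as elements of $\mathcal{C}(\G)$ by the convention in \S\ref{ss:CycleSpace}. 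The loop $P+Q$ has total length $p+q<2p\leq n$, and $P'+Q$ has length $n-p+q<n$.

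Any element of $\mathcal{C}(\G)$ supported on a closed edge-loop of length $L$ is a sum of circuits, each using only edges of that loop, hence each of length at most $L$. More precisely, a closed walk decomposes edge-disjointly, mod $2$, into circuits whose lengths sum to at most $L$. So both $P+Q$ and $P'+Q$ are sums of circuits of length strictly less than $n$, and then $C$ is a sum of circuits of strictly smaller length. This contradicts irreducibility. The degenerate case where $P+Q$ or $P'+Q$ is zero in $\mathcal{C}(\G)$ is harmless, since the zero element is the empty sum.

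The only point needing care is this decomposition claim: the symmetric difference of the edge sets of a closed walk is an even-degree subgraph, hence an edge-disjoint union of circuits, each shorter than the walk. I expect this to be the main (though routine) obstacle, and I would state it as a short auxiliary observation.
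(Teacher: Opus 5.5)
Your proposal is correct and follows essentially the paper's proof. Evenness comes from the sign character, which is equivalent to the paper's appeal to Corollary~\ref{cor:EIS-relations}. You then split $C$ along a shorter geodesic into two closed loops of lengths $p+q$ and $n-p+q$, both strictly less than $n$. Your extra observations are details the paper leaves implicit: these loops are closed walks that decompose mod $2$ into circuits of no greater length, and $k\geq 2$ holds because $\G$ is simple.
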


\begin{proof} Let $\omega$ be a vertex in $C$. By reading the labels of the edges of $C$ in a loop from $\omega$ to $\omega$, we obtain a word $s_1\cdots s_n=\id$. Since $\G$ is the Cayley graph of an \EIS,  we have $n=2k$  by Corollary~\ref{cor:EIS-relations}.
    
    Now, suppose $\gamma$ is an edge-path of $C$ of length $p \leq k$ that is not a geodesic. Denote by $x,y$ the endpoints of $\gamma$, and let $l=d(x,y)<p$. Then we can decompose $C$ into a cycle of length $p+l<n$ and another cycle of length $n-p+l<n$, contradicting the fact that $C$ is irreducible. 
\end{proof}

We say that $x,\omega$ are {\em opposite vertices} in a cycle $C$ of length $2k$ in $\G$ if a geodesic in $C$ between $\omega$ and $x$ has length $k$. Recall also from \eqref{eq:Weak} the definition of the weak order related to an element  $\omega\in W$, i.e., a vertex of $\G$. The above lemma has the following useful consequence.

\begin{cor}
    Let $x,\omega$ be opposite vertices in a cycle $C \in \Irr(\G)$ and let $a,b$ be two other vertices of $C$ distinct from $x,\omega$. Then $a,b$ are comparable with respect to $\leq_R^{\omega}$ if and only if they are vertices of the same geodesic of $C$ with endpoints $x$ and $\omega$.
\end{cor}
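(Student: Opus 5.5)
The cycle $C$ has length $2k$, and removing $x$ and $\omega$ splits it into two open arcs. Call them $\gamma_1$ and $\gamma_2$; each is the interior of a path of length $k$ in $C$ from $\omega$ to $x$. By Lemma~\ref{lem-geo} both of these paths are geodesics in $\G$, so $d(\omega,x)=k$. Every vertex $v\neq\omega,x$ of $C$ lies in exactly one of $\gamma_1,\gamma_2$, and its distance to $\omega$ equals its distance along $C$. This last point follows from Lemma~\ref{lem-geo} as well, because the shorter arc of $C$ from $\omega$ to $v$ has length at most $k$.

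For the \emph{if} direction, suppose $a$ and $b$ both lie on the same geodesic, say $\gamma_1\cup\{\omega,x\}$, with $a$ closer to $\omega$. Then the subpath from $\omega$ through $a$ to $b$ is a subpath of a geodesic, hence itself a geodesic. So $a$ lies on a geodesic from $\omega$ to $b$, which means $a\leq_R^\omega b$ by \eqref{eq:Weak}.

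For the \emph{only if} direction, suppose $a\in\gamma_1$ and $b\in\gamma_2$, and assume for contradiction that $a\leq_R^\omega b$. Set $p=d(\omega,a)$ and $q=d(\omega,b)$. Both lie strictly between $0$ and $k$, and $p<q$ since $a\neq b$ and $a\le^\omega_R b$. Comparability gives $d(a,b)=q-p$. Now $C$ splits along a geodesic $\delta$ from $a$ to $b$ of length $q-p$ into two closed walks:
\begin{itemize}
\item the walk through $\omega$, formed by the arc of $C$ from $a$ through $\omega$ to $b$ (length $p+q$) followed by $\delta$, of total length $p+q+(q-p)=2q$;
\item the walk through $x$, formed by the arc of $C$ from $a$ through $x$ to $b$ (length $2k-p-q$) followed by $\delta$, of total length $2k-2p$.
\end{itemize}
In the cycle space, the circuit of $C$ is the sum of these two closed walks, since the edges of $\delta$ cancel.

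Each closed walk is a sum of circuits of length at most its own length. Since $q<k$ we have $2q<2k$, and since $p>0$ we have $2k-2p<2k$. So $C$ is a sum of circuits of strictly smaller length, which contradicts $C\in\Irr(\G)$. The case $b\leq_R^\omega a$ is symmetric. Hence $a$ and $b$ are incomparable. This is the same decomposition trick used in the proof of Lemma~\ref{lem-geo}.

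The step that needs the most care is the strictness of both length inequalities. These rely exactly on the hypothesis that $a$ and $b$ are distinct from $x$ and $\omega$, which gives $p\geq1$ and $q\leq k-1$. One should also note that a closed walk of length $L$ in $\G$ represents a sum of circuits each of length at most $L$, which is a standard fact.
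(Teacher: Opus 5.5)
Your proof is correct and follows the route the paper intends. The paper states this corollary without proof, as an immediate consequence of Lemma~\ref{lem-geo}, and your decomposition of $C$ into closed walks of lengths $2q$ and $2k-2p$ is exactly that lemma's argument rerun for the pair $a,b$. You could shorten the \emph{only if} direction by citing the lemma directly: the shorter arc of $C$ between $a$ and $b$ has length at most $k$, so $d(a,b)=\min(p+q,\,2k-p-q)>|p-q|$ because $1\leq p,q\leq k-1$. Hence $d(\omega,a)+d(a,b)\neq d(\omega,b)$ and $d(\omega,b)+d(a,b)\neq d(\omega,a)$.
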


Given an irreducible cycle $C \in \Irr(\G)$ and non-opposite vertices $a,b$ of $C$, we will denote by $\gamma^C_{ab}$ the unique geodesic of $C$ with endpoints $a$ and $b$. When the irreducible cycle in question is clear from the context, we will drop the superscript $C$ and just write $\gamma_{ab}$. Moreover, given edge-paths $\gamma$ and $\gamma'$ such that the endpoint of $\gamma$ is the starting vertex of $\gamma'$, we will write $\gamma\gamma'$ for the edge-path consisting of their concatenation.

\begin{lem} \label{irr-lemma}  Let $x,\omega$ be opposite vertices in a cycle $C \in \Irr(\G)$. Let $a,b$ be vertices of $C$ distinct from $x,\omega$ that are incomparable with respect to $\leq_R^{\omega}$. Let $y$ be a vertex of $\G$ satisfying $a,b \leq_R^{\omega} y$. Then:
$$
x=x \wedge_R^{\omega} y \leq_R^\omega y,\quad  a\vee_R^{\omega}b= x\quad \text{and}\quad a\wedge_R^{\omega}b= \omega.
$$
Moreover, if $d(x,\omega) = d(y,\omega)$ or if $\omega,a,b,y$ are all vertices in an irreducible cycle, then $y = x \wedge_R^{\omega} y = x$.
\end{lem}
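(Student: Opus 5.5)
We write $\leq$ for $\leq_R^\omega$ throughout. Set $n=2k=|C|$ and $h(v)=d(\omega,v)$. By Lemma~\ref{lem-geo}, the two halves of $C$ between $\omega$ and $x$ are geodesics $\omega\to x$ of length $k$. By the corollary following Lemma~\ref{lem-geo}, incomparability forces $a$ and $b$ to lie in the interiors of different halves, so $0<h(a),h(b)<k$. The plan is a surgery argument: any lower or upper bound strictly between would allow us to write $C$ as a sum in $\mathcal C(\G)$ of strictly shorter cycles, contradicting $C\in\Irr(\G)$.

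\emph{Step 1: $x\le y$.} Since $(W,S)$ is a \MIS, $m:=x\wedge^\omega_R y$ exists. From $a\le x$ and $a\le y$ we get $a\le m$, and likewise $b\le m$; also $m\le x$. Suppose $m\neq x$, so $h(m)<k$. Choose geodesics $\delta_a$ from $a$ to $m$, $\delta_b$ from $b$ to $m$, and $\delta$ from $m$ to $x$. Their lengths are $h(m)-h(a)$, $h(m)-h(b)$ and $k-h(m)$. Consider the three closed edge-loops
\[
\gamma_{\omega a}\,\delta_a\,\delta_b^{-1}\,\gamma_{b\omega},\qquad \gamma_{ax}\,\delta^{-1}\,\delta_a^{-1},\qquad \gamma_{bx}\,\delta^{-1}\,\delta_b^{-1}.
\]
Their lengths are $2h(m)$, $2(k-h(a))$ and $2(k-h(b))$, all strictly less than $2k$. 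The edges of $\delta_a$, $\delta_b$ and $\delta$ each occur twice in total, so the sum of the three loops in $\mathcal C(\G)$ is exactly $C$. Each loop is itself a sum of circuits of length at most its own length, so $C$ is reducible, a contradiction. Hence $m=x$, that is, $x=x\wedge^\omega_R y\le y$.

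\emph{Step 2: join and meet.} The vertex $x$ is an upper bound of $\{a,b\}$. By Step~1 every upper bound $y$ satisfies $x\le y$, so $a\vee^\omega_R b=x$. For the meet, let $z\le a,b$ and suppose $z\ne\omega$. Take geodesics $\epsilon_a$ from $z$ to $a$, $\epsilon_b$ from $z$ to $b$, and $\epsilon$ from $\omega$ to $z$. The loops
\[
\epsilon_a\,\gamma_{ax}\,\gamma_{xb}\,\epsilon_b^{-1},\qquad \epsilon\,\epsilon_a\,\gamma_{a\omega},\qquad \epsilon\,\epsilon_b\,\gamma_{b\omega}
\]
have lengths $2(k-h(z))$, $2h(a)$ and $2h(b)$, all strictly less than $2k$. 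As before, they sum to $C$, again a contradiction. Thus $\omega$ is the only common lower bound of $a$ and $b$, and it is the meet.

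\emph{Step 3: the ``moreover'' part.} If $h(y)=h(x)$, then $x\le y$ forces $x=y$. Now suppose instead that $\omega,a,b,y$ lie on some $C'\in\Irr(\G)$, and let $x'$ be the vertex of $C'$ opposite $\omega$. The relations $a\le y$ and $b\le y$ are intrinsic to the order and do not depend on the cycle, so $a$ and $b$ are comparable to $y$ and incomparable to each other. We apply the corollary following Lemma~\ref{lem-geo} to $C'$: since $a,b$ are incomparable and distinct from $\omega$, they lie in different open halves of $C'$. If $y$ were interior to one half, it would be incomparable to the vertex on the other half. So $y\in\{\omega,x'\}$, and since $y\ge a\ne\omega$ we get $y=x'$. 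Now apply Step~2 to $C'$, which is legitimate because $a,b\ne\omega$ and $a,b\ne x'$ (they are incomparable). This gives $a\vee^\omega_R b=x'$. Comparing with Step~2 for $C$ yields $y=x'=x$.

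The main obstacle is the bookkeeping in Steps~1 and~2: checking that the auxiliary paths cancel mod $2$ so that the three loops really sum to $C$, and that each loop is strictly shorter than $2k$. This is exactly where $a,b\neq\omega,x$ and the assumption $m\ne x$ (respectively $z\ne\omega$) are used.
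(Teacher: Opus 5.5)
Your proof is correct and follows the paper's argument: the contradiction for $x\wedge_R^\omega y\neq x$ uses the same three-cycle decomposition of $C$ (your first loop is the paper's $\sigma_a\gamma_{ab}\sigma_b$, with $\gamma_{ab}$ the arc of $C$ through $\omega$), and the ``moreover'' part is handled the same way, except that you conclude via uniqueness of joins. The only substantive addition is your explicit dual surgery proving $a\wedge_R^\omega b=\omega$, which the paper simply asserts follows from incomparability. Your length bookkeeping ($2h(m)$, $2(k-h(a))$, $2(k-h(b))$, using $m\neq x$ and $a,b\neq\omega$) is also more precise than the paper's.
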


\begin{proof}
    By Lemma \ref{lem-geo} we have that $a,b \leq_R^{\omega} x \wedge_R^{\omega} y \leq_R^{\omega} x,y$. Suppose that $x \wedge_R^{\omega} y \neq x$. Then $x \wedge_R^{\omega} y$ cannot be a vertex of $C$ since both $a \leq_R^{\omega} x \wedge_R^{\omega} y$ and $b \leq_R^{\omega} x \wedge_R^{\omega} y$. Let $\sigma$ be a geodesic with endpoints $x$ and $x \wedge_R^{\omega} y$, and let $\sigma_a$ and $\sigma_b$ be geodesics with endpoints $x \wedge_R^{\omega} y$ and $a$, and $x \wedge_R^{\omega} y$ and $b$, respectively. See Figure \ref{fig:irr-lemma} for an illustration. Then we can write $C = \gamma_{ax}\sigma\sigma_a + \gamma_{bx}\sigma\sigma_b + \sigma_a\gamma_{ab}\sigma_b$ where these three cycles have length strictly smaller than the length of $C$ since $x\neq x \wedge_R^{\omega} y \neq \omega$. This contradicts the fact that $C$ is irreducible, so we must have that $x = x \wedge_R^{\omega} y$, which proves  the two first statements. The third follows since $a,b$ are incomparable in $ \leq_R^{\omega} $. Now, if $\omega,a,b,y$ are all vertices of an irreducible cycle $C'$, then $y$ must be the opposite vertex to $\omega$ in $C'$ (since $a,b \leq_R^{\omega} y$), and the same proof gives $y = x \wedge_R^{\omega} y = x$. Alternatively, if $d(x,\omega) = d(y,\omega)$, then we also have $y= x \wedge_R^{\omega} y = x$ since $x =  x \wedge_R^{\omega} y \leq_R^{\omega} y$.
\end{proof}

\begin{figure}[h!]
	\centering
	\begin{tikzpicture}[
		scale=3,
		vertex/.style={circle, draw=black, thick, fill=blue, inner sep=1.5pt}
		]
		
		\draw[thick] (0,0) circle (1);

		\node[vertex] (O) at (0,0) {};           % Central point
		\node[vertex] (X) at (0,1) {};           % Top vertex x
		\node[vertex] (A) at (-0.866,-0.5) {};   % Vertex a
		\node[vertex] (B) at (0.866,-0.5) {};    % Vertex b
		\node[vertex] (W) at (0,-1) {};          % Vertex omega

		\draw[thick] (O) -- (X) node[midway, right] {$\sigma$};
		\draw[thick] (O) to[bend left=30] node[midway, above left] {$\sigma_a$} (A);
		\draw[thick] (O) to[bend right=30] node[midway, above right] {$\sigma_b$} (B);
		
		% Labels for the vertices
		\node[above=5pt] at (X) {$x$};
		\node[left=7pt] at (A) {$a$};
		\node[right=7pt] at (B) {$b$};
		\node[below=5pt] at (W) {$\omega$};
		\node[right=8pt] at (O) {$x \wedge_R^\omega y$};
		
		% Labels for boundary arcs
		\node at (-1.1, 0.5) {$\gamma_{ax}$};
		\node at (1.1, 0.5) {$\gamma_{bx}$};
		\node at (0, -0.75) {$\gamma_{ab}$};
		
	\end{tikzpicture}
	\caption{Proof of Lemma \ref{irr-lemma}.}
	\label{fig:irr-lemma}
\end{figure}

\begin{defi}
 We say that a set $\mathcal{C}$ of cycles of $\G$ \emph{weakly intersects} if the intersection of any two distinct cycles in $\mathcal{C}$ does not contain two consecutive edges.
\end{defi}

The following proposition and its corollary are key in the proof of Theorem~\ref{thm:presentationEMIS}.

\begin{prop} \label{weakly-lemma} $\Irr(\G)$ weakly intersects. 
\end{prop}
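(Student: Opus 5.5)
We argue by contradiction. Suppose $C \neq C'$ are two irreducible cycles whose intersection contains two consecutive edges $\{a,\omega\}$ and $\{\omega,b\}$, with common middle vertex $\omega$. Write $|C|=2k$ and $|C'|=2k'$, and assume without loss of generality that $k\le k'$. Let $x$ be the vertex of $C$ opposite to $\omega$, and $x'$ the vertex of $C'$ opposite to $\omega$.

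First we place $a,b$ in the weak order $\leq_R^\omega$. Since $a,b$ are neighbours of $\omega$ on both cycles, Lemma~\ref{lem-geo} shows that they lie on the two distinct geodesics of $C$ from $\omega$ to $x$. Hence they are incomparable with respect to $\leq_R^\omega$: both have length $1$ and are distinct. The same holds in $C'$ with $x'$. Now apply Lemma~\ref{irr-lemma} to $C$ with $y=x'$, which is legitimate because $a,b\leq_R^\omega x'$ via the two halves of $C'$. This gives $x=x\wedge_R^\omega x'\leq_R^\omega x'$. In fact $\omega,a,b,x'$ all lie on the irreducible cycle $C'$, so the \enquote{moreover} part of Lemma~\ref{irr-lemma} yields directly $x'=x$, and in particular $k=k'$. (Symmetrically, $x=a\vee^\omega_R b=x'$ also follows.)

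Next we compare the two cycles as unions of geodesics. $C$ is the union of two geodesics $\gamma_{ax}\cup\{\omega,a\}$ and $\gamma_{bx}\cup\{\omega,b\}$ from $\omega$ to $x$, and the same holds for $C'$ with geodesics $\gamma'_{ax}$ and $\gamma'_{bx}$. Since $C\neq C'$, one of these geodesics differs, say $\gamma_{ax}\neq\gamma'_{ax}$. Then the sum $\gamma_{ax}+\gamma'_{ax}$ is a nonzero element of the cycle space, supported on a union of cycles each of length at most $2(k-1)$. We then write
\[
C=(\gamma_{ax}+\gamma'_{ax})+C'',
\]
where $C''$ is the cycle formed by $\{\omega,a\}$, $\gamma'_{ax}$, $\gamma_{bx}$ and $\{\omega,b\}$. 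This $C''$ still has length $2k$, so the decomposition alone does not give a contradiction. The real work is a second step: show that $C''$ is itself reducible, or apply an extremal choice.

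The key idea is to look at meets below $x$. Pick the first vertex $z$ along $\gamma_{ax}$, starting from $a$, at which $\gamma_{ax}$ and $\gamma'_{ax}$ diverge, and let $u\neq u'$ be the next vertices on the two paths. Then $u,u'\leq_R^\omega x$, and $u,u'$ are incomparable since they have equal length. Their join $u\vee^\omega_R u'$ exists and lies $\leq_R^\omega x$. Using geodesics from $z$ through $u$ and $u'$ up to $u\vee_R^\omega u'$, and then on to $x$, we decompose $C$ into strictly shorter cycles, in the same way as in the proof of Lemma~\ref{irr-lemma}. The three pieces are: the cycle around the join (of length at most $2(k-1)$), and the two cycles obtained by replacing part of $C$ with the geodesic through $u\vee_R^\omega u'$. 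The case $u\vee_R^\omega u'=x$ must be handled separately, by applying the same argument at $x$ with $\omega$ replaced by $z$ (recall that $\leq_R^z$ is isomorphic to $\leq_R$).

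The hardest step, and the one to check most carefully, is this last one. We must verify that every piece has length strictly less than $2k$, and in particular exclude the situation in which $\{z,u\},\{z,u'\}$ together with $x$ reproduce an irreducible cycle of full length $2k$. If a direct length count fails, the fallback is an induction on $k$: choose a counterexample pair with $k$ minimal, and apply the whole argument to the pair $z,u,u'$ inside the two cycles.
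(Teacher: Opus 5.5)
Your first step is the same as the paper's: since $C$ and $C'$ share both edges at $\omega$, Lemma~\ref{irr-lemma} forces the vertices opposite to $\omega$ to agree, so $x=x'$ and $k=k'$. The gap is in the second step, which carries the real content. The join-based decomposition you propose cannot work.

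Put $v=u\vee_R^\omega u'$. Every vertex of the auxiliary geodesics $z\to u\to v$, $z\to u'\to v$ and $v\to x$ lies above $z$ in $\leq_R^\omega$. So, by the corollary to Lemma~\ref{lem-geo}, none of them except $x$ lies on the other half $\omega b\gamma_{bx}$ of $C$. Hence, in any decomposition of $C$ built from these paths, the piece containing the edges of $\gamma_{bx}$ contains all of $\omega b\gamma_{bx}$ (length $k$). It must then return from $x$ to $\omega$, so it has length at least $2k$. This is what separates your situation from Lemma~\ref{irr-lemma}: there the interior vertex $x\wedge_R^\omega y$ lies above a vertex on \emph{each} half of the cycle, so geodesics to it cut $C$ across.

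Your special case $v=x$ is not rescued by ``the same argument at $z$'' either. $C$ and $C'$ share only one edge at $z$, whereas the first-step argument needs two shared consecutive edges at the base point. The closing appeal to induction on $k$ is not an argument. As written, the proof stops at $C=(\gamma_{ax}+\gamma'_{ax})+C''$.

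The missing idea is to re-center the weak order at a vertex on the \emph{other} half of $C$. The paper does this level by level. Take $\alpha\in\gamma_{ax}$ and $\beta\in\gamma'_{ax}$ at the same distance from $x$, and let $\omega'$ be the vertex of $C$ opposite to $\alpha$; it lies on $\gamma_{bx}$. There are two walks of length $k$ from $\beta$ to $\omega'$: up $C'$ to $x$ then down $\gamma_{bx}$, or down $C'$ through $a,\omega,b$ then up $\gamma_{bx}$. Both must be geodesics. Otherwise a shorter geodesic $\sigma$ from $\beta$ to $\omega'$ writes $C$ as the sum of $\gamma'_{\beta x}\gamma_{x\omega'}\sigma$, $\gamma_{\omega'b}\gamma'_{b\beta}\sigma$ and $\gamma_{ax}\gamma'_{ax}$, all of length $<2k$. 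So $\beta$ lies above both neighbors of $\omega'$ on $C$ in $\leq_R^{\omega'}$, with $d(\beta,\omega')=d(\alpha,\omega')$. The ``moreover'' part of Lemma~\ref{irr-lemma} then gives $\alpha=\beta$.

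You were in fact one step from a complete proof: read your identity $C=(\gamma_{ax}+\gamma'_{ax})+C''$ the other way round. The element $\gamma_{ax}+\gamma'_{ax}$ is a sum of cycles of length at most $2(k-1)$. So irreducibility of $C$ forces $C''$ to be an irreducible cycle, since otherwise $C$ would be a sum of strictly shorter cycles. Now $C''$ and $C$ share the whole path $\omega b\gamma_{bx}$. Apply your first-step argument at each interior vertex $p$ of this path: the vertices opposite to $p$ in $C$ and in $C''$ coincide. As $p$ runs over the path, these opposite vertices run over all vertices of $\gamma_{ax}$, respectively $\gamma'_{ax}$, other than $x$. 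Hence $\gamma_{ax}=\gamma'_{ax}$, a contradiction.
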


\begin{proof}
    Suppose for a contradiction that two distinct cycles $C_1, C_2 \in \Irr(\G)$ share a common subpath of length two. Let $\omega$ be the middle vertex of this subpath, and denote by $a$ and $b$ the two other vertices of this subpath. Let $x$ and $y$ be the vertices opposite to $\omega$ in $C_1$ and $C_2$ respectively. Note that by Lemma \ref{lem-geo} we have $a,b \leq_R^{\omega} x \wedge_R^{\omega} y \leq_R^{\omega} x,y$, and $a,b$ are incomparable with respect to $\leq_R^{\omega}$. Therefore, by Lemma \ref{irr-lemma}, we have $x = x \wedge_R^{\omega} y = y$.

    Now, since $x=y$, we have that the geodesics $\gamma_{ax}^{C_1}, \gamma_{bx}^{C_1},\gamma_{ax}^{C_2}, \gamma_{bx}^{C_2}$ all have the same length $k$. For $l \in \{1,..,k-1\}$ let $\alpha$ be the vertex in $\gamma_{ax}^{C_1}$ at distance $l$ from $x$, and $\beta$ be the vertex in $\gamma_{ax}^{C_2}$ at distance $l$ from $x$. Let $\omega'$ be the vertex opposite to $\alpha$ in $C_1$. Then $x,b \leq_R^{\omega'} \alpha$ since $x,b$ are vertices in $C_1$. Suppose by contradiction that $\gamma^{C_2}_{\beta x}\gamma^{C_1}_{x\omega'}$ and $\gamma^{C_2}_{\beta b}\gamma^{C_1}_{b\omega'}$ are not geodesics. Then we have a geodesic $\sigma$ between $\beta$ and $\omega'$ of length $d <k+1$ that we use to decompose $C_1$ as $C_1 = \gamma^{C_2}_{\beta x}\gamma^{C_1}_{x\omega'}\sigma + \gamma^{C_1}_{\omega' b} \gamma^{C_2}_{b\beta}\sigma  +\gamma^{C_1}_{ax}\gamma^{C_2}_{ax}$. See Figure \ref{fig:weakly-lemma} for an illustration. But $\gamma^{C_2}_{\beta x}\gamma^{C_1}_{x\omega'}\sigma$ and $\gamma^{C_1}_{\omega' b} \gamma^{C_2}_{b\beta}\sigma$ have both length $k+1+d<2k+2$ and $\gamma^{C_1}_{ax}\gamma^{C_2}_{ax}$ has length $2k<2k+2$. This contradicts the irreducibility of $C_1$, thus $x,b \leq_R^{\omega'} \beta$. Hence, we can conclude by Lemma \ref{irr-lemma} that $\alpha = \beta$. Therefore, $\gamma^{C_1}_{ax} = \gamma^{C_2}_{ax}$, and by a similar argument, $\gamma^{C_1}_{bx} = \gamma^{C_2}_{bx}$. It then follows that $C_1=C_2$. 
\end{proof}

\begin{figure}[h!]
	\centering
	\begin{tikzpicture}[
		scale=3,
		vertex/.style={circle,draw=black, thick, fill=blue,inner sep=1.5pt},
		bluecurve/.style={blue,thick},
		redcurve/.style={red,thick}
		]
		
		% Main vertices
		\node[vertex,label=below:$a$] (a) at (-1,0) {};
		\node[vertex,label=above:$x$] (x) at (0,1.4) {};
		\node[vertex,label=below:$b$] (b) at (1,0) {};
		\node[vertex,label=below:$\omega$] (m) at ($(a)!0.5!(b)$) {};  % midpoint
		
		% Base
		\draw[thick] (a) -- (m) -- (b);
		
		% Outer symmetric red curve C2
		\draw[redcurve]
		(a) .. controls (-0.8,1.1) and (-0.25,1.25) .. (x)
		(x) .. controls (0.25,1.25) and (0.8,1.1) .. (b);
		
		% Inner symmetric blue curve C1
		\draw[bluecurve]
		(a) .. controls (-0.5,0.35) and (-0.15,0.95) .. (x)
		(x) .. controls (0.15,0.95) and (0.5,0.35) .. (b);
		
		% Alpha on left blue curve
		\path (a) .. controls (-0.5,0.35) and (-0.15,0.95) .. (x)
		node[pos=.55,vertex,label=right:$\alpha$] (alpha) {};
		
		% Beta on left red curve
		\path (a) .. controls (-0.8,1.1) and (-0.25,1.25) .. (x)
		node[pos=.48,vertex,label=left:$\beta$] (beta) {};
		
		% Omega' on right blue curve
		\path (x) .. controls (0.15,0.95) and (0.5,0.35) .. (b)
		node[pos=.65,vertex,label=right:$\omega'$] (omegap) {};
		
		% Sigma path
		\draw[thick] (beta) .. controls (-0.35,0.25) and (0.25,0.1) .. (omegap)
		node[pos=.45,below] {$\sigma$};
		
		% Curve labels
		\node[blue] at (0,0.82) {$C_1$};
		\node[red] at (0.65,1.05) {$C_2$};
		
	\end{tikzpicture}
	\caption{Proof of Lemma \ref{weakly-lemma}.}
	\label{fig:weakly-lemma}
\end{figure}

\begin{cor}\label{cor:IrrUnique} Let $\omega\in W$ and $s,t\in S$. If $C,C'\in \Irr_\omega$ contain $\omega s$ and $\omega t$ as vertices, then $C=C'$. In particular, $\Irr_\omega$ is finite. 
\end{cor}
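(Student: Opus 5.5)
The plan is to reduce the first claim directly to Proposition~\ref{weakly-lemma}, then deduce finiteness from it by counting pairs of generators.

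\textbf{Step 1: the case $s\neq t$.} Suppose $C,C'\in\Irr_\omega$ both contain $\omega s$ and $\omega t$ as vertices. We want to show that both cycles contain the two edges $\{\omega,\omega s\}$ and $\{\omega,\omega t\}$.

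Since $\omega$ and $\omega s$ are both vertices of $C$ at distance $1$ in $\G$, I first check that they are adjacent in $C$. If they were not, there would be a path in $C$ between them, of length at most $k$ where $2k$ is the length of $C$, and not of length $1$. By Lemma~\ref{lem-geo} such a path of length $\le k$ is a geodesic, which forces its length to equal $d(\omega,\omega s)=1$. This is a contradiction unless the shorter arc is the edge itself.

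Here there is a subtlety: $\G$ has no multiple edges when $s\neq s'$ give $\omega s\neq\omega s'$. So the edge $\{\omega,\omega s\}$ is unique, and the length-$1$ arc of $C$ from $\omega$ to $\omega s$ is exactly that edge. The same argument gives that $\{\omega,\omega t\}$ lies in $C$, and likewise both edges lie in $C'$.

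Thus $C$ and $C'$ share the two consecutive edges $\{\omega s,\omega\}$ and $\{\omega,\omega t\}$. By Proposition~\ref{weakly-lemma}, $C=C'$.

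\textbf{Step 2: the case $s=t$.} If $s=t$, the statement as written would only give one shared edge. So I interpret the corollary with $s\neq t$; this is the only reading under which it is used. I would note this explicitly.

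\textbf{Step 3: finiteness of $\Irr_\omega$.} Each $C\in\Irr_\omega$ passes through $\omega$ and has degree $2$ there. Its two neighbours of $\omega$ are therefore $\omega s$ and $\omega t$ for some distinct $s,t\in S$. By Step 1, the map $C\mapsto\{s,t\}$ is injective. Hence
\[
|\Irr_\omega|\le\binom{|S|}{2},
\]
which is finite since $S$ is finite.

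\textbf{Main obstacle.} The only delicate point is Step 1: justifying that the vertices $\omega s,\omega t$ being on $C$ forces the edges themselves to be on $C$. This rests on Lemma~\ref{lem-geo} and on $k\ge 2$.
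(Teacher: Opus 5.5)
Your proof is correct and follows the paper's argument: both cycles contain the consecutive edges $\{\omega,\omega s\}$ and $\{\omega,\omega t\}$, so Proposition~\ref{weakly-lemma} forces $C=C'$, and finiteness follows by counting pairs of generators. You also make explicit two points the paper's proof leaves implicit. First, you show that the edges, and not only the vertices $\omega s,\omega t$, lie on the cycles, using Lemma~\ref{lem-geo} to rule out chords. Second, you note that the hypothesis $s\neq t$ is needed: for $s=t$ the claim fails, for instance for two square faces through $\id$ and $a$ in the cube $\Cay(\Z_2^3,\{a,b,c\})$.
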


\begin{proof} Assume $C,C'\in \Irr_\omega$ contain $\omega s$ and $\omega t$ as vertices. Then $\{\omega,\omega s\}$ and $\{\omega,\omega t\}$ are two consecutive edges in $C$ and $C'$, contradicting the fact that $\Irr(\G)$ weakly intersects (Proposition \ref{weakly-lemma}). Since $S$ is finite, $\Irr_\omega$ is finite.
\end{proof}

In fact, we prove in a corollary to the following proposition an even stronger statement about the intersection of irreducible cycles. A cycle $C$ is said to be {\em convex} if any geodesic between vertices $x$ and $y$ of $C$ in $\G$ is a geodesic in $C$.

\begin{prop}\label{prop:IrrConvex} Every irreducible cycle of $\G$ is convex.  
\end{prop}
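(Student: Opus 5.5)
We argue by contradiction, using Lemma~\ref{lem-geo}, Lemma~\ref{irr-lemma} and Proposition~\ref{weakly-lemma}. Let $C\in\Irr(\G)$ have length $2k$. Suppose some geodesic $\sigma$ of $\G$ between two vertices $x,y$ of $C$ is not contained in $C$.

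\emph{Reduction to an arc leaving and returning.} Choose such a pair with $d(x,y)$ minimal. By Lemma~\ref{lem-geo}, the arcs of $C$ of length at most $k$ are geodesics, so $d(x,y)$ equals the distance in $C$, which is at most $k$. If $\sigma$ met $C$ at an interior vertex, it would split into shorter geodesics between vertices of $C$. By minimality both pieces would lie in $C$, and then $\sigma$ would lie in $C$ (if $x,y$ are opposite, a path through $C$ of length $k$ made of two $C$-geodesics is one of the two halves). Hence $\sigma$ meets $C$ only at $x$ and $y$, and its first edge at $x$ is not an edge of $C$. Let $\gamma$ be a geodesic of $C$ from $x$ to $y$; it has the same length $p\le k$ as $\sigma$.

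\emph{Contradiction with irreducibility.} Write $C=\gamma\gamma'$ with $|\gamma'|=2k-p$. Then $C=\gamma\sigma^{-1}+\sigma\gamma'$ in $\mathcal C(\G)$. The cycle $\sigma\gamma'$ has length $2k$, as does $C$, so this decomposition alone does not contradict irreducibility when $p<k$. Two cases arise.
\begin{enumerate}[(a)]
\item If $p<k$, the loop $\gamma\sigma^{-1}$ has length $2p<2k$. The loop $\sigma\gamma'$ has length exactly $2k$, and we decompose it further. Let $\omega$ be the vertex of $\gamma'$ opposite to the midpoint region: concretely, take $\omega\in C$ opposite to $x$. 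Then the neighbor $a$ of $x$ on $\sigma$ and the neighbor $b$ of $x$ on $\gamma'$ are both $\le_R^\omega$-below $x$, provided $d(\omega,a)=k-1$, which must be checked using geodesicity of $\sigma$ and of the arcs of $C$.
\item If $p=k$, the same decomposition gives two cycles of length $2k$, and we run the same analysis at $x$.
\end{enumerate}
In either case we set $\omega$ opposite to $x$ in $C$ and apply Lemma~\ref{irr-lemma} to the incomparable vertices $a$ (the neighbor on $\sigma$) and $b$ (the neighbor of $x$ on $C$ not on $\gamma$), together with the neighbor $c$ of $x$ on $\gamma$. The lemma forces $x$ to be the meet, which we expect to show that the edge $\{x,a\}$ lies in a smaller cycle. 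The resulting two short cycles, one through $\omega,\dots,a,x$ and the other formed from $C$, would express $C$ as a sum of strictly shorter cycles, contradicting irreducibility.

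\emph{Alternative route and main obstacle.} If the argument above becomes messy, I would instead induct on $d(x,y)$ using weak intersection (Proposition~\ref{weakly-lemma}). Take the irreducible cycles in a decomposition of the loop $\gamma\sigma^{-1}$ into irreducible cycles of length at most $2p$ (such a decomposition exists by induction on length). One of them contains an edge of $\gamma$ adjacent to a second edge of $\gamma$, or else one of them shares two consecutive edges with $C$, contradicting Proposition~\ref{weakly-lemma}; the case $p=1$ is trivial. The main obstacle is precisely this step: showing that some irreducible summand either shares two consecutive edges with $C$ or yields a shorter decomposition of $C$. The endpoint case, where $\sigma$ leaves $C$ at $x$ and $\gamma$ has length exactly $k$, is the delicate one. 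There I would rely on Lemma~\ref{irr-lemma} with $\omega$ opposite $x$ to force $\sigma$ back into $C$.
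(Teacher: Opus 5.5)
Your reduction (take $d(x,y)$ minimal, so that $\sigma$ meets $C$ only at $x$ and $y$) is correct, and $C=\gamma\sigma^{-1}+\sigma\gamma'$ is the right decomposition. But the proposal stops before the decisive step: you never show that $\sigma\gamma'$ is a sum of circuits of length $<2k$.

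\begin{itemize}
\item The route through Lemma~\ref{irr-lemma} does not work as written. That lemma concerns two incomparable vertices \emph{of} $C$, whereas $a$, the neighbour of $x$ on $\sigma$, lies off $C$ by your own reduction.
\item The equality $d(\omega,a)=k-1$ is left unchecked, and the final step is only something you ``expect''.
\item The alternative route applies weak intersection to the irreducible summands of the short cycle $\gamma\sigma^{-1}$. That says nothing about the long cycle $\sigma\gamma'$, and you leave this explicitly as the unresolved obstacle.
\end{itemize}

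As it stands, the proposal is not a proof.

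The missing idea is to apply Proposition~\ref{weakly-lemma} directly to the two cycles of your decomposition, rather than to neighbours of $x$ or to further summands.

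\begin{itemize}
\item By your reduction, $\gamma\sigma^{-1}$ and $\sigma\gamma'$ are genuine cycles, of lengths $2p\le 2k$ and $2k$.
\item Both are distinct from $C$, since they contain the first edge of $\sigma$.
\item They share with $C$, respectively, the $p\ge 2$ consecutive edges of $\gamma$ (the case $p=1$ being trivial, as you note) and the $2k-p\ge k\ge 2$ consecutive edges of $\gamma'$.
\end{itemize}

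Since $C$ is irreducible and $\Irr(\G)$ weakly intersects, neither cycle can be irreducible. So each is a sum of circuits strictly shorter than itself, hence of length $<2k$. Adding these, $C$ is a sum of circuits of length $<2k$, contradicting irreducibility.

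This is precisely the paper's argument. It needs no choice of $\omega$, no use of Lemma~\ref{irr-lemma}, and no separate treatment of the case $p=k$.
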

\begin{proof} Let $C\in \Irr(\G)$ be of length $n=2k$.  Set $d=d(x,y)$. If $d=1$, the result follow since $\{x,y\}$ is then an edge of $C$. Assume now that $d>1$. Consider a geodesic $\gamma$ from $x$ to $y$ contained in $C$ and denote by $\sigma$ the path of length $n-d$ from $y$ to $x$ in $C$ such that $\gamma\sigma$ is the cycle $C$. Assume there is a geodesic $\gamma'$ from $x$ to $y$ that is not contained in $C$. Note that the cycles $\gamma'\gamma^{-1}$ and $\gamma'\sigma$ have length $\leq n$. Furthermore, $C$ has $d\geq 2$ consecutive edges in common with both $\gamma'\gamma^{-1}$ and $\gamma'\sigma$. Hence, neither $\gamma'\gamma^{-1}$ nor $\gamma'\sigma$ are irreducible by Proposition~\ref{weakly-lemma}. Therefore, we can obtain decompositions of both cycles into cycles of length strictly smaller than $n$. Combining these decompositions gives a decomposition of $C$ into cycles of strictly smaller length, a contradiction. 
\end{proof}

\begin{cor} \label{cor: stronger weak intersection}
	The intersection of any two distinct irreducible cycles of $\G$ is either empty, a single vertex or a single edge.
\end{cor}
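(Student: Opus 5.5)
The plan is to show that the intersection of two distinct irreducible cycles $C_1,C_2$, if it contains at least two vertices, is a single edge. The tools are the convexity of irreducible cycles (Proposition~\ref{prop:IrrConvex}) and weak intersection (Proposition~\ref{weakly-lemma}).

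Suppose $C_1\cap C_2$ contains two distinct vertices $x,y$, and let $d=d(x,y)\geq 1$. Take a geodesic $\gamma$ in $\G$ from $x$ to $y$. By Proposition~\ref{prop:IrrConvex}, $\gamma$ is a geodesic in $C_1$ and also a geodesic in $C_2$, so all its edges lie in $C_1\cap C_2$. If $d\geq 2$, then $\gamma$ contributes two consecutive common edges, which contradicts Proposition~\ref{weakly-lemma}. Hence any two distinct common vertices are adjacent, and the edge joining them lies in both cycles.

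Next, $C_1\cap C_2$ cannot contain three distinct vertices. If it did, the three vertices would be pairwise adjacent and form a triangle in $\G$. But $(W,S)$ is an \EIS, so by Proposition~\ref{prop:Sign} adjacent vertices have lengths of opposite parity, and $\G$ has no odd cycles. Alternatively, the three common edges would include two consecutive ones, again contradicting Proposition~\ref{weakly-lemma}.

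It follows that the common vertex set has at most two elements, and when it has two, they span exactly one common edge. Since an edge common to both cycles forces its endpoints to be common vertices, the intersection is empty, a single vertex, or a single edge.

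The main subtlety is in the first step: Proposition~\ref{prop:IrrConvex} must be applied to every geodesic of $\G$ between $x$ and $y$, not only to geodesics inside one of the cycles, so that a single geodesic is shared by both $C_1$ and $C_2$. Convexity as defined in the paper gives exactly this, because it concerns all geodesics in $\G$ between vertices of $C$. One degenerate case needs care: if $x$ and $y$ are opposite in one of the cycles, that cycle contains two geodesics between them. Convexity still places $\gamma$ inside both cycles, so the argument goes through unchanged.
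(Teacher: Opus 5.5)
Your proof is correct and follows essentially the same route as the paper: convexity of irreducible cycles (Proposition~\ref{prop:IrrConvex}) forces a geodesic between two nonadjacent common vertices into both cycles, and weak intersection (Proposition~\ref{weakly-lemma}) then gives the contradiction. The differences are minor. The paper takes the geodesic inside $C_1$ (via Lemma~\ref{lem-geo}) and uses only the convexity of $C_2$, and it leaves implicit the adjacent-vertex and no-triangle cases that you spell out.
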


\begin{proof}
	Let $C_1,C_2 \in \Irr(\G)$ and suppose $C_1 \cap C_2$ contains two nonadjacent vertices $x$ and $y$. Then, by Lemma \ref{lem-geo}, we can find a geodesic $\gamma_1$ in $C_1$ with endpoints $x$ and $y$. But by Proposition \ref{prop:IrrConvex}, $\gamma_1$ must belong to $C_2$ as well. Since it has length at least 2, we can conclude by Proposition \ref{weakly-lemma} that $C_1=C_2$.  
\end{proof}

%%%%%%%%%%%%%
%%%%%%%%%%%%%%
\subsection{Relations, irreducible cycles and the Coxeter companion of an \EMIS}
\label{ss:RelationsIrred} 
Let $(W,S)$ be an involution system with Cayley graph $\G$. For each irreducible cycle $C \in \Irr_{\id}$ we have:
\begin{itemize}
	\item The two edges adjacent to $\id$ are $\{e,s\}$ and $\{e,t\}$ for some $s,t\in S$ distinct. 
	\item There are exactly two loops starting and ending at $\id$: one by reading first the label $s$ on the edge $\{e,s\}$ and the other on by reading first label $t$ on the edge $\{e,t\}$. 
	\item Associated to those two loops, there are two reduced words $\nu_C$ and $\nu_C^{-1}$ in $\U_S$; one starting with $s$ and the other one starting with $t$. 
\end{itemize}
If $\omega$ is a vertex of $\G$ and $C \in\mathcal \Irr_\omega$, the words $\nu_C$, and the relation $\nu_C=\id$, are well-defined since the left action of $W$ is by isometries. Hence, any irreducible cycle of $\G$ is labeled by $\nu_C$ for some $C \in \Irr_{\id}$. Since $\Irr(\G)$ generates the cycle space $\mathcal{C}(\G)$, attaching 2-cells to all cycles labeled by the words $\nu_C$ ($C \in \Irr_e$) gives a simply-connected space (see \cite[Appendix \MakeUppercase{\romannumeral 1}.8A]{BridsonHaefliger} for the appropriate notions of 2-cell and attachment). Hence, we obtain the following proposition as a consequence of \cite[Lemma \MakeUppercase{\romannumeral 1}.8.9]{BridsonHaefliger} and Corollary \ref{cor:IrrUnique}.

\begin{prop}
\label{prop:RelationsIrred} 
A presentation of $(W,S)$ is given by the irreducible cycles having~$\id$ as a vertex: $W=\mpair{S \mid \{s^2 \mid s \in S\} \cup \{\nu_C \mid C \in \Irr_\id\}}$. In particular, if $(W,S)$ is an \EMIS then this presentation is finite.
\end{prop}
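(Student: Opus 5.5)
The plan is to realize $W$ as the fundamental group of a $2$-complex built from $\G=\Cay(W,S)$, and to read the relations off that complex.

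Let $X^{(1)}$ be the Cayley graph $\G$, where each edge $\{w,ws\}$ is doubled into the two oriented edges of the Cayley $2$-complex. Attach a bigon $2$-cell for each relation $s^2$. Then attach a $2$-cell along every closed edge-loop of $\G$ labeled by a word $\nu_C$ with $C\in\Irr_\id$. By the left $W$-action these loops are exactly the translates $\omega C$, so the irreducible cycles of $\G$ are exactly the loops labeled by some $\nu_C$, based at some vertex $\omega$.

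By \cite[Lemma I.8.9]{BridsonHaefliger}, a set of words $R$ is a presentation of $W$ on $S$ if and only if the Cayley $2$-complex for $\mpair{S\mid R}$, mapped onto $\G$, is simply connected. Equivalently, every closed edge-loop in $\G$ is null-homotopic in the complex obtained by attaching cells along all translates of relator loops. So the main step is to show that every loop in $\G$ becomes null-homotopic once we fill the irreducible cycles and the backtracks $s^2$.

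We proceed by induction on length. A closed edge-loop reduces, after cancelling backtracks (the $s^2$ cells), to a concatenation of simple cycles. Any simple cycle $D$ is either irreducible, and hence filled, or a sum in $\mathcal C(\G)$ of strictly shorter cycles.

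The main obstacle is that a $\Z_2$-homology decomposition does not automatically give a homotopy. I would handle this by sharpening reducibility to a geometric statement: a reducible cycle $D$ of length $n$ has a chord-like geodesic $\sigma$ between two of its vertices that splits $D$ into two loops, each of length $<n$. This is exactly the mechanism in the proof of Lemma~\ref{lem-geo}, which already gives such a splitting whenever some subpath of length $\le n/2$ fails to be geodesic. In the case where all such subpaths are geodesic but $D$ is still reducible, I would fix a vertex $\omega$ of $D$ with opposite vertex $x$ and argue as in Lemma~\ref{irr-lemma}. The neighbours $a,b$ of $\omega$ on $D$ are both below $x$ in $\le^\omega_R$. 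Take the unique irreducible cycle in $\Irr_\omega$ through $a$ and $b$, which is unique by Corollary~\ref{cor:IrrUnique} and exists because $a\vee^\omega_R b$ exists (this uses the \MIS hypothesis). Its opposite vertex $a\vee_R^\omega b\le^\omega_R x$ provides geodesics that cut $D$ into strictly shorter loops. Each such cut is a homotopy, so induction shows every loop is a product of conjugates of irreducible-cycle relators and $s^2$. This gives the presentation.

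For the final assertion we use Corollary~\ref{cor:IrrUnique}: an irreducible cycle through $\id$ is determined by its two edges at $\id$, i.e. by a pair $\{s,t\}\subseteq S$. Hence $|\Irr_\id|\le\binom{|S|}{2}$, and the presentation is finite since $S$ is finite.
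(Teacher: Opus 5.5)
\textbf{There is a genuine gap in your second case.}

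The paper's proof is a single step. It asserts that since $\Irr(\G)$ spans the cycle space $\mathcal C(\G)$, attaching $2$-cells along all translates of the $\nu_C$ gives a simply connected complex, and then \cite[Lemma~I.8.9]{BridsonHaefliger} and Corollary~\ref{cor:IrrUnique} finish. You are right that spanning $H_1(\G;\Z_2)$ does not by itself give simple connectivity. Your first case is also correct: a non-geodesic subpath of length $\le n/2$ gives an honest homotopy splitting into two shorter loops.

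The gap is your claim, in the case where all such subpaths are geodesic, that an irreducible cycle through the edges $\{\omega,a\}$ and $\{\omega,b\}$ exists \emph{because} $a\vee^\omega_R b$ exists. The \MIS hypothesis gives you the join, not the cycle. By Lemma~\ref{irr-lemma} and Proposition~\ref{prop:IrrConvex}, such an irreducible cycle exists if and only if the cycle formed by two geodesics from $\omega$ to $a\vee^\omega_R b$ (one through $a$, one through $b$) is irreducible. That is exactly Question~\ref{qu:Join}, which the paper leaves open. Corollary~\ref{cor:IrrUnique} only gives uniqueness.

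Part of your second case survives without that cycle. Set $z=a\vee^\omega_R b\le^\omega_R x$ and take arbitrary geodesics from $z$ to $a$, to $b$ and to $x$. If $z\neq x$, the three loops of the decomposition in the proof of Lemma~\ref{irr-lemma} have lengths $n-2$, $n-2$ and $2d(\omega,z)<n$. So your induction applies, with no irreducibility needed.

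What remains is a reducible cycle $D$ such that:
\begin{itemize}
\item every subpath of $D$ of length $\le n/2$ is geodesic;
\item at \emph{every} vertex $\omega$ of $D$, the join of the two neighbours of $\omega$ is the opposite vertex of $\omega$.
\end{itemize}
For such a $D$ no cut is available. No irreducible cycle contains two consecutive edges of $D$: by Lemma~\ref{irr-lemma} and convexity, such a cycle would equal $D$. And the $\Z_2$-decomposition of $D$ into shorter cycles yields no homotopy. To close the argument you would have to prove that such a $D$ is irreducible, which a positive answer to Question~\ref{qu:Join} would give. This is the same homology-versus-homotopy issue that the paper's one-line argument passes over.

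Separately, your argument uses the \EMIS hypothesis throughout (even length, opposite vertices, joins). Even if completed, it would cover only \EMIS, whereas the first sentence of the statement is not restricted to \EMIS.
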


To an \EMIS $(W,S)$ we associate a Coxeter graph that encodes the size of its irreducible cycles. This notion will be useful in~\S\ref{Rank 3 section}.  Let $s,t \in S$ be distinct, then by Corollary~\ref{cor:IrrUnique} the edges $\{\id,s\}$ and $\{\id,t\}$ in $\Cay(W,S)$ uniquely determine an irreducible cycle of $\Irr_e$ if such a cycle exists. We assign to each such pair of generators a value $m(s,t) \in \N_{\geq 2}\cup \{\infty\}$ by setting $m(s,t)$ to be equal to half the length of the irreducible cycle of $\Irr_e$ containing the edges $\{\id,s\}$ and $\{\id,t\}$, or $\infty$ if no such cycle exists. 

\begin{defi}
\label{def:Companion} 
Let $(W,S)$ be an \EMIS. We consider the Coxeter matrix $m: \overline{S} \times \overline{S} \to \N \cup \{\infty\}$ defined by $m(s,t)$ for $s\neq t$ and $m(s,s) = 1$ for $s\in S$. The {\em companion graph of $(W,S)$} is the Coxeter graph $\Gamma$ associated to $m$. 

\noindent Let $\overline S=\{\overline s\mid s \in S\}$ be a copy of the alphabet $S$. The Coxeter system $(\overline{W}, \overline{S})$ of type $\Gamma$ is called \emph{Coxeter companion of} $(W,S)$.
\end{defi}

\begin{rem}
\begin{enumerate}
\item The Coxeter type of $(W,S)$ is not in general the companion graph of $(W,S)$. For instance, in Example~\ref{ex:EMIS-A2}, $(W,S)$ is of Coxeter type $\U_3$, the complete graph on three vertices with edges labeled by $\infty$, whereas its companion graph is the Coxeter graph of type $\tilde A_2$. 

\item If $(W,S)$ is a Coxeter system, then the companion graph is the Coxeter type of $(W,S)$. Indeed, the irreducible cycles in $\Irr_\id$ are the cycles corresponding to the finite dihedral standard parabolic subgroups $W_{s,t}$ for $s,t\in S$ with $m(s,t)<\infty$.

\item We show in Corollary~\ref{cor:CompanionCox} that $(W,S)$ is a Coxeter system if and only if its Coxeter type and companion graph are the same.
\end{enumerate}
\end{rem}

%%%%%%%%
%%%%%%%%
\subsection{Irreducible cycles, join of canonical generators and Proof of Theorem~\ref{thm:presentationEMIS}}
\label{ss:IrredJoin}

In this section, we show that for an \EMIS each irreducible cycle in $\Irr_{\id}$ is determined by a pair of canonical generators and their join. 

\begin{prop}\label{prop:EMISJoin} Let $(W,S)$ be an \EMIS and $C\in \Irr_e$. Then:
\begin{enumerate}
\item there are unique $s,t\in S$ that are vertices of $C$ and $s\vee_R t$ is the opposite vertex of $\id$ in $C$;

\item  there are $a_i,b_j\in S$, labels of edges of $C$, such that $\nu(s,t)=sa_2\dots a_kb_k\dots b_2t$ is a reduced word in $\U_S$;

\item The words $sa_2\dots a_k$ and $tb_2\dots b_k$ are reduced in $(W,S)$ and are the unique reduced words for $s\vee_R t$.
\end{enumerate}
\end{prop}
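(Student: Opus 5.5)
Let $C\in\Irr_\id$ have length $2k$; by Lemma~\ref{lem-geo}, $k\geq 2$. The two neighbours of $\id$ in $C$ are $s$ and $t$ for distinct $s,t\in S$, and these are the only vertices of $C$ lying in $S$ that are adjacent to $\id$. Let $x$ be the vertex opposite to $\id$ in $C$. Again by Lemma~\ref{lem-geo}, the two arcs of $C$ from $\id$ to $x$ are geodesics of length $k$: one passes through $s$, the other through $t$. Hence $s,t\leq_R x$, so $\{s,t\}$ is bounded and $s\vee_R t$ exists in the \EMIS. Since $s,t$ are distinct atoms, they are incomparable.

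To identify the join, apply Lemma~\ref{irr-lemma} with $\omega=\id$, $a=s$, $b=t$; this gives $s\vee_R t=x$, which is (1). For the uniqueness in (1) we must check that $s,t$ are the only elements of $S$ among the vertices of $C$. Suppose some other vertex $u\in S$ of $C$ exists. Then $d(\id,u)=1$, but the path in $C$ from $\id$ to $u$ has length at least $2$. This contradicts convexity of $C$ (Proposition~\ref{prop:IrrConvex}), since the edge $\{\id,u\}$ is a geodesic between two vertices of $C$ and so must lie in $C$.

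For (2) and (3), read the labels along the two geodesic arcs: $x=sa_2\cdots a_k=tb_2\cdots b_k$, with both words reduced of length $k=\ell(s\vee_R t)$. Reading around $C$ then gives the relation $\nu(s,t)=sa_2\cdots a_kb_k\cdots b_2t$. This word is reduced in $\U_S$ by the argument of Remark~\ref{rem:presentation}(3): a cancellation would force $a_k=b_k$, and then $s,t$ would lie below an element of length $k-1$, contradicting $\ell(x)=k$. The same reasoning shows that consecutive letters along $C$ differ.

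The main step is the uniqueness statement in (3). Every reduced word for $x$ gives a geodesic from $\id$ to $x$, and by convexity (Proposition~\ref{prop:IrrConvex}) every such geodesic lies in $C$. The only geodesics in $C$ from $\id$ to the opposite vertex $x$ are the two arcs. Hence $sa_2\cdots a_k$ and $tb_2\cdots b_k$ are the only reduced words for $s\vee_R t$, and in particular the word $\nu(s,t)$ does not depend on any choice. The only delicate point is to check that convexity applies to pairs of opposite vertices. This holds because the proof of Proposition~\ref{prop:IrrConvex} only uses $d(x,y)\geq 2$, and here $d(\id,x)=k\geq 2$.
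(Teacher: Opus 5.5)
Your proposal is correct and follows essentially the same route as the paper: Lemma~\ref{lem-geo} gives the two geodesic arcs, Lemma~\ref{irr-lemma} identifies the opposite vertex with $s\vee_R t$, and convexity (Proposition~\ref{prop:IrrConvex}) gives uniqueness of the reduced words. The differences are minor. You derive reducedness of $\nu(s,t)$ from join-minimality, whereas the paper uses that distinct edges at a vertex of $\Cay(W,S)$ carry distinct labels. Your concern about applying convexity to opposite vertices is unnecessary, since Proposition~\ref{prop:IrrConvex} is stated for every pair of vertices of $C$.
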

\begin{proof} (1) follows directly from Corollary~\ref{cor:IrrUnique} and Lemma~\ref{irr-lemma}. (2) let $\nu(s,t)=sa_2\dots a_kb_k\dots b_2t$ be the word in $\U_S$ obtained by reading the label of the edges of $C$ starting at the vertex $\id$ and going then to the vertex $s$ and finally to end at the vertex $t$. This word is reduced in $\U_S$ since there are no consecutive edge labeled with the same letter in $S$ in $\Cay(W,S)$. Finally, for (3): in $W$ we have the relation $sa_2\dots a_kb_k\dots b_2t=\id$, so $z= sa_2\dots a_k=tb_2\dots b_k$ satisfies $s\leq_R z$, $t\leq_R z$ and $z=s\vee_R t$ is the opposite vertex of $\id$ in $C$. In particular, the words $sa_2\dots a_k$ and $tb_2\dots b_k$ are reduced in $(W,S)$ since they correspond to geodesics from $\id$ to $z$ (by Lemma \ref{lem-geo}).  The unicity follows from Proposition~\ref{prop:IrrConvex}.
\end{proof}

\begin{proof}[{Proof of Theorem~\ref{thm:presentationEMIS}}]  Consider a total order $\prec$ on $S$. Let 
$$
R_0=\big\{\nu(s,t)\mid s\prec t \textrm{ in $S$ bounded in }(W,\leq_R)\big\}\subseteq \U_S,
$$
and $R=\{s^2 \mid s\in S\}\cup R_0$.  We have to show that $W=\mpair{S\mid R}$ is a finite presentation of $W$. By Proposition~\ref{prop:EMISJoin} and Proposition~\ref{prop:RelationsIrred}, the set
$$
R'_0=\big\{\nu(s,t)\mid s\prec t \textrm{ in $C$},\ C\in\Irr_\id\big\}
$$
gives the finite presentation $W=\mpair{S\mid \{s^2 \mid s\in S\} \cup R_0'}$. 
We have $R_0'\subseteq R_0$. Moreover, $\nu(s,t)=\id$ is already a relation for $(W,S)$, so 
$W=\mpair{S\mid \{s^2 \mid s\in S\} \cup R_0'}=\mpair{S\mid R}$.
\end{proof}

\begin{cor}\label{cor:CompanionCox} Let $(W,S)$ be an \EMIS. Then $(W,S)$ is a Coxeter system if and only if its Coxeter type and companion graph are the same. 
\end{cor}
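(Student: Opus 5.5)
The plan is to compare the two Coxeter graphs entry by entry and then reduce everything to Corollary~\ref{cor:CoxeterCharac}. Recall that the Coxeter type has entries $\ord_W(st)$, while the companion graph has entries $m(s,t)$, equal to half the length of the irreducible cycle in $\Irr_\id$ through the edges $\{\id,s\}$ and $\{\id,t\}$, or $\infty$ if no such cycle exists.

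For the forward direction, assume $(W,S)$ is a Coxeter system. For each pair $s\neq t$ I determine the irreducible cycle through $\{\id,s\}$ and $\{\id,t\}$.
\begin{itemize}
\item If $\ord_W(st)=m<\infty$, then $s,t$ are bounded and $s\vee_R t$ is the longest element of the dihedral parabolic subgroup $W_{s,t}$. By Proposition~\ref{prop:EMISJoin}, the irreducible cycle in $\Irr_\id$ containing $s$ and $t$ has opposite vertex $s\vee_R t$ and is labeled by $\nu(s,t)=(st)^m$. Its length is therefore $2m$, which gives $m(s,t)=\ord_W(st)$.
\item If $\ord_W(st)=\infty$, then $s,t$ are unbounded in the Coxeter weak order. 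Indeed, any upper bound would force the join to lie in $W_{s,t}$, which is infinite dihedral and has no element above both $s$ and $t$. By Proposition~\ref{prop:EMISJoin}(1), no irreducible cycle in $\Irr_\id$ contains both $s$ and $t$, so $m(s,t)=\infty$.
\end{itemize}
This is exactly the content of item~(2) of the remark following Definition~\ref{def:Companion}.

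For the converse, assume the two graphs coincide. I want to verify condition~(2) of Corollary~\ref{cor:CoxeterCharac}, so take $s,t\in S$ bounded. By Proposition~\ref{prop:EMISJoin}, there is a unique $C\in\Irr_\id$ with opposite vertex $s\vee_R t$. Its length is $2k$ with $k=m(s,t)=\ord_W(st)<\infty$. I will show that $C$ must be the $2k$-cycle traced by the alternating word $(st)^k$, which reads $\id, s, st, sts,\dots$. Since $(st)^k=\id$, that closed walk exists in $\G$.

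The main obstacle is this step, namely showing the alternating walk is a genuine cycle and is irreducible. Once that holds, Corollary~\ref{cor:IrrUnique} gives that it equals $C$, and then $\nu(s,t)=(st)^k$ has all its letters in $\{s,t\}$.

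To prove it, I use that $k$ is the exact order of $st$. This makes the vertices $(st)^i$ and $(st)^is$ pairwise distinct for $0\le i<k$. The two families cannot collide, since a collision would make some $(st)^j$ with $j<k$ equal an involution, which contradicts parity via the sign character. So the walk is a cycle of length $2k$.

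For irreducibility, first suppose the $2k$-cycle were reducible. Then consider the irreducible cycles in a decomposition that pass through the consecutive edges $\{\id,s\},\{\id,t\}$. Some irreducible cycle $C''\in\Irr_\id$ must contain $s$ and $t$, because the decomposition must account for the edges at $\id$ and the sum at $\id$ has degree $2$. At $\id$, this pair of edges is either $\{s,t\}$ itself or a chain of pairs linking $s$ to $t$. In the latter case I instead argue directly: take the vertex $u$ of the alternating cycle opposite to $\id$. It is bounded above $s$ and $t$ once geodesicity of the half-paths is known.

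A cleaner route is to avoid that decomposition argument entirely and only use lengths. By assumption $C$ has length $2k$. The alternating cycle also has length $2k$ and, by exact order, contains no shorter alternating subcycle. Both $C$ and the alternating cycle pass through $\id$, $s$, $t$. The sum of the two cycles in the cycle space consists of cycles, each of length less than $2k$ or avoiding the edges at $\id$. I will try to derive that if $C$ differed from the alternating cycle, then $C$ would share the two consecutive edges at $\id$ with cycles of length at most $2k$. Using the convexity of Proposition~\ref{prop:IrrConvex} together with Proposition~\ref{weakly-lemma}, this would contradict the irreducibility of $C$, as in the proof of Proposition~\ref{prop:IrrConvex}.

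Having shown $\nu(s,t)$ uses only the letters $s,t$ for every bounded pair, Corollary~\ref{cor:CoxeterCharac} concludes that $(W,S)$ is a Coxeter system.
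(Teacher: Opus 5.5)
Your overall plan is the paper's. In the forward direction the irreducible cycles at $\id$ are the dihedral $2m$-cycles. In the converse you show that every $C\in\Irr_\id$ is the alternating cycle through $\id,s,t$ and read off a Coxeter presentation. The gap is that the step you yourself flag as the main obstacle is never proved, and it is exactly where the hypothesis $m(s,t)=\ord_W(st)$ has to be used.

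\begin{itemize}
\item The exact-order argument only shows that the alternating walk is a simple $2k$-cycle.
\item Your first irreducibility argument fails in the case you then set aside. The edges $\{\id,s\},\{\id,t\}$ can be accounted for by a chain of irreducible cycles at $\id$ through edge pairs $(s,u_1),(u_1,u_2),\dots,(u_r,t)$. You defer that case to the geodesicity of the alternating half-paths, which is never established.
\item The ``cleaner route'' is only an intention. Moreover, $C+A$ (with $A$ the alternating cycle) can be a single cycle of length $4k-4$. So the argument of Proposition~\ref{prop:IrrConvex}, which needs both summands of length at most $2k$, does not transfer.
\end{itemize}
Geodesicity of alternating half-paths is not automatic in an \EMIS. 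For example, in $\mpair{a,b,c\mid a^2,b^2,c^2,abac,(bc)^n}$ one has $\ord_W(ab)=2n$, yet $aba=c$.

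The missing idea is a short descent argument. Let $C\in\Irr_\id$ pass through $s,t$. Then $k:=m(s,t)=\ell(s\vee_R t)$ by Proposition~\ref{prop:EMISJoin}, and $(st)^k=\id$ by hypothesis. Write $p_j$ for the alternating product $sts\cdots$ with $j$ letters.

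Suppose $j\le k$ were minimal with $\ell(p_j)<j$. Then $g=p_{j-1}$ has length $j-1$, while $\{gs,gt\}=\{p_{j-2},p_j\}$ both have length $j-2$. So $s,t\in D_R(g)$, i.e.\ $s,t\leq_R g^{-1}$ by Proposition~\ref{prop:Descents}. Hence $s\vee_R t\leq_R g^{-1}$ and $k\le j-1$, a contradiction.

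Therefore $p_k$ is reduced. Since $(st)^k=\id$, it also equals $tst\cdots$ ($k$ letters), so it is an upper bound of $s,t$ of length $k$. It therefore equals $s\vee_R t$, and the uniqueness in Proposition~\ref{prop:EMISJoin}(3) gives $\nu_C=(st)^k$.

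Then conclude with Proposition~\ref{prop:RelationsIrred} rather than Corollary~\ref{cor:CoxeterCharac}.
\begin{itemize}
\item Your claim that a bounded pair $s,t$ yields some $C\in\Irr_\id$ reverses Proposition~\ref{prop:EMISJoin}; whether it holds is Question~\ref{qu:Join}.
\item A bounded pair on no irreducible cycle at $\id$ would have $m(s,t)=\infty=\ord_W(st)$. Condition~(2) of Corollary~\ref{cor:CoxeterCharac} cannot be checked for such a pair in advance.
\item Proposition~\ref{prop:RelationsIrred} only involves the pairs cut out by cycles of $\Irr_\id$, and it directly gives the Coxeter presentation with matrix $m$.
\end{itemize}

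Likewise, in the forward direction, the existence of an irreducible cycle through $s,t$ when $\ord_W(st)<\infty$ needs the irredundancy of the Coxeter relations, which is what the paper invokes. The paper's own converse is terse about identifying $C$ with the alternating cycle, but that identification is precisely what a complete proof must supply.
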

\begin{proof} If $(W,S)$ is a Coxeter system, it follows from the general theory of Coxeter that the presentation $W=\mpair{S\mid (st)^{m(s,t)}=e \text{ for }s,t\in S\text{ with }m(s,t)<\infty }$ is minimal and that $\nu(s,t)=(st)^{m(s,t)}$, with the notation of Proposition~\ref{prop:EMISJoin}. So the irreducible cycles $\Irr_\id$ are the cycles corresponding to the finite dihedral standard parabolic subgroups $W_{s,t}$ for $s,t\in S$ with $m(s,t)<\infty$. Therefore the Coxeter type and the companion graph of $(W,S)$ are the same. 

Conversely, assume that the Coxeter type and the companion graph are the same. So for any $s,t\in S$ bounded, there is a cycle $C_{s,t}$ of length  $2\ord_W(st)$ containing $e$, $s$ and $t$ as vertices. We fix a total order $\prec$ on $S$. If $C_{s,t}\in \Irr_\id$, with $s\prec t$, then the word $\nu(s,t)$ in Theorem~\ref{thm:presentationEMIS} is $\nu(s,t)=(st)^{m(s,t)}$ and $W=\mpair{S\mid R}$ where $R=\{s^2\mid s\in S\}\cup R_0$ with
$$
R_0=\{(st)^{m(s,t)}\mid C_{s,t}\in\Irr_\id, \ s,t\textrm{ bounded} \}.
$$
So $(W,S)$ is a Coxeter system. Moreover, by the first implication, all $C_{s,t}$ are then in $\Irr_\id$.
\end{proof}

\begin{rem}
\label{rem:JoinIrred} Let $s,t\in S$ distincts and bounded such that $\ell(s\vee_R t)=k$. Write $z=s\vee_R  t$. If $k=2$, then $z=sa=tb$ for some $a,b\in S$ and the cycle $(\id, s,z,b)$ is irreducible by  Lemma~\ref{lem-geo}. 

Now assume $k>2$. Since $s,t\in D_L(z)$, we write $z=sa_2\dots a_k=tb_2\dots b_k$ reduced words.  Consider the cycle 
$$
C=(\id,s,sa_2,\cdots, sa_2\dots a_{k-1},z,b_{k-1}\dots b_1t,\cdots, b_1t,t).
$$
If $\{e,s\}$ and $\{e,t\}$ are edges in an irreducible cycle $C'$, then $C=C'$ by Proposition~\ref{prop:EMISJoin}. This lead us to ask the following question:  

\begin{question}\label{qu:Join} Does $C$ belong to $\Irr_\id$?  
\end{question}
If the answer to this question is positive, it would have the noticeable consequence that $D_L(s\vee_R t)=\{s,t\}$.  For cactus groups, the companion graph is made of squares and in this case, the answer to Question~\ref{qu:Join} is positive. 
\end{rem}

%%%%%%%%%%%%%%%%%%%%%%
%%%%%%%%%%%%%%%%%%%%%%
%%%%%%%%%%%%%%%%%%%%%%
%%%%%%%%%%%%%%%%%%%%%%
\section{$2$-recognizable presentations of \EIS}
\label{se:2recognizable}

Contrary to the Coxeter graph of a Coxeter system, the companion graph of an \EMIS is unfortunately not enough to fully encode the  finite presentation given in~Theorem~\ref{thm:presentationEMIS}. In this section we introduce the notion of $2$-recognizable presentation to describe the labels of the irreducible  cycles of an \EMIS. 

%%%%
\subsection{Words, cyclic words and factors}   
\label{ss:CyclicWords}

We consider the universal Coxeter system $(\U_S,S)$. The definition of a $2$-recognizable presentation is based on the notion of  cyclic words attached to  reduced words in $\U_S$. We recall first some well-known terminology on combinatorics of words. 

Recall that a {\em $p$-factor of a word $w\in S^*$} is a word $u$ of length $p\in\mathbb N^*$ such that $w=w_1uw_2$ for some words $w_1,w_2$ (possibly empty). A {\em letter $s\in S$ precedes $u$} if it is the last letter of the word $w_1$ (if $w_1$ is not empty); similarly, a {\em letter $s\in S$ follows $u$} if it is the first letter of the word $w_2$ (if $w_2$ is not empty).  In this article, we make use in particular of {\em $2$-factors}, i.e, factors $ab$ with $a,b\in S$. 

It is well-known that any element in $\U_S$ has a unique reduced word, that is, a word without $2$-factors $ss$ ($s\in S$). We identify therefore $\U_S$ with its set of reduced words:
$$
\Red=\Red(\U_S,S)=\{s_1\dots s_k\in S^*\mid s_i\not = s_{i+1}, \ \forall 1\leq i<k\}.
$$
Of course, if $w$ is a reduced word in $\U_S$ and $u$ is a $p$-factor of $w$, then  $w=w_1uw_2$ and  $u,w_1,w_2$ are reduced. 
 
 Let  $w=s_1\cdots s_k\in \U_S$  be a reduced word. 
\begin{itemize}
\item The {\em cyclic word  $[w]$} is the equivalence class of $w$ by cyclic shift:
$$
[w]=\{vu\mid u,v\in \Red,\ w=uv\}.
$$
\item We say that $[w]$ is a {\em reduced cyclic word} (or equivalently that {\em $w$ is cyclically reduced}) if any $g\in [w]$ is a reduced word in $\U_S$; or equivalently:  
\begin{equation}
\label{eq:CyclicRed}
[w]\quad \text{is a reduced cyclic word} \quad \iff\quad s_1\not = s_k.
\end{equation}
In particular, $w$ contains at least two distinct letters.

\item A word $u$ is a $p$-factor of $[w]$ if it is a $p$-factor of some $w' \in [w]$.  
%\item A {\em cyclic factor of $w$} is a reduced word $u\in \U_S$ that is a factor of some word in $[w]$. 
\end{itemize}

The following observation follows directly from definitions.

\begin{prop}
\label{prop:Factor}
Let $w=s_1\cdots s_k\in \U_S$ ($k\geq 2$) such that $[w]$ is a reduced cyclic word, then:
\begin{enumerate}
\item $[w^{-1}]$ is a reduced cyclic word;
 \item the words $s_kws_1$ and  $s_{k-1} s_kws_1 s_2$ are reduced in $\U_S$;
 \item For any $2$-factor $ab$ of $s_kws_1$, there is in  $s_{k-1} s_kws_1 s_2$ a letter $s$ that precedes $ab$ and a letter $r$ that follows $ab$ .
  \end{enumerate}
\end{prop}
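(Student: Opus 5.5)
The plan is to check each of the three items directly from the definitions in \S\ref{ss:CyclicWords}, using only two facts: a word of $S^*$ is reduced in $\U_S$ exactly when it has no $2$-factor $ss$ with $s\in S$, and the characterization \eqref{eq:CyclicRed} says that $[w]$ is a reduced cyclic word iff $w$ is reduced and $s_1\neq s_k$. No earlier structural result of the paper is needed.

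For (1), I would note that $w^{-1}=s_k\cdots s_1$ is reduced, because its consecutive pairs $s_{i+1}s_i$ are the reversed consecutive pairs of $w$ and so are distinct letters. Its first and last letters are $s_k$ and $s_1$, which differ by hypothesis, so \eqref{eq:CyclicRed} applies to $w^{-1}$. For (2), I would list the $2$-factors of $s_kws_1=s_ks_1s_2\cdots s_ks_1$. They are the $2$-factors of $w$, which are reduced, together with the two junction factors $s_ks_1$ and $s_ks_1$, which are reduced since $s_1\neq s_k$. The word $s_{k-1}s_kws_1s_2$ adds the junctions $s_{k-1}s_k$ and $s_1s_2$. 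These are $2$-factors of $w$, so they are reduced; here $k\geq 2$ guarantees that $s_{k-1}$ and $s_2$ exist. If $k=2$, then $s_{k-1}=s_1$ and $s_2=s_k$, and the same check still works.

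For (3), I would observe that every $2$-factor $ab$ of $s_kws_1$ sits in some position of that word. Each such position has one letter immediately to its left and one immediately to its right inside the longer word $s_{k-1}\,(s_kws_1)\,s_2$, because that word extends $s_kws_1$ by exactly one letter on each side. Taking $s$ and $r$ to be those letters gives the claim.

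The only mild obstacle is bookkeeping. One has to make sure that the boundary $2$-factors, namely those involving the added letters, are covered, and that the degenerate case $k=2$ does not break the indexing. Both points are handled by the explicit listing above.
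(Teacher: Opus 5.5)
Your proposal is correct and takes the same approach as the paper. The paper gives no separate proof and only remarks that the proposition follows directly from the definitions. Your explicit listing of the $2$-factors, including the junction factors $s_ks_1$, $s_{k-1}s_k$, $s_1s_2$ and the degenerate case $k=2$, is exactly that direct check.
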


%%%%
\subsection{$2$-recognizable involution systems}   
\label{ss:2-recog}

We are now ready to give the definition of a $2$-recognizable presentation.

\begin{defi}[$2$-recognizable subsets] \label{def: 2-rec subsets}
    We say that a subset $R_0 \subseteq \U_S$ is \emph{$2$-recognizable} if the following conditions hold:
\begin{enumerate}[series=2-recognizable]    
    \item \label{condition 1 - recognizable} $[w]$ is a reduced cyclic word for every $w \in R_0$;

    \item \label{condition 2 - recognizable} Every $w \in R_0$ has even length at least $4$;

	\item
	\label{condition 3 - recognizable}
	Let $w\in R_0$ and $ab$ a $2$-factor of $[w]$. If $ab$ is also $2$-factor of $[v]$ or $[v^{-1}]$ for some $v\in R_0$, then $w=v$.
   
    \item \label{condition 4 - recognizable} Let $w=s_1\cdots s_k\in R_0$ and $ab$ be a $2$-factor of $s_kws_1$. If $s\in S$ precedes (resp. follows) $ab$ in $s_{k-1} s_kws_1 s_2$, then for any occurrence of the factor $ab$ in $s_kws_1$,  $s$  also precedes (resp. follows) that occurence in $s_{k-1} s_kws_1 s_2$.
\end{enumerate}
\end{defi}

\begin{defi}[$2$-recognizable involution system]
Set $R= \{s^2 \mid s \in S\} \cup R_0$.  A presentation of an involution system $(W_R,S)$ of the form $ W_R=\mpair{S\mid R}$ for some 2-recognizable set $R_0$ is called a {\em $2$-recognizable presentation}, and $(W_R,S)$ is called a {\em $2$-recognizable involution system}. 
\end{defi}

\begin{rem}\label{rem:Recognizable}
	Condition~(\ref{condition 1 - recognizable}) implies in particular that any $w\in R_0$ does not start and end with the same letter in $S$. Condition~(\ref{condition 2 - recognizable}) implies that $(W_R,S)$ is an \EIS by Corollary~\ref{cor:EIS-relations}.
\end{rem}

Let $\cyc(w) = [w] \cup [w^{-1}]$, that is the set of all cyclic shifts of $w$ and $w^{-1}$. Moreover, let $\cyc(R_0) = \bigcup_{w \in R_0} \cyc(w)$. Condition (\ref{condition 3 - recognizable}) and (\ref{condition 4 - recognizable}) together are equivalent to the following:

\begin{enumerate}[resume=2-recognizable]
	\item \label{condition 5 - recognizable} For any $a,b \in S$ there exists at most one word in $\cyc(R_0)$ starting with $ab$. In particular, if $|S|<\infty$, then a $2$-recognizable presentation is finite. 
\end{enumerate}

Indeed, suppose (\ref{condition 5 - recognizable}) holds. Let $w,v \in R_0$ be distinct. Then $[w]$ or $[w^{-1}]$ cannot share a 2-factor $ab$ with $[v]$ or $[v^{-1}]$, otherwise there would be $w' \in \cyc(w)$ and $v' \in \cyc(v)$ both starting with $ab$. Hence, $(\ref{condition 3 - recognizable})$ holds. Similarly, if there was $w = s_1 \cdots s_k \in R_0$ and a 2-factor $ab$ of $s_kws_1$ such that $s$ precedes or follows an occurence of $ab$ in $s_{k-1} s_kws_1 s_2$ but not a different occurrence of $ab$ in $s_{k-1} s_kws_1 s_2$ that is an occurence in $s_kws_1$, then there would be $w_1, w_2 \in \cyc(w)$ such that $w_1 = abs \cdots$, $w_2 = abs' \cdots$ and $s \neq s'$, contradicting the assumption (\ref{condition 5 - recognizable}). Hence, (\ref{condition 4 - recognizable}) also holds.

Conversely, if $(\ref{condition 3 - recognizable})$ holds, then for any $w,v \in R_0$ distinct, no word in $\cyc(w)$ can start with the same two letters as a word in $\cyc(v)$. Furthermore, if $(\ref{condition 4 - recognizable})$ also holds, then given $w = s_1 \cdots s_k \in R_0$, since every occurence of a 2-factor $ab$ in $s_{k-1} s_kws_1 s_2$ that is also an occurence in $s_kws_1$ is followed and preceded by the same letter, we can conclude that there is a single word in $\cyc(w)$ starting with $ab$, so (\ref{condition 5 - recognizable}) holds.
 
 \begin{rem} \label{rem: small cancellation}
 	From the point of view of small cancellation theory (over free products) (see \cite[\S5.9]{LyndonScupp}), (\ref{condition 5 - recognizable}) means that every piece with respect to a 2-recognizable presentation has length 1.
 \end{rem}

Now, by (\ref{condition 1 - recognizable}) and (\ref{condition 2 - recognizable}), any word $w \in R_0$ determines an even length cycle in $\Cay(W_R,S)$ by reading the letters of $w$ starting at some vertex $g \in W_R$. Moreover, the set of all words labeling such cycles is $\cyc(R_0)$. Then, (\ref{condition 5 - recognizable}) guarantees that any two cycles associated to distinct words $w,w'$ of $R_0$ never have two consecutive edges in their intersection. This leads to the following result.

\begin{prop} \label{prop: 2-rec weakly intersects}
	Let $(W_R,S)$ be an involution system with $2$-recognizable presentation $\mpair{S\mid R}$ and let $\mathcal{C}$ be the set of cycles associated to words in $R_0$. Then $\mathcal{C}$ weakly intersects.
\end{prop}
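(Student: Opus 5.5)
The plan is to translate ``two consecutive shared edges'' into ``a shared $2$-factor together with the same starting vertex'', and then use condition~(\ref{condition 5 - recognizable}) to force the two cycles to coincide. A cycle in $\mathcal{C}$ is obtained by choosing $g\in W_R$ and $w'\in\cyc(R_0)$, and following from $g$ the edges labeled by the successive letters of $w'$. Reading the same cycle in the other direction, or from another vertex, gives another element of $\cyc(R_0)$ (a cyclic shift of $w'$ or of its inverse). So a cycle in $\mathcal{C}$ is determined by a vertex $g$ on it together with the word of $\cyc(R_0)$ read from $g$ in a chosen direction.

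Suppose $C_1,C_2\in\mathcal{C}$ share two consecutive edges $\{h,ha\}$ and $\{ha,hab\}$, with $a,b\in S$. Take $g=h$ and the direction $h\to ha\to hab$.

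1. Reading $C_1$ from $h$ in this direction gives a word $w_1\in\cyc(R_0)$ beginning with $ab$. Reading $C_2$ the same way gives a word $w_2\in\cyc(R_0)$, also beginning with $ab$.
2. By condition~(\ref{condition 5 - recognizable}), there is at most one word in $\cyc(R_0)$ starting with $ab$, so $w_1=w_2$.
3. A cycle is the edge-path traced from its base vertex by its label word. Since $C_1$ and $C_2$ start at the same vertex $h$ and have the same label word, they are the same cycle.

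The main obstacle is making step 3 rigorous. Reading a word from $h$ traces a closed edge-loop, and we need this loop to be a genuine cycle, i.e.\ with no repeated vertices, so that it is determined by its labels. I would sidestep this by defining $\mathcal{C}$ as the set of closed edge-loops (or their supports) obtained by reading words of $\cyc(R_0)$ from vertices. Two members then coincide exactly when they have the same base vertex and the same label word up to cyclic rotation and reversal, which is precisely what steps 1–2 provide. We also need $w_1$ to actually begin with $ab$. This holds because $\{h,ha\}$ is labeled $a$ and $\{ha,hab\}$ is labeled $b$, and in $C_1$ these two edges are consecutive, so one of the two orientations of $C_1$ traverses them in the order $a,b$.

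One more point needs checking: consecutive edges of a cycle are distinct. Condition~(\ref{condition 1 - recognizable}) guarantees that no letter repeats consecutively, even cyclically, so $a\neq b$ and every $2$-factor is meaningful. With this, the weak intersection of $\mathcal{C}$ follows.
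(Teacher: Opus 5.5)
Your proof is correct and takes essentially the same approach as the paper, which justifies the proposition in the paragraph just before it: relator words trace cycles labeled by words of $\cyc(R_0)$, and condition~(\ref{condition 5 - recognizable}) forbids two such cycles from sharing two consecutive edges. Your explicit tracking of the base vertex $h$ is slightly more careful, since it also covers two translates of the cycle of the same relator, which the paper's phrase ``distinct words'' leaves implicit; the simplicity issue you raise in step 3 is one the paper also takes for granted (it asserts that relator words trace genuine cycles), so under the statement's hypotheses your step 3 is immediate.
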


Recall from \S\ref{ss:RelationsIrred} that associated to any irreducible cycle $C \in \Irr_{\id}$ in the Cayley graph of an \EMIS $(W,S)$ are two word $\nu_C$ and $\nu_C^{-1}$ obtained by reading the labels of edges of $C$ starting from $\id$. Furthermore, it is proved in Proposition \ref{prop:RelationsIrred} that $W=\mpair{S \mid \{s^2 \mid s \in S\} \cup \{\nu_C \mid C \in \Irr_\id\}}$ is a finite presentation of $W$. We now show that by removing redundant relations this gives a 2-recognizable presentation for $(W,S)$.

\begin{thm} \label{thm:EMIS is recognizable} Any \EMIS $(W,S)$ admits a finite $2$-recognizable presentation. In particular, the presentation obtained by considering only one word in $\cyc(\nu_C)$ for each $C \in \Irr_{\id}$ is $2$-recognizable.
\end{thm}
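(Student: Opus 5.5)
We start from the finite presentation of Proposition~\ref{prop:RelationsIrred}, $W=\mpair{S \mid \{s^2\}\cup\{\nu_C \mid C\in\Irr_\id\}}$. The first step is to remove redundancy. Each irreducible cycle $C'$ of $\G$ is a left translate $gC$ of some $C\in\Irr_\id$. The words obtained by reading $C$ from any of its vertices, in either direction, are exactly $\cyc(\nu_C)$. Two distinct $C_1,C_2\in\Irr_\id$ may be translates of one another; in that case their label sets $\cyc(\nu_{C_1})$ and $\cyc(\nu_{C_2})$ coincide. So we pick one class per translation orbit, i.e.\ one word $w_C\in\cyc(\nu_C)$ for each orbit. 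Let $R_0$ be the resulting finite set. By Proposition~\ref{prop:RelationsIrred}, it still presents $W$, since conjugates and inverses of relators give nothing new. It then remains to check conditions (1), (2) and (5) of Definition~\ref{def: 2-rec subsets}; recall that (5) is equivalent to (3) together with (4).

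Conditions (1) and (2) are routine. By Lemma~\ref{lem-geo}, every $C\in\Irr(\G)$ has length $2k$ with $k\ge 2$, which gives (2). For (1), note that $\Cay(W,S)$ has no two consecutive edges with the same label, because $s^2=\id$ and the graph is simple; hence reading around the closed cycle never produces a factor $ss$. This includes the wrap-around position, so the last and first letters of $w_C$ differ, and $[w_C]$ is a reduced cyclic word by \eqref{eq:CyclicRed}.

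The heart of the argument is (5): for $a,b\in S$, at most one word of $\cyc(R_0)$ begins with $ab$. Suppose $u,u'\in\cyc(R_0)$ both start with $ab$. Read $u$ from the vertex $a$, i.e.\ along the path $\id \to a \to ab$, where these are group elements. This traces a cycle $D$ that is a translate of an irreducible cycle, hence $D\in\Irr(\G)$ because left multiplication is a graph automorphism. Similarly $u'$ traces $D'\in\Irr(\G)$. Both $D$ and $D'$ contain the two consecutive edges $\{\id,a\}$ and $\{a,ab\}$. By Proposition~\ref{weakly-lemma}, $D=D'$. A cycle together with a starting vertex and a first edge determines its label word uniquely, so $u=u'$.

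The step I expect to be the real obstacle is reconciling this equality of words with the choice of representatives: we need $u,u'$ to come from the \emph{same} element of $R_0$, i.e.\ (3). If $u\in\cyc(w_C)$ and $u'\in\cyc(w_{C'})$, then $D$ is a translate of $C$ and also of $C'$, so $C$ and $C'$ lie in the same translation orbit. By our choice of one word per orbit, $w_C=w_{C'}$. One must also handle the possibility that $\cyc(w)$ contains the same word twice, namely when $w$ is a proper power or is conjugate to its inverse. This is harmless: (5) asks that the \emph{word} starting with $ab$ be unique, not its position. Finally, (4) follows from (5), as shown in the equivalence discussion preceding the theorem, and finiteness follows from (5) since $|S|<\infty$.
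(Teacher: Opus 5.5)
Your proof is correct and follows the same route as the paper: you start from the presentation of Proposition~\ref{prop:RelationsIrred}, read off (1) and (2) from the cycle structure, and derive (5) from the weak intersection of irreducible cycles (Proposition~\ref{weakly-lemma}). Choosing one word per translation orbit (i.e.\ per class $\cyc(\nu_C)$) rather than per $C\in\Irr_\id$, and checking (3) directly, is more careful than the paper's proof, which tacitly assumes that distinct elements of $R_0$ lie in distinct $\cyc$-classes: the three hexagons in $\Irr_\id$ of Example~\ref{ex:EMIS-A2} are translates of one another, and choosing different cyclic shifts for them would violate (3) while leaving (5) intact.
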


\begin{proof}
	For each $C \in \Irr_{e}$ choose one word in $\cyc(\nu_C)$ and let $R_0$ be the set of such words. Note that $R_0$ is finite by Proposition \ref{prop:RelationsIrred}. We claim that $R_0$ is a 2-recognizable set. Since each word $w \in R_0$ labels some cycle in $\Cay(W,S)$, $[w]$ must be a reduced cyclic word, so (\ref{condition 1 - recognizable}) is satisfied. Moreover, $(W,S)$ is an \EIS so (\ref{condition 2 - recognizable}) is also satisfied. In addition, since the irreducible cycles of $\Cay(W,S)$ weakly intersect by Proposition \ref{weakly-lemma}, for any distinct $a,b \in S$ there can be at most one word in $\cyc(R_0)$ starting with $ab$. Hence (\ref{condition 5 - recognizable}) is satisfied and thus $R_0$ is 2-recognizable. Finally, note that the normal closure of $\{\nu_C \mid C \in \Irr_{\id}\}$ equals the normal closure of $R_0$, so $\mpair{S\mid \{s^2 \mid s \in S\} \cup R_0}$ is also a presentation of $W$.  
\end{proof}

\subsection{Classification of $2$-recognizable presentations in rank 3} It turns out that the converse to Theorem \ref{thm:EMIS is recognizable} holds in rank 3. We prove this by classifying in this section all 2-recognizable presentations in rank 3. Then, in \S\ref{Rank 3 section}, we show that they are \EMIS.

\begin{thm}
\label{thm:rk3-reco}
    Let $(W,S)$ be an involution system of rank 3. Then $(W,S)$ is a $2$-recognizable involution system if and only if it admits one of the following presentations (where $\infty$ means there is no relation): 
    \begin{enumerate}[(i)]
        \item $\langle a,b,c \, | \, a^2=b^2=c^2 = (abc)^{2m}=\id\rangle$, $m \in \N_{\geq 1}$; \label{type1}
        
        \item $\langle a,b,c \, | \, a^2=b^2=c^2 = (abacbc)^{m}=\id \rangle$, $m \in \N_{\geq 1}$; \label{type2}
        
        \item $\langle a,b,c \, | \, a^2=b^2=c^2 = (abac)^m=(bc)^n=\id\rangle$, $m \in \N_{\geq 1}$, $n \in \N_{\geq 2} \cup \{\infty\}$; \label{type3}

        \item $\langle a,b,c \, | \, a^2=b^2=c^2 = (ab)^m=(bc)^n=(ac)^l=\id\rangle$, $m,n,l \in \N_{\geq 2} \cup \{\infty\}$ (the case of Coxeter systems of rank $3$). \label{type4}
    
    \end{enumerate}    
\end{thm}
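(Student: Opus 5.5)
The plan is to recast condition~(\ref{condition 5 - recognizable}) as a statement about a partial dynamical system on the six ordered pairs $(x,y)$ with $x\neq y$ in $S=\{a,b,c\}$. I would then classify these systems up to relabelling of $S$, and check the converse directly.

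\emph{The successor map.} Let $R_0\subseteq\U_S$ be $2$-recognizable. Every $w\in R_0$ is cyclically reduced by condition~(\ref{condition 1 - recognizable}), so reading $[w]$ cyclically gives a closed walk on ordered pairs: a pair $(x,y)$ is followed by $(y,z)$ with $z\neq y$. By condition~(\ref{condition 5 - recognizable}), at most one word of $\cyc(R_0)$ starts with $xy$. Hence the third letter $z$ is determined by $(x,y)$, and we get a well-defined partial map
$$f(x,y)=(y,z)$$
on the set $P$ of ordered pairs occurring in $\cyc(R_0)$.

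This map has three structural properties.
\begin{itemize}
\item[--] $f$ is injective. The predecessor of $(y,z)$ is read off from the unique word of $\cyc(R_0)$ starting with $zy$, which lies in the reversed cyclic word.
\item[--] $f$ is compatible with reversal: $f(x,y)=(y,z)$ implies $f(z,y)=(y,x)$.
\item[--] Since everything is finite, $f$ is a permutation of $P$.
\end{itemize}

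\emph{Relators as orbits.} Each $w\in R_0$ traces out one $f$-orbit $O$ (from $[w]$) together with its reversal $\bar O$ (from $[w^{-1}]$). Condition~(\ref{condition 3 - recognizable}) says that distinct relators use disjoint sets $O\cup\bar O$. If $u$ denotes the word read once around $O$, with $|u|=|O|$, then $w$ is a cyclic shift of $u^m$ for some $m\geq1$. So up to cyclic shift and inversion, which do not change the presented group, each relator is a power of an orbit word. Condition~(\ref{condition 2 - recognizable}) then requires $m|O|$ to be even and at least $4$.

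\emph{Enumeration.} For each pair $(x,y)$, the successor's third letter is either $x$ (a \emph{dihedral} step) or the remaining letter $z$ (a \emph{turning} step). By the reversal rule, the choice at $(x,y)$ determines the choice at $(z,y)$. I would enumerate by the number of two-letter subsets $\{x,y\}$ on which $f$ is dihedral in both directions. Such a subset gives an orbit $\{(x,y),(y,x)\}$ and a relator $(xy)^n$ with $n\geq2$; an unused pair means no relation, i.e.\ $n=\infty$.
\begin{itemize}
\item[--] All three subsets dihedral (or unused) gives the Coxeter case~(\ref{type4}).
\item[--] All steps turning gives the orbits $(a,b)\to(b,c)\to(c,a)$ and its reverse. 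The relator is a power of $abc$, and evenness forces $(abc)^{2m}$: case~(\ref{type1}).
\item[--] Exactly one dihedral subset, say $\{b,c\}$, gives the relator $(bc)^n$. The four pairs involving $a$ are then forced into the single orbit $(a,b)\to(b,a)\to(a,c)\to(c,a)\to(a,b)$, with orbit word $abac$: case~(\ref{type3}).
\item[--] No dihedral subset but some mixed steps (dihedral in one direction, turning in the other) gives, after relabelling, the single orbit
$$(a,b)\to(b,a)\to(a,c)\to(c,b)\to(b,c)\to(c,a)\to(a,b),$$
with orbit word $abacbc$: case~(\ref{type2}).
\end{itemize}

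\emph{Main obstacle.} The hardest step is making this enumeration airtight. One has to show that every reversal-compatible injective partial map on the six pairs is, up to the action of $\mathfrak S_3$ on $\{a,b,c\}$ and up to unused (empty) orbits, one of the four configurations above. In particular one must rule out mixed configurations other than $abacbc$ and $abac\,|\,bc$. I would handle this by fixing $f(a,b)$, propagating the reversal rule, and tracking which pairs are already consumed, so that each branch closes up after at most six steps.

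\emph{Converse.} For the other direction, I would check directly that each listed set $R_0$ is cyclically reduced and has even length at least $4$; this uses $m\geq1$ for $(abc)^{2m}$ and $n\geq2$ for $(bc)^n$. Finally, I would verify that its cyclic $2$-factors, together with their reversals, assign to each ordered pair a unique following letter, which is exactly condition~(\ref{condition 5 - recognizable}).
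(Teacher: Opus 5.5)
Your proposal is correct, and it takes a different route from the paper. The paper splits on $|R_0|\in\{0,1,2,3\}$. For one relator it reads the word letter by letter (does it contain a factor $xyx$, what are the fourth and fifth letters) and uses conditions (3) and (4) directly to force periodicity: for example, if $w=(abac)^mu$ with $u\neq abac$, some 2-factor acquires two different successors. The cases $|R_0|=2,3$ are then reduced to the one-relator list via condition (3). You instead encode condition (5) as a reversal-compatible permutation $f$ of the at most six ordered pairs, so that relators become $f$-orbits. This handles all values of $|R_0|$ at once, makes $w=u^m$ automatic, and reduces the classification to a finite check. It is also the natural framework for attempting the analogous problem in higher rank.

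To close your enumeration cleanly, organise it by middle letters. When all six pairs are used, $f$ maps the two pairs ending in $y$ bijectively onto the two pairs starting with $y$; this uses injectivity, not just the reversal rule. So at each letter $y$ either both pairs ending in $y$ take dihedral steps or both take turning steps. The eight labellings give the following cases:
no turning letter gives the Coxeter case;
one turning letter gives case (iii) with $n<\infty$;
two turning letters give case (ii);
three turning letters give case (i).
In particular ``exactly two dihedral subsets'' never occurs, but your list does not address this case.

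If some two-letter subset is unused, a short propagation gives either a Coxeter system with an $\infty$ label or $(abac)^m$ with no $bc$-relation. Count this last configuration under ``exactly one dihedral subset'', with the unused subset meaning $n=\infty$. Your derivation of $abacbc$ in the ``mixed'' bullet needs all six pairs to be used, so that bullet cannot absorb it.

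In the converse, unique successors yield condition (5) only together with the fact that no pair occurs in two different relators. That second fact is what ensures all words of $\cyc(R_0)$ starting with $xy$ have the same length. It does hold for the listed sets.
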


\begin{proof}  It is readily seen that the presentations given above are $2$-recognizable. So we only need to classify all 2-recognizable presentations $W_R = \mpair{S \mid \{s^2 \mid s \in S\} \cup R_0}$ where $|S|=3$. We do this up to renaming letters or replacing an element $w \in R_0$ by $w' \in \cyc(w)$ since these operations do not change the group up to isomorphism.

    \smallskip
    Let $S = \{a,b,c\}$. Since there are only three letters in $S$, we must have that $|R_0| \leq 3$ by (\ref{condition 3 - recognizable}). If $R_0=\emptyset$, we get presentation \ref{type4} with $m=n=l=\infty$.

    \smallskip
    \noindent {\bf Case $|R_0| = 1$}.  Let $w = s_1 \cdots s_k \in R_0$ be nonempty. Without loss of generality we can assume $w$ starts with $ab$.
    
    Consider first the case when $xyx$ is not a subword of $w$, where $x,y \in S$. This forces the third letter of $w$ to be $c$, the fourth to be $a$, the fifth to be $b$, and so on. Since $w$ is cyclically reduced by (\ref{condition 1 - recognizable}) it cannot end with $a$. Furthermore, $w$ cannot end in $b$, otherwise $s_kws_1 = bwa$ would have an occurrence of $ab$ followed by $c$ and another followed by $a$, contradicting (\ref{condition 4 - recognizable}). Hence, since $w$ has even length at least 4 by (\ref{condition 2 - recognizable}), we must have that $w=(abc)^{2m}$ for some $m \in \N_{\geq 1}$ and we get presentation \ref{type1}. 
    
    Now, consider the case when $xyx$ is a subword of $w$ for some $x,y \in S$. Without loss of generality, assume $w$ starts with $aba$. 
    
    If the fourth letter of $w$ is $b$, then $w$ cannot contain the letter $c$. Otherwise, we could either find two occurrences of $ab$ in $s_kws_1$ where one is followed by $a$ and the other by $c$, or two occurrences of $ba$ in $s_kws_1$ where one is followed by $b$ and the other by $c$; contradicting (\ref{condition 4 - recognizable}). Since $w$ is cyclically reduced by (\ref{condition 1 - recognizable}) it cannot end with $a$, so in this case we get presentation \ref{type4} with $n=l=\infty$. 
    
    On the other hand, if the fourth letter of $w$ is $c$, then we have three cases. The first case is when $w$ has length 4, so $w=abac$ and we obtain the presentation \ref{type3} with $n=\infty$. 
    
    The second case is $|w|\geq 5$ and the fifth letter is $a$. We then claim that $w = (abac)^m$ for some $m\in \N_{\geq 1}$. Suppose not, that is $w = (abac)^mu$ for some $u \neq abac$ nonempty (note that if $m=1$ then $u$ starts with $a$). Let $s$ be the first letter of $u$ differing from $abac$, that is $u=u_1su_2$ where $u_1$ is a prefix of $abac$ but $abac \neq u_1su_2'$ for any word $u_2'$ (note that $s$ is possibly empty if $|u|<4$). Let $r_1,r_2$ be the two letters preceding $s$ in $w$. Then $r_1r_2$ is a 2-factor of $s_kws_1$ that has two occurrences in $s_kws_1$ followed by different letters, contradicting (\ref{condition 4 - recognizable}). Hence, we get presentation \ref{type3} with $n=\infty$. 
    
    Finally, the third case is $|w|\geq 5$ and the fifth letter is $b$. First, note that $w$ cannot be $abacb$ since $s_kws_1 = babacba$ has an occurrence of $ba$ followed by $b$ and another followed by $c$ and this would contradict (\ref{condition 4 - recognizable}). Hence, $|w|\geq 6$ and the sixth letter has to be $c$ since $w$ is cyclically reduced by (\ref{condition 1 - recognizable}). Then, a similar argument to the one in the previous paragraph gives us that $w = (abacbc)^m$ for some $m\in \N_{\geq 1}$, so we get presentation \ref{type2}.

    \smallskip
    \noindent {\bf Case $|R_0| = 2$}. Let $w_1,w_2 \in R_0$ be nonempty. If $w_1$ and $w_2$ only contain two different letters, then we get the presentation \ref{type4} with one of $m,n,l$ being infinity. Otherwise, assume that $w_1$ contains three different letters. From the case $|R_0|= 1$ we know that $w_1$ must be of the form $(abc)^{2m}$, $(abacbc)^m$ or $(abac)^m$ for some $m \in  \N_{\geq 1}$. But by (\ref{condition 3 - recognizable}), $w_1$ cannot be $(abc)^{2m}$ or  $(abacbc)^m$ since $xy$ or $yx$ is a 2-factor of $[(abc)^{2m}]$ and $[(abacbc)^m]$ for any $x,y \in S$. Hence, we must have $w_1 = (abac)^m$. Then, the only possibility for $w_2$ is $w_2 = (bc)^n$ where $n \in  \N_{\geq 2}$. Hence, we get presentation \ref{type3}.

    \smallskip
    \noindent {\bf Case $|R_0| = 3$}. Let $w_1,w_2,w_3 \in R_0$ be nonempty. If all of them only contain two different letters, then we get the presentation \ref{type4}. Otherwise, assume that $w_1$ contains three different letters. From the case $|R_0| = 2$, we must have that $w_1 = (abac)^m$ for some $m \in  \N_{\geq 1}$ and $w_2 = (bc)^n$ where $n \in  \N_{\geq 2}$. But if $w_3$ is nonempty, then we get a contradiction to (\ref{condition 3 - recognizable}) since $xy$ or $yx$ is a 2-factor of $[(abac)^m]$ or $[(bc)^n]$ for any $x,y \in S$. Hence, no word in $R_0$ can have three different letters, so the only possibility is presentation \ref{type4}.
  \end{proof}
  
  \begin{rem}     
     We observe that \ref{type1}, \ref{type2} and \ref{type3} (for $n \neq \infty$) appear as presentations (16), (17) and (18) in Georgakopoulos' classification of the planar cubic Cayley graphs \cite{cubicplanar}. The case $n = \infty$ in \ref{type3} corresponds to presentation (10) in \cite{cubicplanar}.
 \end{rem}

\subsection{A 2-recognizable involution system that is not an \EMIS} 
\label{sec: 2-rec not EMIS}
 Not all 2-recognizable involution systems are \EMIS. For instance, consider the involution system $(W,S)$ given by
 \begin{equation} \label{eq: 2-rec not EMIS}
 	W = \mpair{a,b,c,d \mid a^2=b^2=c^2=d^2=abcdcb=(ad)^2=(ac)^2=(bd)^2=e}.
 \end{equation}
 It is straightforward to verify that this is a 2-recognizable presentation. However, one can check, for example, that the elements $abc$ and $abab$ do not have a meet. We show part of the Cayley graph of $(W,S)$ in Figure \ref{fig:2-rec not EMIS}.

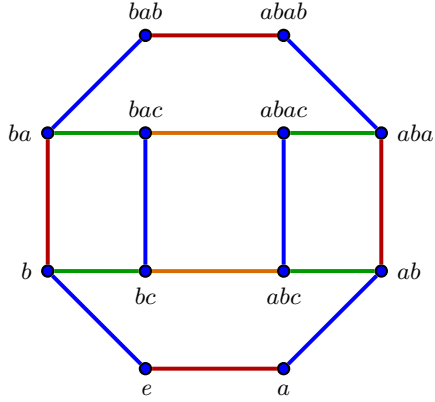
\begin{figure}[h!]
	\centering
	\begin{tikzpicture}[
		scale=2,
		vertex/.style={circle,draw=black,fill=blue, thick, inner sep=1.5pt},
		every label/.style={black,font=\small},
		edge/.style={line width=1.5pt}
		]
		
		% Radii chosen so octagon edges = square edges
		\def\rin{0.65}
		\def\rout{1.2}
		
		% Rotation of outer octagon to align with inner square
		\def\theta{-22.5}
		
		% Outer vertices
		\node[vertex,label=above:{$bab$}]  (v1) at ({135+\theta}:\rout) {};
		\node[vertex,label=above:{$abab$}] (v2) at ({90+\theta}:\rout) {};
		\node[vertex,label=right:{$aba$}]  (v3) at ({45+\theta}:\rout) {};
		\node[vertex,label=right:{$ab$}]   (v4) at ({0+\theta}:\rout) {};
		\node[vertex,label=below:{$a$}]    (v5) at ({-45+\theta}:\rout) {};
		\node[vertex,label=below:{$\id$}]  (v6) at ({-90+\theta}:\rout) {};
		\node[vertex,label=left:{$b$}]     (v7) at ({-135+\theta}:\rout) {};
		\node[vertex,label=left:{$ba$}]    (v8) at ({180+\theta}:\rout) {};
		
		% Inner vertices (square)
		\node[vertex,label=above:{$bac$}]  (u1) at (135:\rin) {};
		\node[vertex,label=above:{$abac$}] (u2) at (45:\rin) {};
		\node[vertex,label=below:{$abc$}]  (u3) at (-45:\rin) {};
		\node[vertex,label=below:{$bc$}]   (u4) at (-135:\rin) {};
		
		% Outer cycle
\draw[edge, red!70!black]   (v1)--(v2);
\draw[edge, blue]           (v2)--(v3);
\draw[edge, red!70!black]   (v3)--(v4);
\draw[edge, blue]           (v4)--(v5);
\draw[edge, red!70!black]   (v5)--(v6);
\draw[edge, blue]           (v6)--(v7);
\draw[edge, red!70!black]   (v7)--(v8);
\draw[edge, blue]           (v8)--(v1);

% Inner square
\draw[edge, orange!85!black] (u1)--(u2);
\draw[edge, blue]            (u2)--(u3);
\draw[edge, orange!85!black] (u3)--(u4);
\draw[edge, blue]            (u4)--(u1);

% Outer–inner connections
\draw[edge, green!60!black] (v8)--(u1);
\draw[edge, green!60!black] (v3)--(u2);
\draw[edge, green!60!black] (v4)--(u3);
\draw[edge, green!60!black] (v7)--(u4);
		
	\end{tikzpicture}
	\caption{Part of the Cayley graph of $(W,S)$ from the presentation in Eq.~(\ref{eq: 2-rec not EMIS}).}
	\label{fig:2-rec not EMIS}
\end{figure}

%%%%%%%%%%%%
%%%%%%%%%%%%
\section{Quasi-Coxeter systems and \EMIS of rank 3} 
\label{Rank 3 section}

Let $(W,S)$ be an \EIS. By Proposition~\ref{prop:EIS}, the Hasse diagram of $(W,\leq_R)$ is the oriented Cayley graph $\oCay(W,S)$. Example~\ref{ex:EMIS-A2} shows an \EMIS with  $\oCay(W,S)$ isomorphic, as a directed graph, to the Hasse diagram of a Coxeter system $(\overline W,\overline S)$. In particular, in this case, the semilattice structure is then entirely determined by the Coxeter companion of $(W,S)$. We first formalize this condition with a definition. 

\begin{defi}[Quasi-Coxeter systems] We say that $(W,S)$ is a {\em quasi-Coxeter system} if there is a Coxeter system $(\overline W,\overline S)$, such that $\oCay(W,S)$ and the Hasse diagram of $(\overline W,\overline S)$ are isomorphic (as directed graphs). In this case $(\overline W,\overline S)$ is the {\em Coxeter-companion of $(W,S)$} as defined in Definition~\ref{def:Companion}. 
\end{defi}

An example of a quasi-Coxeter system was already discussed in Example \ref{ex:EMIS-A2}. We also illustrate a hyperbolic quasi-Coxeter system in Figure~\ref{fig:Hypo}. It turns out that every \EMIS of rank 3 is a quasi-Coxeter system, as will be shown in \S\ref{ss: rank 3 EMIS}: see Theorem \ref{rank3-theorem} and Tables \ref{table: Classification in rank 3}~and~\ref{table2: Classification in rank 3} for a full classification.    However, this is not true for higher rank. In~\S\ref{se:FurtherOpen}, we provide an example of an \EMIS of rank $4$ that is not a quasi-Coxeter system. 

The following proposition, which motivates the definition, follows easily from definitions.

\begin{prop}\label{prop:QuasiCoxeter} Let $(W,S)$ be a quasi-Coxeter system with Coxeter companion $(\overline{W},\overline{S})$, then $(W,S)$ is an \EMIS and $(W,\leq_R)$ and $(\overline{W},\leq_R)$ are isomorphic as complete meet-semilattices.  \qed
\end{prop}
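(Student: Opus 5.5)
The plan is to transport every relevant structure along a graph isomorphism. Let $\Phi:\oCay(W,S)\to \oCay(\overline W,\overline S)$ be an isomorphism of directed graphs, where the target is the Hasse diagram of $(\overline W,\leq_R)$ (by Proposition~\ref{prop:EIS} applied to the Coxeter system, which is an \EIS).

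The first step is to show that $(W,S)$ is an \EIS. Since $\Phi$ is an isomorphism of directed graphs and every edge of the target is oriented, every edge of $\oCay(W,S)$ is oriented. By definition of $\oCay(W,S)$, this means $\ell(ws)\neq\ell(w)$ for all $w\in W$ and $s\in S$. By Proposition~\ref{prop:Sign} and Proposition~\ref{prop:EIS}(2), $(W,S)$ is then an \EIS, and $\oCay(W,S)$ is exactly the Hasse diagram of $(W,\leq_R)$.

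The second step is to identify both posets with their Hasse diagrams. A poset of finite length intervals is the reflexive–transitive closure of its Hasse diagram. Here $u\leq_R v$ if and only if there is a directed path from $u$ to $v$ in the Hasse diagram, because geodesics from $\id$ through $u$ to $v$ are exactly increasing chains of covers. Hence $\Phi$ and $\Phi^{-1}$ preserve the order relation, and $\Phi$ is a poset isomorphism $(W,\leq_R)\cong(\overline W,\leq_R)$. In particular $\Phi(\id)=\id$, since $\id$ is the unique source (minimum) in each diagram.

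The third step is to transfer the lattice structure. A poset isomorphism preserves all existing meets and joins. By Bj\"orner's theorem, $(\overline W,\leq_R)$ is a complete meet-semilattice, so $(W,\leq_R)$ is one as well, and $\Phi$ is an isomorphism of complete meet-semilattices. Therefore $(W,S)$ is a \MIS, and together with the first step it is an \EMIS.

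The fourth step is to check that $(\overline W,\overline S)$ is indeed the Coxeter companion of Definition~\ref{def:Companion}. I expect this to be the main subtle point, and I would handle it as follows. The undirected graph underlying the Hasse diagram is the Cayley graph, so $\Phi$ induces a graph isomorphism $\Cay(W,S)\cong\Cay(\overline W,\overline S)$ fixing $\id$. Such an isomorphism preserves irreducible cycles and their lengths, since these are defined purely graph-theoretically through the cycle space. It also sends the neighbours $s$ of $\id$ bijectively to the neighbours $\overline s$ of $\id$, which defines the identification $s\mapsto\overline s$.

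Under this identification, for each pair $s,t$ the irreducible cycle in $\Irr_\id$ through $s$ and $t$, if it exists, corresponds to the one through $\overline s$ and $\overline t$. So the two systems have the same companion graph. For a Coxeter system, the companion graph equals its Coxeter type (the remark after Definition~\ref{def:Companion}). Hence the Coxeter type of $(\overline W,\overline S)$ is the companion graph of $(W,S)$, which is what was to be shown.
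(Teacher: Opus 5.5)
Your proof is correct and is the natural unpacking of what the paper means when it says the proposition ``follows easily from definitions'' (the paper writes no proof beyond \qed). Orientation of every edge gives the sign character; the order is the reflexive--transitive closure of the Hasse diagram, so the directed-graph isomorphism is a poset isomorphism fixing $\id$; and Bj\"orner's theorem then transfers the complete meet-semilattice structure. Your fourth step, checking via irreducible cycles that $(\overline W,\overline S)$ really is the Coxeter companion of Definition~\ref{def:Companion}, goes slightly beyond the paper, which builds this identification into the definition of a quasi-Coxeter system without proof.
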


%%%%
\subsection{Growth series of Quasi-Coxeter systems and regularity of the language of reduced words}
\label{ss:Growth}

The {\em growth series} (or also called {\em Poincar\'e series}) of $(W,S)$ is the formal series in $q$ defined as follows:
$$
W(q)=\sum_{w\in W} q^{\ell(w)}.
$$
If $(W,S)$ is a Coxeter system, it is known that $W(q)$ is rational and a formula can be found in \cite[Chapter~7]{BjBr05}.

\begin{cor}[of Proposition~\ref{prop:QuasiCoxeter}]\label{cor:Growth} Let $(W,S)$ be a quasi-Coxeter system with Coxeter companion $(\overline{W}, \overline{S})$. Then $W(q)=\overline{W}(q)$. In particular, $W(q)$ is rational. 
\end{cor}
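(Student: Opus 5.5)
The plan is to read the growth series directly off the Hasse diagram. By Proposition~\ref{prop:EIS}, since $(W,S)$ is an \EIS (a quasi-Coxeter system is an \EMIS by Proposition~\ref{prop:QuasiCoxeter}), the Hasse diagram of $(W,\leq_R)$ is exactly $\oCay(W,S)$. The same holds for the Coxeter system $(\overline W,\overline S)$. By the definition of a quasi-Coxeter system, there is an isomorphism of directed graphs $\phi:\oCay(W,S)\to\oCay(\overline W,\overline S)$. Equivalently, by Proposition~\ref{prop:QuasiCoxeter}, $\phi$ is a poset isomorphism $(W,\leq_R)\to(\overline W,\leq_R)$.

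The first step is to check that $\phi$ preserves length. In both posets the length function is the rank function: $\ell(w)$ is the length of any maximal chain from the bottom to $w$, that is, of any directed path from $\id$ to $w$ in the Hasse diagram. The identity is the unique minimal element in each poset, since every $w$ satisfies $\id\leq_R w$. A poset isomorphism therefore sends $\id$ to $\overline{\id}$ and sends maximal chains $[\id,w]$ to maximal chains $[\overline{\id},\phi(w)]$. Hence $\ell(\phi(w))=\ell(w)$ for all $w\in W$.

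Since $\phi$ is a length-preserving bijection, reindexing the sum gives
$W(q)=\sum_{w}q^{\ell(w)}=\sum_{\overline w}q^{\ell(\overline w)}=\overline W(q)$.
Rationality then follows from the known rationality of growth series of Coxeter systems, \cite[Chapter~7]{BjBr05}.

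The only point needing care is that the digraph isomorphism fixes the identity, and the minimality argument above handles this. No real obstacle is expected.
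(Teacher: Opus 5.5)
Your proposal is correct and follows the paper's route. The paper states the corollary as an immediate consequence of Proposition~\ref{prop:QuasiCoxeter}: the directed-graph isomorphism of Hasse diagrams is an isomorphism of graded posets with rank function $\ell$, so the growth series agree term by term, and rationality comes from the Coxeter case \cite[Chapter~7]{BjBr05}; your check that the isomorphism sends $\id_W$ to $\id_{\overline W}$ (the unique minimum) and therefore preserves length supplies exactly the detail the paper leaves implicit.
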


We show now that the language of reduced words of a quasi-Coxeter system is regular.  Recall that  the \emph{cone type} of $g \in W$ is the set
    \begin{align*}
        T(g) = \{h \in W : \ell(gh) = \ell(g) + \ell(h)\}.
    \end{align*}
    Equivalently, $T(g)$ is the set of elements belonging to a geodesic in $\Cay(W,S)$ through $g^{-1}$ and $\id$ which appear after $\id$; see for instance \cite{Ep+92}. Denote by $\mathbb{T}(W)$ the set of cone types of $W$. For Coxeter systems, it is known that $\T(W)$ is finite \cite[Corollary 2.10]{RegPartition}.

\begin{prop}
\label{prop:cone}
    If $(W,S)$ is a quasi-Coxeter system then $\T(W)$ is finite. 
\end{prop}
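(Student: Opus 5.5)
The plan is to transport cone types along the isomorphism of Hasse diagrams and then invoke the finiteness of cone types for Coxeter systems \cite[Corollary 2.10]{RegPartition}. Let $\phi:\oCay(W,S)\to$ Hasse diagram of $(\overline W,\leq_R)$ be an isomorphism of directed graphs. Since both diagrams have a unique source ($\id$, the unique element with no incoming edge), $\phi(\id)=\id$. Consequently $\phi$ preserves length, and it maps directed paths from $\id$ to directed paths from $\id$. These directed paths are exactly the geodesics from $\id$, because both systems are \EIS and so every edge is oriented by Proposition~\ref{prop:EIS}. In particular $\phi$ is an order isomorphism $(W,\leq_R)\to(\overline W,\leq_R)$.

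First, I reformulate cone types in terms of the weak order. For $g\in W$, the map $h\mapsto gh$ is a bijection from $T(g)$ onto the principal upper set $\{x\in W\mid g\leq_R x\}$. Moreover, the left translate $g^{-1}\cdot$ carries the subgraph of $\Cay(W,S)$ on this upper set, together with its edges, onto the subgraph induced on $T(g)$. Hence $T(g)$ is determined, as a subset of $W$, by the \emph{edge-labeled} rooted graph $U_g$, namely the subgraph of $\oCay(W,S)$ above $g$ with labels, rooted at $g$. Indeed, $h\in T(g)$ if and only if some label sequence read along a directed path in $U_g$ from $g$ evaluates to $h$. So it suffices to show that the labeled upper sets $U_g$ fall into finitely many isomorphism classes.

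The main obstacle is that $\phi$ forgets edge labels, while cone types depend on them. I will handle this by counting unlabeled and labeled data separately. Unlabeled: $\phi$ restricts to an isomorphism from the upper set of $g$ onto the upper set of $\phi(g)$ in $\overline W$. In a Coxeter system, the labeled upper set of $\bar g$ is a translate of the cone $T(\bar g)$, and there are finitely many of these. Thus the unlabeled rooted directed graphs $U_g$ realize only finitely many isomorphism types. Labeled: inside $U_g$, the label of any edge is determined by the labels of the edges leaving the root $g$. To see this, propagate along irreducible cycles. In $\Cay(W,S)$, the labeling at any vertex is a bijection from incident edges to $S$. Consecutive edges $(x,xs),(xs,xst)$ are determined by the group. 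The remaining difficulty is to argue that, once the labels of the out-edges at $g$ and the $|S|$ incoming labels at $g$ are fixed, the labels of all of $U_g$ are forced. I would try to prove this by induction on distance from $g$. Each new vertex $y\in U_g$ is reached by an edge $(x,y)$ with $x$ already labeled, and then the labels at $y$ are $\{\text{label}(x,y)\}\cup$ the rest. The difficulty is pinning down which remaining label goes on which edge.

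If forcing the labels proves too hard, I will fall back to the standard argument via finitely many ``boundary'' data. Set $\Phi(g)=\{(s,\text{type})\}$, recording for each incoming descent which irreducible cycle through $g$ and $gs$ ends where. The idea is that $T(g)$ is determined by the set of pairs $(s,C)$ with $C\in\Irr_g$ containing $gs$, $s\in D_R(g)$, together with the position of $g$ on $C$, in the spirit of small roots. Since $\Irr_\id$ is finite by Corollary~\ref{cor:IrrUnique} and cycle lengths are bounded, this gives finitely many possibilities. Proving that these data determine $T(g)$ is the crux, and this is the step I expect to require the most care.
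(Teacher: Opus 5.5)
\textbf{There is a genuine gap.} Your reduction is sound up to ``it suffices to show that the labeled upper sets $U_g$ fall into finitely many isomorphism classes'', and the unlabeled count is fine. The labeled step, however, is never proved. You leave the label-forcing induction open, and you also leave open the fallback claim that the pairs $(s,C)$ with $C\in\Irr_g$ determine $T(g)$; you yourself call the latter the crux.

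Moreover, the forcing claim is false as stated. Take $\U_3=\langle a,b,c\rangle$, which is a Coxeter system and hence quasi-Coxeter, and $g=a$. Then $U_g$ is a rooted binary tree. The automorphism swapping the two branches below $ab$ fixes the labels at the root but exchanges the labels $a,c$ on the two edges leaving $ab$. So root labels do not force the labeling. What you would actually need is the existence of a label-preserving isomorphism, and your induction does not address that. The fallback is an analogue of small-root/elementary-wall theory with no roots available, and it is stated without any argument.

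\textbf{The missing idea.} The obstacle ``$\phi$ forgets labels'' comes from your description $T(g)=g^{-1}\{x\mid g\le_R x\}$. That description uses left multiplication by $g^{-1}$, which $\phi$ does not respect. Use instead the label-free description, which follows from $d(u,v)=\ell(u^{-1}v)$:
\[
T(g)=\{h\in W\mid d(g^{-1},h)=d(g^{-1},\id)+d(\id,h)\}.
\]
In words, $T(g)$ is the set of vertices lying after $\id$ on a geodesic starting at $g^{-1}$.

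You already showed that $\phi$ is an isomorphism of the underlying Cayley graphs fixing $\id$, since every edge is oriented. Hence $\phi$ is an isometry, and so $\phi(T(g))=T(\phi(g^{-1})^{-1})$ as subsets of $\overline W$. Because $\phi$ is a bijection, $T(g)\mapsto T(\phi(g^{-1})^{-1})$ is a well-defined injection $\T(W)\to\T(\overline W)$. Finiteness of $\T(\overline W)$ \cite[Corollary 2.10]{RegPartition} then finishes the proof.

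This is exactly the paper's argument. It needs no labels and no irreducible cycles: look at the upper set of $\id$ in the weak order based at $g^{-1}$, rather than translating the upper set of $g$ based at $\id$.
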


\begin{proof}
    Let $(\overline{W}, \overline{S})$ be the Coxeter companion of $(W,S)$ and $\varphi : W \to \overline{W}$ be the lattice isomorphism from Proposition \ref{prop:QuasiCoxeter}. Note that $\varphi$ maps any geodesic through $\id_W$ and $g^{-1}$ to a geodesic through $\id_{\overline{W}}$ and $\varphi(g^{-1})$. Hence, $\varphi$ induces a map $\Phi : \mathbb{T}(W) \to \mathbb{T}(\overline{W})$ given by $\Phi(T(g)) = T(\varphi(g^{-1})^{-1})$. Furthermore, $\Phi$ is injective since if $g,h \in W$ have different cone types, then so must $\varphi(g^{-1})^{-1}$ and $\varphi(h^{-1})^{-1}$. Hence, since $\mathbb{T}(\overline{W})$ is finite, it follows that $\T(W)$ is also finite.
\end{proof}

As a consequence of \cite[Theorem 1.2.9]{Ep+92} we obtain the following corollary.

\begin{cor}
\label{cor:regular}
If $(W,S)$ is a quasi-Coxeter system then $\Red(W,S)$ is regular. 
\end{cor}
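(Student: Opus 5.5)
The plan is to apply \cite[Theorem 1.2.9]{Ep+92} to the finiteness of cone types from Proposition~\ref{prop:cone}. That theorem says: for a group $G$ with finite generating set, if there are finitely many cone types, then the language of geodesic words is regular. Since $S$ is closed under inverses (it consists of involutions), geodesic words in $\Cay(W,S)$ are exactly the words of $\Red(W,S)$. So it is enough to verify the hypothesis, and Proposition~\ref{prop:cone} supplies it. The proof then amounts to checking that the notions match, which I now make explicit.

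To keep the argument self-contained, I will also sketch the automaton directly. The states are the cone types $T(g)$ for $g\in W$; by Proposition~\ref{prop:cone} there are finitely many. The start state is $T(\id)=W$, and every state is accepting. From state $T(g)$, reading $s\in S$ is allowed exactly when $s\in T(g)$, i.e.\ $\ell(gs)=\ell(g)+1$, and then we move to $T(gs)$.

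The key point is that this transition depends only on $T(g)$ and not on $g$ itself. This follows from the identity
$$
T(gs)=\{h\in W \mid sh\in T(g),\ \ell(sh)=\ell(h)+1\}\quad\text{whenever } s\in T(g).
$$
To justify it, use $\ell(gsh)\le \ell(g)+1+\ell(h)$ together with the fact that $s\in T(g)$ gives $\ell(gs)=\ell(g)+1$. For $h\in T(gs)$ this shows that both $\ell(gsh)=\ell(g)+\ell(sh)$ and $\ell(sh)=1+\ell(h)$ must hold with equality; the converse inclusion is immediate. Next, an induction on word length shows that a word $s_1\cdots s_k$ is accepted if and only if each prefix extends reducedly, i.e.\ the word is reduced. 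Hence the automaton recognizes exactly $\Red(W,S)$, which is therefore regular.

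The only step needing care is the identity above, i.e.\ the well-definedness of the transition function on cone types. It is routine, and it is exactly the content of the cited theorem.
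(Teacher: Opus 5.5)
Your proposal is correct and follows the paper's route: the paper also derives the corollary by combining the finiteness of cone types (Proposition~\ref{prop:cone}) with \cite[Theorem 1.2.9]{Ep+92}. Your explicit automaton on cone types, including the check that $T(gs)$ depends only on $T(g)$ and $s$, correctly unpacks the cited theorem and fills in the detail the paper leaves implicit.
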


\subsection{Quasi-Coxeter systems are biautomatic} \label{ss:Automatic} In the 1990's,  Davis--Shapiro~\cite{DS-au} and Brink--Howlett~\cite{BH-par} showed that every Coxeter group is automatic. In 2024, Osajda and Przytycki showed that every Coxeter group is in fact biautomatic~\cite{OP22}, and more biautomatic structures where exhibited in \cite{Sa25}. We refer the reader to \cite{Sa25} for the appropriate definitions of (bi)automatic structures and describe in this section how the biautomatic structure of Osajda and Przytycki on a Coxeter system can be ``transported'' to a quasi-Coxeter system with the same companion graph.
 
 \smallskip
 Let $(\overline{W}, \overline{S})$ be a Coxeter system. Recall that a \emph{reflection} is a conjugate of a generator in $\overline{S}$, and that associated to every reflection $r$ is a \emph{wall} $\W_r$, defined as the fixed point set of $r$ in $\Cay(\overline{W}, \overline{S})$. In particular, every wall separates the Cayley graph into two components, called \emph{half-spaces}. In \cite{OP22}, Osajda and Przytycki use the notion of walls and the weak order to construct a biautomatic structure on $(\overline{W}, \overline{S})$, namely the \emph{voracious language} $\overline{\V}$. Their construction goes as follows.
 
 Given $g \in \overline{W}$, let $\overline{\W}(g)$ be the set of walls separating $g$ from $\id$. They show that the set
 \[\overline{P}(g) = \{h \in \overline{W} \mid h \leq_R g \text{ and no wall in } \overline{\W}(g) \text{ separates } h \text{ from } \id\}\]
 has a unique largest element $\overline{p}(g)$ with respect to $(\overline{W},\leq_R)$. The \emph{voracious projection} 
 $$
 \overline{p}:\overline{W}\to \overline{W}
 $$
 is defined by  $g\mapsto \overline{p}(g)$. One of the properties of $\overline{p}$ that is relevant to us is the fact that for any isometry $\psi$ of the Cayley graph fixing the identity we have:
 $$
 \psi(\overline{p}(g)) = \overline{p}(\psi(g)).
 $$ 
 
 Osajda and Przytycki then define inductively on $\ell(g)$ a language $\overline{\mathcal{L}}_g$ of words over $\overline{S}$ representing $g$ by setting $\overline{\mathcal{L}}_{\id} = \{\varepsilon\}$ and
 \begin{align*}
 	\overline{\mathcal{L}}_g = \{uv : u \in \overline{\mathcal{L}}_{\overline{p}(g)}, v \in \overline{S}^* \text{ reduced word representing } \overline{p}(g)^{-1}g\}.
 \end{align*} 
 The \emph{voracious language} is then defined by
 \[\overline{\V} = \bigcup_{g \in \overline{W}} \overline{\mathcal{L}}_g,\]
 and Osajda and Przytycki show that $(\overline{\V} , \overline{S})$ is a biautomatic structure for $\overline{W}$.
 
 \smallskip
 Now, let $(W,S)$ be a quasi-Coxeter system with Coxeter companion $(\overline{W}, \overline{S})$ and $\varphi : \overline{W} \to W$ be a lattice isomorphism sending $\id_{\overline{W}}$ to $\id_W$. We define a \emph{wall} in $\Cay(W,S)$ to be $\varphi(\W)$ for some wall $\overline{\W} $ in $\Cay(\overline{W}, \overline{S})$. For an element $g \in W$, define then $\W(g)$ and $P(g)$ as in the case of $\overline{W}$; observe that $\W(g) = \varphi(\overline{\W}(\varphi^{-1}(g)))$ and $P(g) = \varphi(\overline{P}(\varphi^{-1}(g)))$. 
 
 We thus define the \emph{voracious projection $p:W\to W$} to be $p = \varphi\circ\overline{p}\circ\varphi^{-1}$, and note that $p(g)$ is the largest element of $P(g)$ with respect to $(W,\leq_R)$. Therefore, as in the case of $\overline{W}$, we are able to define inductively the language $\mathcal{L}_g$ for $g\in W$, and then  the \emph{voracious language $\V$ for $(W,S)$}, which satisfies  $\V=\varphi(\overline{\V})$.  We now explain why this language is regular and satisfies the fellow traveler property. 
 
 \subsubsection*{The fellow traveler property} Note that for any $\overline{g} \in \overline{W}$ we have $p(\varphi(\overline{g})) = \varphi(\overline{p}(\overline{g}))$. Hence, $\varphi$ maps $\overline{\LL}_{\overline{g}} \subset \overline{\V}$ to $\LL_{\varphi(\overline{g})} \subset \V$ bijectively. Now, let $\gamma$ be a path in $\Cay(\overline{W}, \overline{S})$ starting at a vertex $\overline{g}$ and labeled by a word in $\V$. Consider the isometry $f$ of $\Cay(\overline{W}, \overline{S})$ defined by $f(h) = \varphi^{-1}(\varphi(\overline{g})^{-1}\varphi(h))$ and let $t_{\overline{g}}$ denote translation by $\overline{g}$. Since $f \circ t_{\overline{g}}$ is an isometry fixing $\id_{\overline{W}}$, we have that $f(\gamma)$ is labeled by a word in $\overline{\V}$ and so $\varphi(f(\gamma))$ is labeled by a word in $\V$. Then, translating $\varphi(f(\gamma))$ by $\varphi(\overline{g})$, we obtain that $\varphi(\gamma)$ is labeled by a word in $\V$. A similar argument shows that $\varphi^{-1}$ maps any path in $\Cay(W,S)$ labeled by a word in $\V$ to a path in $\Cay(\overline{W}, \overline{S})$ labeled by a word in $\overline{\V}$. Hence, $\V$ satisfies the \emph{fellow traveler property} since $\overline{\V}$ does.
 
  \subsubsection*{Regularity of the voracious language}  Before proving regularity of $\V$, let us first introduce the appropriate notion of finite state automaton we work with. 
 
 \begin{defi}
 	A \emph{finite state automaton over S} (FSA for short) is a finite directed graph $\Gamma$ together with the following data:
 	\begin{enumerate}
 		\item a vertex set $A$ and an edge set $E \subseteq A \times A$;
 		\item a map $\phi : E \to \mathcal{P}(S^*)$ (the power set of $S^*$) assigning to each edge in $E$ a finite set of labels;
 		\item a vertex $a_0 \in A$ called the \emph{start state} of $\Gamma$;
 		\item a subset $A_{\infty} \subseteq A$ called the \emph{accept states} of $\Gamma$.
 	\end{enumerate}
 	The language $\LL(\Gamma)$ of the FSA $\Gamma$ is the set of words $v \in S^*$ which can be decomposed into subwords $v = v_0\ldots v_m$ labelling a directed edge-path $(e_0,\ldots,e_m)$ in $\Gamma$ from $a_0$ to a vertex in $A_{\infty}$, that is $v_i \in \phi(e_i)$ for $i=0,\ldots,m$. A subset $\mathcal{L} \subseteq S^*$ is a \emph{regular language} if it is the language of some FSA $\Gamma$ over $S$.
 \end{defi}
 
 In order to prove that $\V$ is a regular language, we can adapt the FSA $\overline{\A}$ that Osajda and Przytycki~\cite[Definition 6.2]{OP22} used to prove regularity of $\overline{\V}$. In particular, $\overline{\A}$ has as states the power set of the set of \emph{elementary walls} $\mathcal{E}$, that is, walls not separated from the identity by any other wall. Moreover, it has start state $\emptyset$ and every state is an accept state. The edges of $\overline{\A}$ from a state $A$ are then defined as follows. Suppose $\overline{w} \in \overline{W}$ satisfies:
 \begin{enumerate}
 	\item \label{FSA1} $\overline{w}$ is not separated from $\id_{\overline{W}}$ by any wall in $A$;
 	
 	\item \label{FSA2} $\overline{w}$ is separated from each wall in $A$ by another wall;
 	
 	\item \label{FSA3} $\overline{p}(\overline{w}) = \id_{\overline{W}}$.
 \end{enumerate}
 Then there is an edge $x$ from $A$ to the state $\overline{w}^{-1}\overline{\W}(\overline{w})$ with $\phi(x)$ consisting of all the reduced words representing $\overline{w}$.
 
 We adapt this to a FSA $\A$ as follows. The set of states of $\A$ is the power set of $\varphi(\mathcal{E})$. The start state is $\emptyset$ and all states are accept states. The transitions from a state $A$ are defined as follows. Suppose $w \in W$ satisfies:
 \begin{enumerate}
 	\item \label{FSA1-quasi} $w$ is not separated from $\id_W$ by any wall in $A$;
 	
 	\item \label{FSA2-quasi} $w$ is separated from each wall in $A$ by another wall;
 	
 	\item \label{FSA3-quasi} $p(w) = \id_W$.
 \end{enumerate}
 Then there is an edge $x$ from $A$ to the state $\varphi((\varphi^{-1}(w))^{-1}\overline{\W}(\varphi^{-1}(w)))$ with $\phi(x)$ consisting of all the reduced words representing $w$. The same proof as that of \cite[Proposition 6.1]{OP22} using the FSA $\A$ shows that $\V$ is regular. Since $\V$ satisfies also the fellow traveler property, we obtain the following result.

\begin{thm}
\label{thm:Automatic}
	Every quasi-Coxeter system is biautomatic. In particular, $(\V, S)$ is a biautomatic structure for $(W,S)$.
\end{thm}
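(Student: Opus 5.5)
Biautomaticity needs two things for $(\V,S)$: that $\V$ is a regular language over $S$ surjecting onto $W$, and that it satisfies the (two-sided) fellow traveler property. Both have been set up in the discussion preceding the statement. The plan is to transport each property of the Osajda--Przytycki voracious language $\overline{\V}$ through the directed-graph isomorphism $\varphi:\overline W\to W$. By Proposition~\ref{prop:QuasiCoxeter} and the definition of quasi-Coxeter systems, $\varphi$ is an isomorphism of Hasse diagrams fixing the identity. We choose $\varphi$ so that it also respects edge labels up to the bijection $\overline S\to S$ given by the companion graph. This is possible because the directed Cayley graph determines the labelling locally through the irreducible cycles (Corollary~\ref{cor:IrrUnique} and Proposition~\ref{prop:EMISJoin}), but it should be checked. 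With this choice, $\varphi$ sends paths labelled by a word $\overline v$ to paths labelled by the corresponding word $v$.

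\textbf{Surjectivity.} This follows from $\mathcal L_g=\varphi(\overline{\mathcal L}_{\varphi^{-1}(g)})\neq\emptyset$ for every $g\in W$. Each $\mathcal L_g$ consists of words representing $g$, by induction on $\ell(g)$ using $p(g)\leq_R g$.

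\textbf{Fellow traveler property.} Let $u,u'\in\V$ with $d(\bar u,\bar u')\le1$, where $\bar u$ and $\bar u'$ denote the elements of $W$ represented by $u$ and $u'$. First I would show that $\varphi$ is an isometry of the Cayley graphs. Distances are determined by the undirected underlying graph, and $\varphi$ is a graph isomorphism, so this holds. The second ingredient is the invariance argument given above. Using the isometry $f\circ t_{\overline g}$ and the identity $\psi\circ\overline p=\overline p\circ\psi$ for label-preserving isometries fixing $\id$, one shows that a path in $\Cay(\overline W,\overline S)$ is labelled by a word of $\overline\V$ if and only if its image under $\varphi$ is labelled by a word of $\V$, at every basepoint. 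The two-sided fellow traveler property for $\overline\V$, for both left and right multiplication by generators, then pulls back with the same constant. Left multiplication is handled by translation invariance of the language in both groups.

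\textbf{Regularity.} I would verify that the automaton $\A$ defined above is finite, and that its language is exactly $\V$. Finiteness comes from the finiteness of the set of elementary walls, which is carried over by $\varphi$. For the language, I would rerun the proof of \cite[Proposition 6.1]{OP22}, checking that each ingredient transports. Separation by walls transports because $\varphi$ maps half-spaces to the sets defining walls in $\Cay(W,S)$. Condition (\ref{FSA3-quasi}) transports because $p=\varphi\circ\overline p\circ\varphi^{-1}$. Translating states by $w^{-1}$ corresponds to $\overline w^{-1}$ through the conjugated action.

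I expect the main obstacle to be this last compatibility. The automaton update uses left translation by group elements, and $\varphi$ is not a group homomorphism. So I must show that $\varphi(\overline g h)=\varphi(\overline g)\varphi(h)$ holds on the level of walls. Equivalently, the map $f\circ t_{\overline g}$ must send walls to walls and the elementary walls of $\overline g$ to those of the basepoint $\id$. I would first try to prove that walls of $\Cay(\overline W,\overline S)$ are characterized purely graph-theoretically, for instance as equivalence classes of edges under opposite-edge relations in irreducible cycles. A characterization of this kind is automatically preserved by any graph isomorphism.
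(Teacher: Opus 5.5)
Your route is the paper's: transport $\overline{\V}$ along $\varphi$, get fellow travelling from the isometry $f\circ t_{\overline g}$, and get regularity by rerunning \cite[Proposition~6.1]{OP22}. But the step where you choose $\varphi$ to respect labels up to a bijection $\sigma:\overline S\to S$ is false, and your fellow-traveller argument leans on it. Such a $\varphi$ would send the path from $\id$ labelled $\overline s_1\cdots\overline s_k$ to the path labelled $\sigma(\overline s_1)\cdots\sigma(\overline s_k)$. One path would close up iff the other does, so $\sigma$ would extend to an isomorphism $\overline W\to W$, i.e.\ $(W,S)$ would be a Coxeter system. For instance, in Example~\ref{ex:EMIS-A2} the hexagons read $(s_1s_2s_3)^2$ and carry three labels, while those of $\tilde A_2$ read $(st)^3$. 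Hence $\V$ is not a relabelling of $\overline{\V}$. The correspondence $\overline{\LL}_{\overline g}\leftrightarrow\LL_{\varphi(\overline g)}$ is only a bijection of paths, which still suffices for surjectivity.

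The damage is in the fellow-traveller step. You invoke $\psi\circ\overline p=\overline p\circ\psi$ only for label-preserving $\psi$ fixing $\id$, but the identity is the only such automorphism. Meanwhile $f\circ t_{\overline g}=\varphi^{-1}\circ t_{\varphi(\overline g)}^{-1}\circ\varphi\circ t_{\overline g}$ is not the identity in general, since otherwise $\varphi$ would be a homomorphism. In Example~\ref{ex:EMIS-A2}, for $\varphi(\overline g)=s_1s_2$ it is a rotation of order $3$ of the honeycomb about $\id$. What you need, and what the paper asserts, is that $\overline p$ commutes with \emph{every} graph automorphism fixing $\id$. Your closing idea supplies this. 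Since $d(x,y)$ counts the walls separating $x$ and $y$, the wall crossed by an edge $\{a,b\}$ is the cut between $\{x: d(x,a)<d(x,b)\}$ and $\{x: d(x,b)<d(x,a)\}$. So every automorphism fixing $\id$ permutes walls and preserves $\leq_R$, hence preserves $\overline{\W}(g)$, $\overline P(g)$ and $\overline p$.

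The same confusion affects your regularity plan. The goal ``$\varphi(\overline g h)=\varphi(\overline g)\varphi(h)$ on the level of walls'' cannot be proved. Elements of $\overline W$ are determined by the walls separating them from $\id$, so it would force $f\circ t_{\overline g}=\mathrm{id}$. It is also not equivalent to the true and sufficient fact that $f$ permutes walls.

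Likewise, translating a state by $w^{-1}$ in $W$ does not match translating by $\overline w^{-1}$ in $\overline W$, where $\overline w=\varphi^{-1}(w)$. One has $\varphi^{-1}\big(w^{-1}\varphi(X)\big)=(f\circ t_{\overline w})(\overline w^{-1}X)$. For $w=s_1s_2$ in Example~\ref{ex:EMIS-A2}, the sets $\overline w^{-1}\overline{\W}(\overline w)$ and $\varphi^{-1}\big(w^{-1}\W(w)\big)$ are two different pairs of elementary walls, meeting at different vertices of the base triangle. The breakpoints of a word of $\V$ are $g_j=\varphi(\overline g_j)$, and its pieces represent $g_j^{-1}g_{j+1}$, not $\varphi(\overline g_j^{-1}\overline g_{j+1})$. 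So the automaton must update states by the action of $W$ itself, $A\mapsto w^{-1}\W(w)$. This is legitimate because the transported walls are graph-intrinsic and are therefore permuted by left multiplication in $W$. With that change, the proof of \cite[Proposition~6.1]{OP22} runs inside $\Cay(W,S)$. The update $\varphi\big((\varphi^{-1}(w))^{-1}\overline{\W}(\varphi^{-1}(w))\big)$ written in the text has the same defect and should be replaced by $w^{-1}\W(w)$.
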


\subsection{Rank $3$ \EMIS are quasi-Coxeter systems} \label{ss: rank 3 EMIS} The aim of this section is to prove the following theorem. 

\begin{thm} \label{rank3-theorem}
    Let $(W,S)$ be an involution system of rank $3$. Then the following statements are equivalent:
    \begin{enumerate}
     \item $(W,S)$ is an \EMIS; \label{rank3-theorem-1}
     \item $(W,S)$ admits a $2$-recognizable presentation; \label{rank3-theorem-2}
     \item $(W,S)$ admits one of the presentations in Theorem~\ref{thm:rk3-reco}; \label{rank3-theorem-3}
     \item \label{rank3-theorem-4} $(W,S)$ is a quasi-Coxeter system.
     \end{enumerate}
     In particular, every \EMIS of rank $3$ is a discrete subgroup of isometries of the sphere, Euclidean plane or hyperbolic plane.
\end{thm}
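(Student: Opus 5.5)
We prove the cycle of implications (1)$\Rightarrow$(2)$\Rightarrow$(3)$\Rightarrow$(4)$\Rightarrow$(1). The first three are free from earlier results. (1)$\Rightarrow$(2) is Theorem~\ref{thm:EMIS is recognizable}. (2)$\Leftrightarrow$(3) is Theorem~\ref{thm:rk3-reco}. (4)$\Rightarrow$(1) is Proposition~\ref{prop:QuasiCoxeter}. So the work is (3)$\Rightarrow$(4), together with the geometric statement, which will come out of the same construction.

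For (3)$\Rightarrow$(4), treat each family of Theorem~\ref{thm:rk3-reco} separately. Family (iv) consists of Coxeter systems, so there is nothing to do. For (i), (ii), (iii), we first read off the intended Coxeter companion $(\overline W,\overline S)$ from the cycles of the relators at $\id$. The relator $(abc)^{2m}$ gives three cycles through $\id$ of length $6m$, so the companion is the triangle group with all labels $3m$. For $(abacbc)^m$, the $2$-factors at $\id$ give labels $3m$ on each pair. For $(abac)^m$ with $(bc)^n$, the word $(abac)^m$ has length $4m$ and passes through $\id$ via the pairs $\{a,b\}$ and $\{a,c\}$, giving labels $2m,2m$ on those pairs and $n$ on $\{b,c\}$. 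We then realize $W$ geometrically. Let $\X\in\{\Sb^2,\E^2,\Hb^2\}$ be chosen by the sign of $\frac{1}{p}+\frac{1}{q}+\frac{1}{r}-1$ for the companion labels $(p,q,r)$. Take the Coxeter triangle $T$ of $(\overline W,\overline S)$ and its tessellation. For each presentation, define $a,b,c$ as explicit isometries of $\X$. Each is either a reflection in a side of $T$, or a rotation by $\pi$ about a suitable point on a side of $T$ (for instance, the midpoint of a side in case (i)). These must act freely on the chambers, with $\id$ sent to $T$ and $s$ sent to the chamber adjacent to $T$ across the corresponding side. One checks directly that the relators hold, using angle sums around vertices of the tessellation. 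This gives a homomorphism from the presented group onto a group $W'$ acting simply transitively on chambers. The Cayley graph of $W'$ is then the dual graph of the tessellation, which is the Cayley graph of $\overline W$ with different labels.

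To conclude that $W\to W'$ is an isomorphism, compare presentations. The dual tessellation is simply connected, so by \cite[Lemma I.8.9]{BridsonHaefliger} $W'$ is presented by the labels of the $2$-cells. These are the polygons around the vertices of the tessellation, and by construction they are cyclic shifts of the relators in $R_0$ or their inverses. Hence the presented group surjects onto $W'$ and also satisfies exactly these relations, so the two groups are isomorphic. Then $\Cay(W,S)$ is the dual graph of the tessellation. Orienting edges by distance to $\id$ gives $\oCay(W,S)$, which coincides with the Hasse diagram of $(\overline W,\leq_R)$ because the graph metrics agree. This is (4), and it also realizes $W$ as a discrete group of isometries of $\X$ generated by reflections and rotations of angle $\pi$.

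The main obstacle is choosing the isometries in cases (i)–(iii), especially the small cases where the chamber-adjacency bookkeeping is delicate. These are $m=1$ in (i) (the $\tilde A_2$ case of Example~\ref{ex:EMIS-A2}) and the spherical cases, for example $(abac)^1$ with small $n$. I would fix the labelling convention by first doing the Euclidean example of Example~\ref{ex:EMIS-A2}, and then deforming the angles to $\pi/(3m)$ etc. I would verify the freeness of the action and the vertex relations case by case, listing the outcomes in tables.
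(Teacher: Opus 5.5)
You have the paper's reduction and the paper's construction. The three quoted implications are exactly the ones used there. For (3)$\Rightarrow$(4) the paper also lets $a,b,c$ act on the companion's triangle by side reflections and by half-turns about side midpoints, so that the Cayley graph of $(W,S)$ becomes the dual graph of the tessellation.

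Where you differ is in how $W\cong W'$ is obtained. The paper invokes Poincar\'e's Polyhedron Theorem, which gives at once that $W'$ is discrete, that the triangle is a fundamental domain, and that the cycle relations present $W'$. You instead use the existing Coxeter tessellation and read a presentation off its simply connected dual complex.

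That step presupposes that $W'$ acts \emph{simply} transitively on chambers. Otherwise the dual graph is not the Cayley graph of $W'$, and the labels of the $2$-cells say nothing about a presentation of $W'$. You never justify this. Checking the relators only gives the surjection $W\to W'$, and ``verify freeness case by case'' is not a method for infinite families of hyperbolic groups. This is the genuine gap, and it is exactly what Poincar\'e's theorem supplies in the paper.

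Your route can be completed without Poincar\'e. First, a half-turn about a point of a side preserves the Coxeter tessellation only if that point is the midpoint and $T$ is symmetric in the perpendicular bisector of that side. This holds here: $T$ is equilateral in (i) and (ii), and isosceles with base $AB$ in (iii). So the ``suitable point'' must be the midpoint, and this symmetry must be stated.

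Then, for a path of chambers $T=C_0,C_1,\dots,C_k$, set $g_0=1$ and $g_{i+1}=g_is'$, where $s$ is the label of the side $g_i^{-1}(C_i\cap C_{i+1})$ of $T$.
\begin{itemize}
\item A backtrack does not change the final element, since $s'^2=1$.
\item A loop around a vertex $g_i(v)$ multiplies $g_i$ by the word read around $v$ starting from $T$. That word is a cyclic shift of a relator or of its inverse, hence trivial in $W'$.
\item Simple connectivity of the dual complex then makes $g_k$ depend only on $C_k$.
\item Every $g\in W'$ is the $g_k$ of the path spelled by a word for $g$. So $gT=T$ forces $g=1$.
\end{itemize}
Running the same bookkeeping with words in the presented group $W$ shows directly that $W\to W'$ is injective. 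Because the tessellation is given in advance, this is more elementary than appealing to Poincar\'e's theorem.

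You still need to fix the isometries in (ii) and (iii). The ones that work are: for (ii), half-turns about the midpoints of $AB$ and $AC$ together with the reflection in $BC$; for (iii), the half-turn about the midpoint of $AB$ together with the reflections in $BC$ and $AC$. You also need to treat $(m,n)=(1,\infty)$ separately. There the ``triangle'' degenerates to a Euclidean half-strip and the Cayley graph is the ladder.
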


\begin{cor} \label{cor: companion rank 3}
	The companion graph of an \EMIS of rank $3$ completely determines its lattice structure.
\end{cor}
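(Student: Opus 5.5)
The statement to prove is Corollary~\ref{cor: companion rank 3}: for an \EMIS of rank $3$, the companion graph determines the weak order. I would deduce it directly from Theorem~\ref{rank3-theorem}, specifically from the implication (\ref{rank3-theorem-1})$\Rightarrow$(\ref{rank3-theorem-4}). Let $(W,S)$ be an \EMIS of rank $3$. By that theorem it is a quasi-Coxeter system. So there is a Coxeter system $(\overline W,\overline S)$ such that $\oCay(W,S)$ is isomorphic, as a directed graph, to the Hasse diagram of $(\overline W,\leq_R)$. Proposition~\ref{prop:QuasiCoxeter} then gives an isomorphism of complete meet-semilattices $(W,\leq_R)\cong(\overline W,\leq_R)$. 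What remains is to show that this $(\overline W,\overline S)$ is the Coxeter system whose type is the companion graph of $(W,S)$, as asserted in the definition of quasi-Coxeter systems. Since a Coxeter system is determined up to isomorphism by its Coxeter graph, this would finish the proof.

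For the identification step, the plan is to show that the directed-graph isomorphism $\psi:\oCay(W,S)\to\oCay(\overline W,\overline S)$ preserves both the lattice operations and irreducible cycles. First, $\psi$ sends $\id$ to $\id$, the unique source, and it sends atoms to atoms, so it induces a bijection $S\to\overline S$. Second, the graphs are isomorphic as undirected graphs, and the cycle space and cycle lengths are graph invariants. Hence irreducible cycles through $\id$ correspond bijectively, with the same lengths.

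Take $s,t\in S$. By Corollary~\ref{cor:IrrUnique} and Proposition~\ref{prop:EMISJoin}, there is at most one irreducible cycle in $\Irr_\id$ through $\id$, $s$ and $t$, and half its length is the companion value $m(s,t)$. The image of this cycle is the unique irreducible cycle through $\id$, $\psi(s)$ and $\psi(t)$ in $\Cay(\overline W,\overline S)$. In a Coxeter system that cycle is the dihedral cycle of length $2\,\ord(\psi(s)\psi(t))$, by the remark following Definition~\ref{def:Companion}. If no irreducible cycle exists on one side, none exists on the other, so both values are $\infty$. Therefore $\ord_{\overline W}(\psi(s)\psi(t))=m(s,t)$, and $(\overline W,\overline S)$ has type equal to the companion graph.

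Consequently, any two rank-$3$ \EMIS with the same companion graph have weak orders isomorphic to that of the same Coxeter system, and so to each other. This is the claim.

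The main obstacle is purely bookkeeping. The irreducible cycles used to define the companion graph are those of $\Cay(W,S)$, and I must check that the isomorphism of Hasse diagrams really yields an isomorphism of the underlying undirected Cayley graphs. It does: by Proposition~\ref{prop:EIS}, in both systems the Hasse diagram is all of $\oCay$, so forgetting orientations recovers the Cayley graphs. Irreducibility is defined purely graph-theoretically, so it transfers.
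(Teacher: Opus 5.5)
Your proof is correct and follows the paper's route: Theorem~\ref{rank3-theorem} shows that every rank-$3$ \EMIS is a quasi-Coxeter system, and Proposition~\ref{prop:QuasiCoxeter} then transfers the lattice structure from the Coxeter companion. The one step you handle differently is identifying that Coxeter system with the one whose type is the companion graph. The paper reads this off its explicit triangle tilings in the proof of Theorem~\ref{rank3-theorem} and builds it into the definition of quasi-Coxeter systems, whereas you transport irreducible cycles along the graph isomorphism, which proves it abstractly for any quasi-Coxeter system.
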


The classification of \EMIS of rank at most 3, with their companion graphs and their Coxeter graphs, appears in Tables~\ref{table: Classification in rank 3}~and~\ref{table2: Classification in rank 3}.

\begin{proof}[Proof of Theorem~\ref{rank3-theorem}]
Note that (\ref{rank3-theorem-1}) implies (\ref{rank3-theorem-2}) follows by Theorem \ref{thm:EMIS is recognizable}, (\ref{rank3-theorem-2}) implies (\ref{rank3-theorem-3}) follows by Theorem \ref{thm:rk3-reco} and (\ref{rank3-theorem-4}) implies (\ref{rank3-theorem-1}) follows from the definition of a quasi-Coxeter system. Hence, it remains to show that (\ref{rank3-theorem-3}) implies (\ref{rank3-theorem-4}), the proof of which is based on {\em Poincaré's Polyhedron Theorem} (see, for instance, \cite[\RomanNumeralCaps{4}.H]{Kleinian}).

Let $\Sb^2$ denote the sphere, $\E^2$ the Euclidean plane, and $\Hb^2$ the hyperbolic plane. To prove (\ref{rank3-theorem-3}) implies (\ref{rank3-theorem-4}), we consider $(W,S)$ with each of the 2-recognizable presentations \ref{type1}, \ref{type2} and \ref{type3} from Theorem \ref{thm:rk3-reco}. Then, we consider a triangle $\Delta$ in $\X = \Sb^2, \E^2$ or $\Hb^2$ with angles arising from the considered presentation. We then define an action by isometries on $\X$ for which $\Delta$ is the fundamental domain and such that the associated tiling of $\X$ is the same as the one obtained by the  action on $\Delta$ of $\overline W$, the discrete reflection group of rank $3$ associated to its Coxeter companion; the set $\overline S$ of simple reflections for $\overline W$ being constituted of the reflections across the sides of $\Delta$. Then, $\oCay(W,S)$ and $\oCay(\overline W,\overline S)$ will be the same (the orientation of edges being obtained by the geodesic distance from $\Delta$, which is labeled by $\id$). By definition, $(W,S)$ is therefore a quasi-Coxeter system.

\medskip

Consider first the case of presentation \ref{type1}. Let $\Delta$ be an equilateral triangle in $\X$ with vertices $A,B,C$ and internal angles $\frac{\pi}{3m}$ where $m \geq 1$. Note that $\Delta$ lies in $\E^2$ if $m = 1$ and in $\Hb^2$ otherwise. Let $a'$ denote the rotation by $\pi$ around the midpoint of $AB$, $b'$ denote the rotation by $\pi$ around the midpoint of $BC$ and $c'$ denote the rotation by $\pi$ around the midpoint of $AC$. In particular, these are \emph{side pairing transformations}, that is they map some edge of $\Delta$ to another edge of $\Delta$ (the same one in this case). 

Observe that $a'$ maps the edge $AB$ to itself and $A$ to  $a'(A) = B$. Moreover, $B$ belongs to exactly one other edge, $BC$, of $\Delta$, making an internal angle of $\alpha_1 = \frac{\pi}{3m}$ at $B$. Similarly, $b'$  maps $BC$ to itself and $B$ to the other vertex $g_2(B) = C$ of $BC$. Now, $C$ belongs to exactly one other edge, $AC$, of $\Delta$ making an internal angle of $\alpha_2 = \frac{\pi}{3m}$ at $C$.  We have again $c'(C) = A$ and $\alpha_3 = \frac{\pi}{3m}$. Hence, $c' \circ b' \circ a'(A) = A$ and $\alpha_1+\alpha_2+\alpha_3 = \frac{\pi}{m}$; the isometry  $h = c' \circ b' \circ a'$ is called a \emph{cycle transformation}. Since $AB$ makes an angle of $\frac{\pi}{m}$ with $h(AB)$, we need to apply this transformation $2m$ times to get back to $AB$. Hence, $h^{2m}(\Delta) = \Delta$ and we obtain the relation $(c'\circ b'\circ a')^{2m} = \text{Id}_{\X}$. We illustrate this in Figure \ref{fig: action type 1} for the case $m=1$. 
%Now, given a vertex $v_1$ of $\Delta$ and an edge $p_1$ of $\Delta$ containing $v_1$, we consider the rotation $g_1$ of angle $\pi$ with center the midpoint of $p_1$. In particular, $g_1$  is a side pairing transformation  mapping $p_1$ to itself and $v_1$ to the other vertex $g_1(v_1) = v_2$ of $p_1$. Moreover, $v_2$ belongs to exactly one other edge $p_2$ of $\Delta$, making an internal angle of $\alpha_1 = \frac{\pi}{3m}$ at $v_2$. Similarly, there is a side pairing transformation $g_2$ mapping $p_2$ to itself and $v_2$ to the other vertex $g_2(v_2) = v_3$ of $p_2$, which are rotations of angle $\pi$ with centers the midpoint of the other two edges. 

%Moreover, $v_3$ belongs to exactly one other edge $p_3$ of $\Delta$ making an internal angle of $\alpha_2 = \frac{\pi}{3m}$ at $v_3$. Note that $p_3$ is different from $p_1$ since $p_1$ does not contain $v_3$. Finally, doing this again yields $g_3(v_3) = X_1$, $p_4 = p_1$ and $\alpha_3 = \frac{\pi}{3m}$. Hence, $g_3 \circ g_2 \circ g_1(v_1) = v_1$ where the $g_i$ are distinct and $\alpha_1+\alpha_2+\alpha_3 = \frac{\pi}{m}$; the isometry  $h = g_3 \circ g_2 \circ g_1$ is called a \emph{cycle transformation}. Since $p_1$ makes an angle of $\frac{\pi}{m}$ with $h(p_1)$, we need to apply this transformation $2m$ times to get back to $p_1$. Hence, $h^{2m}(\Delta) = \Delta$ and we obtain the relation $(abc)^{2m} = \id$. We illustrate this in Figure \ref{fig: action type 1} for the case $m=1$.

\begin{figure}[h!]
	\centering
	\begin{tikzpicture}[scale=2]
		
		\def\r{1} % hexagon side length
		
		% --- automated coloring macro ---
		\newcommand{\hex}[2]{ 
			\begin{scope}[shift={#1}, rotate=#2]
				\foreach \i in {0,...,5} { \coordinate (V\i) at ({60*\i}:\r); }
				\coordinate (O) at (0,0);
				\foreach \i in {0,...,5} {
					\pgfmathtruncatemacro{\j}{mod(\i+1,6)}
					\pgfmathtruncatemacro{\rindex}{mod(\i + #2/60 + 12, 3)}
					\ifcase\rindex \def\rcol{red!70!black} \or \def\rcol{blue} \else \def\rcol{green!60!black} \fi
					\draw[thick, \rcol] (O) -- (V\i);
					\pgfmathtruncatemacro{\bindex}{mod(\i + 2 + #2/60 + 12, 3)}
					\ifcase\bindex \def\bcol{red!70!black} \or \def\bcol{blue} \else \def\bcol{green!60!black} \fi
					\draw[thick, \bcol] (V\i) -- (V\j);
				}
			\end{scope}
		}
		
		% Render the base hexagons
		\hex{(0,0)}{0}
		\hex{(1,0)}{60}
		\hex{(0.5,{sqrt(3)/2})}{-60}
		
		% Define coordinates
		\coordinate (A) at (0,0);
		\coordinate (B) at (1,0);
		\coordinate (C) at (0.5,{sqrt(3)/2});
		
		\coordinate (Mab) at ($(A)!0.5!(B)$); % Horizontal (Red)
		\coordinate (Mbc) at ($(B)!0.5!(C)$); % -60 deg (Green)
		\coordinate (Mca) at ($(C)!0.5!(A)$); % +60 deg (Blue)

		\tikzset{
			rotarrow/.style={
				{Stealth[length=3pt, bend]}-, 
				>=Stealth, 
				thin
			}
		}
		
		% Point a (Bottom, Horizontal Line)
		% Gap centered at 270 deg
		\fill (Mab) circle (0.5pt);
		\draw[rotarrow] ($(Mab)+(250:0.12)$) arc (250:-70:0.12); 
		\node[below=0.4] at (Mab) {$a$};
		
		% Point b (Right, Green Line at -60 deg)
		% Gap centered at 30 deg (perpendicular to -60)
		\fill (Mbc) circle (0.5pt);
		\draw[rotarrow] ($(Mbc)+(10:0.12)$) arc (10:-310:0.12);
		\node[right=0.4] at (Mbc) {$b$};
		
		% Point c (Left, Blue Line at +60 deg)
		% Gap centered at 150 deg (perpendicular to 60)
		\fill (Mca) circle (0.5pt);
		\draw[rotarrow] ($(Mca)+(130:0.12)$) arc (130:-190:0.12);
		\node[left=0.4] at (Mca) {$c$};
		
	\end{tikzpicture}
	\caption{The action of $W$ for $m=1$ in presentation \ref{type1}.}
	\label{fig: action type 1}
\end{figure}

We obtain therefore a surjective morphism of groups from $W$ to $W'=\langle a',b',c'\rangle$ mapping $a$ to $a'$, $b$ to $b'$ and $c$ to $c'$. By Poincaré's Polyhedron Theorem, the group $W'$ acts on a locally finite tiling $\mathcal{T}$ of $\X$ by translates of $\Delta$, and has presentation \ref{type1}. In particular, $W$ is isomorphic to $W'$, $W$ acts on  $\mathcal{T}$ transitively, and the generators in $S$ map a triangle to its adjacent triangles. Hence, the Cayley graph of $(W,S)$ is dual to the tiling $\mathcal{T}$, so it is isomorphic (as an unlabeled graph) to the Cayley graph of the triangle group $(3m,3m,3m)$, the Coxeter companion of $(W,S)$. See, for instance, Figure \ref{fig:EMIS-A2} for the Cayley graph of $(W,S)$ when $m=1$. Therefore, $(W,S)$ is a quasi-Coxeter system.  

\medskip

Now, we consider the case of presentation \ref{type2}. Again, let $\Delta$ be an equilateral triangle in $\X$ with vertices $A,B,C$ and internal angles $\frac{\pi}{3m}$ where $m \geq 1$. Let $a'$ denote the rotation by $\pi$ around the midpoint of $AB$, $b'$ denote the reflection across the side $BC$ and $c'$ denote the rotation by $\pi$ around the midpoint of $AC$.

%The difference in this case is that the side pairing transformation $b'$ associated to the edge $BC$ fixes both endpoints $B$ and $C$. Hence, applying the same construction as before starting with a vertex $A$ and edge $AB$ of $\Delta$, we obtain $g_6 \circ g_5 \circ g_4 \circ g_3 \circ g_2 \circ g_1(X_1) = X_1$ where $g_1 = g_3 \neq g_4 = g_6$ are distinct from $g_2=g_5$ if $p_1 \neq BC$, or $g_1=g_4$ distinct from $g_2=g_6\neq g_3=g_5$ if $p_1 = BC$. Moreover, $\alpha_1 + \ldots + \alpha_6 = \frac{2\pi}{m}$. Hence, $(g_6 \circ \ldots \circ g_1)^m(\Delta) = \Delta$ and we obtain the relation $(abacbc)^m = \id$. We illustrate this in Figure \ref{fig: action type 2} for the case $m=1$.

The difference in this case is that the side pairing transformation $b'$ associated to the edge $BC$ fixes both endpoints $B$ and $C$. Hence, applying the same construction as before starting with a vertex $A$ and edge $AB$ of $\Delta$, we obtain $h(A)=c' \circ b' \circ c' \circ a' \circ b' \circ a'(A) = A$. Moreover, the angle between $h(AB)$ and $AB$ is $6\cdot\frac{\pi}{3m} = \frac{2\pi}{m}$. Hence, $h^m(\Delta) = \Delta$. As in the first case, we conclude by Poincar\'e's Polyhedron Theorem that there is an isomorphism between $W$ and  $W'=\langle a',b',c'\rangle$, and that $W$ acts on the associated tiling $\mathcal{T}$ transitively, where generators map a triangle to its adjacent triangles. Hence, $(W,S)$ is also a quasi-Coxeter system. We illustrate this in Figure \ref{fig: action type 2} for the case $m=1$.

\begin{figure}[h!] 
	\centering 
	\begin{tikzpicture}[scale=2] 
		
		\def\r{1} 
		\def\h{0.8660254} % sqrt(3)/2
		
		% --- Base Hexagon Logic ---
		\newcommand{\hex}[2]{ 
			\begin{scope}[shift={#1}, rotate=#2] 
				\foreach \i in {0,...,5} { 
					\coordinate (V\i) at ({60*\i}:\r); 
				} 
				\coordinate (O) at (0,0); 
				\foreach \i in {0,...,5} { 
					\pgfmathtruncatemacro{\j}{mod(\i+1,6)} 
					\draw[thick, gray!30] (O) -- (V\i) -- (V\j) -- cycle; 
				} 
			\end{scope} 
		} 
		
		% Draw the ghost hexagons
		\hex{(0,0)}{0} 
		\hex{(1,0)}{60} 
		\hex{(0.5,\h)}{-60} 
		
		% --- Defined Vertices --- 
		\coordinate (A) at (0,0); 
		\coordinate (B) at (1,0); 
		\coordinate (C) at (0.5,\h); 
		
		% --- Remaining Triangle Vertices --- 
		\coordinate (C1) at (0.5,-\h);   % Bottom
		\coordinate (D1) at (1.5,\h);    % Right
		\coordinate (E1) at (-0.5,\h);   % Left
		\coordinate (F1) at (1.5,-\h);
		\coordinate (G1) at (-0.5,-\h);
		\coordinate (H1) at (-1,0);
		\coordinate (I1) at (2,0);
		\coordinate (J1) at (0,2*\h);
		\coordinate (K1) at (1,2*\h);
		
		% --- Draw the segments ---
		\draw[red!70!black, thick]   (A) -- (B); 
		\draw[green!60!black, thick] (B) -- (C); 
		\draw[blue, thick]  (C) -- (A); 
		
		% Bottom Triangle
		\draw[blue, thick]  (B) -- (C1); 
		\draw[green!60!black, thick] (C1) -- (A); 
		
		% Right Triangle 
		\draw[blue, thick]   (D1) -- (C);    
		\draw[red!70!black,  thick] (B) -- (D1); 
		
		% Left Triangle
		\draw[red!70!black, thick]   (E1) -- (C);    
		\draw[green!60!black, thick] (A) -- (E1); 
		
		\draw[red!70!black, thick]   (C1) -- (F1);    
		\draw[green!60!black, thick] (B) -- (F1); 
		
		\draw[blue, thick]   (C1) -- (G1);    
		\draw[red!70!black,  thick] (A) -- (G1);
		
		\draw[green!60!black, thick]   (H1) -- (G1);    
		\draw[blue, thick] (A) -- (H1); 
		
		\draw[red!70!black,  thick]   (H1) -- (E1);
		
		\draw[blue, thick]   (E1) -- (J1);    
		\draw[green!60!black, thick] (C) -- (J1);
		
		\draw[green!60!black, thick]   (D1) -- (K1);    
		\draw[red!70!black, thick] (C) -- (K1); 
		
		\draw[blue, thick]   (J1) -- (K1);
		
		\draw[green!60!black, thick]   (D1) -- (I1);    
		\draw[blue, thick] (B) -- (I1); 
		
		\draw[red!70!black, thick]   (I1) -- (F1);
		
		\tikzset{
			rotarrow/.style={
				{Stealth[length=3pt, bend]}-, 
				>=Stealth, 
				thin
			}
		}
		
		\tikzset{
			refarrow/.style={
				<->,               
				{Stealth[length=3pt, bend]}-{Stealth[length=3pt, bend]}, 
				>=Stealth,          
				thin
			}
		}
		
		% Point a (Bottom, Horizontal Line)
		% Gap centered at 270 deg
		\fill (Mab) circle (0.5pt);
		\draw[rotarrow] ($(Mab)+(250:0.12)$) arc (250:-70:0.12); 
		\node[below=0.4] at (Mab) {$a$};
		
		% Point b (Reflection: Double-headed arrow perpendicular to BC)
		\coordinate (Mbc) at ($(B)!0.5!(C)$); 
		\draw[refarrow] ($(Mbc)+(210:0.15)$) -- ($(Mbc)+(30:0.15)$); 
		\node[right=0.4] at (Mbc) {$b$};
		
		% Point c (Left, Blue Line at +60 deg)
		% Gap centered at 150 deg (perpendicular to 60)
		\fill (Mca) circle (0.5pt);
		\draw[rotarrow] ($(Mca)+(130:0.12)$) arc (130:-190:0.12);
		\node[left=0.4] at (Mca) {$c$};
		
	\end{tikzpicture} 
	\caption{The action of $W$ for $m=1$ in presentation \ref{type2}.}
	\label{fig: action type 2}
\end{figure}

%We can then conclude, by Poincaré's Polyhedron Theorem again, that the group $W$ presented by \ref{type2} has Cayley graph dual to the tiling $\mathcal{T}$ of $\X$ by translates of $\Delta$. Hence, $(W,S)$ is also a quasi-Coxeter system.  

\medskip

Finally, we analyze presentation \ref{type3}. Let $\Delta$ be an isosceles triangle in $\X$ with vertices $A,B,C$ and internal angles $\frac{\pi}{2m}$ at $A$ and $B$, and $\frac{\pi}{n}$ at $C$, where $m \geq 1$ and $n \geq 2$ or $\infty$. For $n\neq \infty$, note that $\Delta$ lies in $\Sb^2$ if $(m,n) = (1,n)$, in $\E^2$ if $(m,n) = (2,2)$, and in $\Hb^2$ otherwise. For $n=\infty$ and $m \neq 1$, $\Delta$ is a triangle in $\Hb^2$ with one ideal vertex $C$, and the case $m=1$ is degenerate and treated later. Let $a'$ denote the rotation by $\pi$ around the midpoint of $AB$, $b'$ denote the reflection on the side $BC$ and $c'$ denote the reflection on the side $AC$.

Since $b'$ and $c'$ are both reflections and thus fix $C$, in the case of $n\neq \infty$ we obtain for $C$ a cycle transformation $h=c' \circ b' \circ c' \circ b'$. Moreover, the angle between $h(BC)$ and $BC$ is $4\cdot\frac{\pi}{n}  = \frac{4\pi}{n}$. This leads to the relation $(c'\circ b')^n = \text{Id}_{\X}$ for $n \geq 2$. If $n=\infty$, then $b'\circ c'$ has infinite order since $C$ is an ideal vertex.

Now by considering the edge $AB$, we obtain the cycle transformation $h=c' \circ a' \circ b' \circ a'$ with angle between $h(AB)$ and $AB$ equal to $4\cdot \frac{\pi}{2m}= \frac{2\pi}{m}$. This leads to the relation $(c' \circ a' \circ b' \circ a')^m = \text{Id}_{\X}$. We illustrate this in Figure \ref{fig: action type 3} for the case $(m,n)=(2,2)$.

\begin{figure}[h!]
	\centering
	\begin{tikzpicture}[scale=2]
		
		% --- unit length for small square ---
		\def\u{1}
		
		% --- 3x3 small squares ---
		\foreach \i in {0,1,2} {
			\foreach \j in {0,1} {
				
				% --- horizontal edges (red) ---
				\draw[thick, red!70!black] (\i*\u,\j*\u) -- ++(\u,0);
				\draw[thick, red!70!black] (\i*\u,\j*\u+\u) -- ++(\u,0);
				
				% --- vertical edges (red) ---
				\draw[thick, red!70!black] (\i*\u,\j*\u) -- ++(0,\u);
				\draw[thick, red!70!black] (\i*\u+\u,\j*\u) -- ++(0,\u);
				
				% --- diagonals ---
				\draw[thick, blue] (\i*\u,\j*\u) -- ++(\u,\u);
				\draw[thick, green!60!black] (\i*\u+\u,\j*\u) -- ++(-\u,\u);
			}
		}
		
		% --- central top triangle vertices ---
		\coordinate (A) at (1,1);
		\coordinate (B) at (2,1);
		\coordinate (C) at (1.5,1.5);
		
		% --- midpoints ---
		\coordinate (Mab) at ($(A)!0.5!(B)$);
		\coordinate (Mac) at ($(A)!0.5!(C)$);
		\coordinate (Mbc) at ($(B)!0.5!(C)$);
		
		\tikzset{
			rotarrow/.style={
				{Stealth[length=3pt, bend]}-, 
				>=Stealth, 
				thin
			}
		}
		
		\tikzset{
			refarrow/.style={
				<->,               
				{Stealth[length=3pt, bend]}-{Stealth[length=3pt, bend]}, 
				>=Stealth,          
				thin
			}
		}
		
		% --- rotation on horizontal edge AB ---
		\fill (Mab) circle (0.5pt);
		\draw[rotarrow] ($(Mab)+(250:0.12)$) arc (250:-70:0.12); 
		\node[below=0.3] at (Mab) {$a$};
		
		% --- reflections on the other two edges ---
		
		\draw[refarrow] ($(Mac)+(-0.11,0.11)$) -- ($(Mac)+(0.11,-0.11)$); 
		\node[above=0.3] at (Mac) {$b$};

		\draw[refarrow] ($(Mbc)+(-0.11,-0.11)$) -- ($(Mbc)+(0.11,0.11)$); 
		\node[above=0.3] at (Mbc) {$c$};
		
	\end{tikzpicture}
	\caption{The action of $W$ for $(m,n)=(2,2)$ in presentation \ref{type3}.}
	\label{fig: action type 3}
\end{figure}
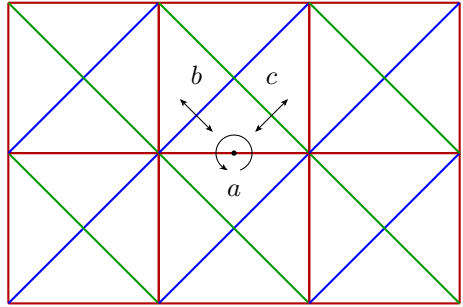

For $(m,n)\neq (1,\infty)$, we thus obtain by Poincaré's Polyhedron Theorem that the involution system $(W,S)$ presented by \ref{type3} is isomorphic to $W'=\langle a',b',c'\rangle$. Moreover, it has Cayley graph dual to the tiling of $\X$ by translates of $\Delta$ and thus isomorphic (as an unlabeled graph) to the Cayley graph of the triangle group $(2m,2m,n)$, the Coxeter companion of $(W,S)$. See, for instance, Figure \ref{fig:Hypo} for the Cayley graph $(W,S)$ when $(m,n)=(3,2)$.

\begin{center}
	\begin{figure}[h!]
		\includegraphics[width=10cm]{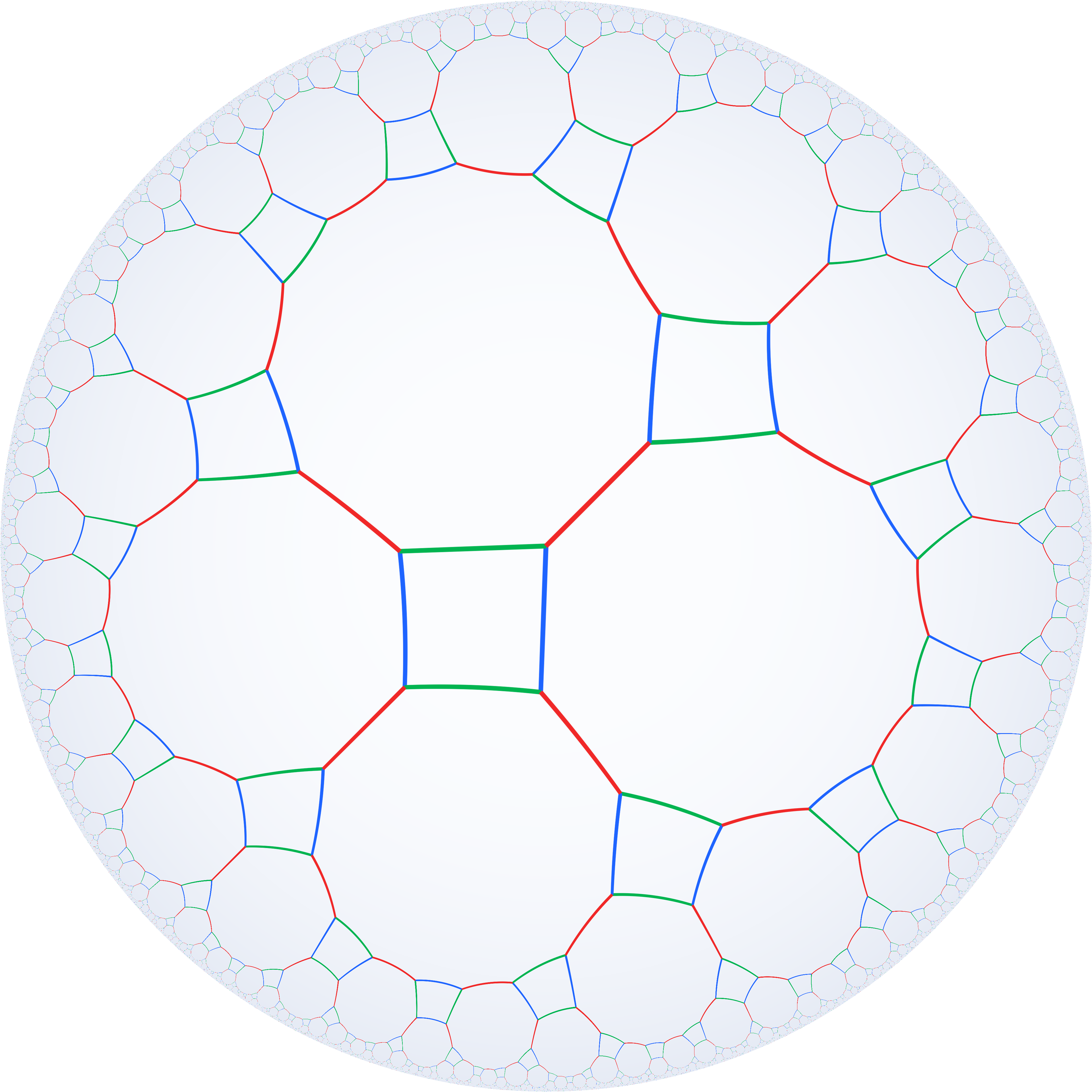}
		\caption{The Cayley graph $\Cay(W,\{a,b,c\})$ for the quasi-Coxeter system: \\$W=\mpair{a, b, c \mid a^2 = b^2 = c^2 = (abac)^3 = (bc)^2 = \id}$.}
		\label{fig:Hypo}
	\end{figure}
\end{center}

The case $(m,n) = (1,\infty)$ can be treated separately and its Cayley graph is depicted in Figure \ref{fig: Presentation 3, n=infty}. In particular, it is isomorphic (as an unlabeled graph) to the Cayley graph of the Coxeter system $\langle a,b,c \, | \, a^2=b^2=c^2 = (ab)^2=(bc)^2=\id\rangle$. Hence, in all cases, $(W,S)$ is a quasi-Coxeter system. This completes the proof of Theorem \ref{rank3-theorem}.
\end{proof}

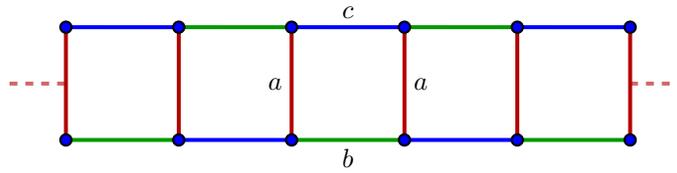
\begin{figure}
	\centering
	
	% --- FIRST PICTURE (LADDER) ---
	\begin{tikzpicture}[
		edge a/.style={red!70!black, line width=1.5pt},
		edge b/.style={blue, line width=1.5pt},
		edge c/.style={green!60!black, line width=1.5pt},
		vertex/.style={circle,draw=black,fill=blue, thick, inner sep=1.5pt},
		scale=1.5
		]
		
		% Parameters for the ladder - Updated to 5 squares
		\def\cols{5} 
		\def\dist{1} 
		
		% Draw the rungs (a-edges) and rails (b and c edges)
		\foreach \i in {0,...,\cols} {
			\coordinate (top\i) at (\i*\dist, 1);
			\coordinate (bot\i) at (\i*\dist, 0);
			
			% Draw the rungs (a)
			\draw[edge a] (bot\i) -- (top\i);
			
			% Draw the rails (b and c alternating)
			\ifnum\i<\cols
			\pgfmathtruncatemacro{\next}{\i+1}
			\ifodd\i
			\draw[edge c] (top\i) -- (\next*\dist, 1);
			\draw[edge b] (bot\i) -- (\next*\dist, 0);
			\else
			\draw[edge b] (top\i) -- (\next*\dist, 1);
			\draw[edge c] (bot\i) -- (\next*\dist, 0);
			\fi
			\fi
		}
		
		% Draw the vertices
		\foreach \i in {0,...,\cols} {
			\node[vertex] at (top\i) {};
			\node[vertex] at (bot\i) {};
		}
		
		% Labels for the CENTER square (now shifting to the new middle)
		\node[left] at (2, 0.5) {$a$}; 
		\node[right] at (3, 0.5) {$a$};
		\node[above] at (2.5, 1) {$c$};
		\node[below] at (2.5, 0) {$b$};
		
		% Centered Dashed Lines at the ends
		\draw[edge a, dashed, opacity=0.6] (-0.5, 0.5) -- (0, 0.5);
		\draw[edge a, dashed, opacity=0.6] (\cols*\dist, 0.5) -- ++(0.5, 0);
	\end{tikzpicture}

	\caption{The Cayley graph of $\langle a,b,c \, | \, a^2=b^2=c^2 = abac=\id\rangle$.}
	\label{fig: Presentation 3, n=infty}
\end{figure}

\begin{rem}
	The construction of quasi-Coxeter systems via actions on the space $\X$ by reflections and rotations by $\pi$ around midpoint of sides can be generalized to other polygons and polyhedra. In further work, we plan to present and study this general construction, which we name \emph{polyhedral involution systems}.
\end{rem}

%%%%%%%%%
%%%%%%%
\section{Further Works and Open Problems}
\label{se:FurtherOpen}

We end this article with some results and discussions concerning \EMIS. In Figure~\ref{fig:inclusions}, we survey the different inclusions between classes of involution systems.

\begin{figure}[h!]
\centering
\begin{tikzpicture}[
    node distance=1cm,
    every node/.style={font=\large}
]

\node (IS) {\IS};

\node (MIS) [below left=of IS] {\MIS};
\node (EIS) [below right=of IS] {\EIS};

\node (EMIS) [below=of MIS] {\EMIS};
\node (2-Rec) [below=of EIS] {\tworec};

\node (QCS) [left=of EMIS] {\QCS};
\node (CoxSyst) [left=of QCS] {\CS};

% Inclusion symbols only (no lines)

\node at ($(MIS)!0.5!(IS)$) {\rotatebox{45}{$\subsetneq$}};
\node at ($(EIS)!0.5!(IS)$) {\rotatebox{-45}{$\supsetneq$}};

\node at ($(EMIS)!0.5!(MIS)$) {\rotatebox{90}{$\subsetneq$}};
\node at ($(2-Rec)!0.5!(EIS)$) {\rotatebox{90}{$\subsetneq$}};
\node at ($(2-Rec)!0.5!(EMIS)$) {{$\subsetneq$}};

\node at ($(QCS)!0.5!(EMIS)$) {{$\subsetneq$}};
\node at ($(CoxSyst)!0.5!(QCS)$) {{$\subsetneq$}};

\end{tikzpicture}
\caption{Inclusion relations between classes of involution systems.}
\label{fig:inclusions}
\end{figure}
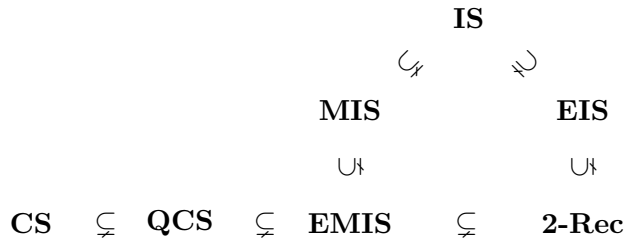

%%%%%
\subsection{A rank 4 \EMIS that is not a quasi-Coxeter system}\label{ss:Rank4}

Consider the involution system $(W,S)$ defined by:
$$
W = \langle a,b,c,d \mid a^2=b^2=c^2=d^2 =(abc)^2=(ad)^2=1\rangle.
$$
Since all relations are of even length, $(W,S)$ is an \EIS by Corollary~\ref{cor:EIS-relations} and the above presentation is a $2$-recognizable presentation of $(W,S)$. 
Let $I=\{a,b,c\}\subset S$ and consider the subgroup $W_I=\langle I\rangle$.  Observe that the involution system $(W_I,I)$ is the \EMIS of Example~\ref{ex:EMIS-A2}. Therefore the only irreducible cycle containing $\id$ needed to obtain the $2$-recognizable involution system above is the cycle of length $4$ with vertices $\id, a,d,ad$ and the companion graph of $(W,S)$ is:
\begin{center}
$\Gamma:$ \hskip 1cm
\begin{tikzpicture}
    [scale=1,
     sommet/.style={inner sep=2pt,circle,draw=black,fill=blue,thick,anchor=base},
     rotate=0,
     baseline = 0]
    \tikzstyle{every node}=[font=\small]
    % Anchor point
    \coordinate (ancre) at (0,-0.0);
    
    % Nodes
    \node  at ($(ancre)+(0,0.6)$) {};
    \node[sommet] (a1) at ($(ancre)+(1,0.5)$) {};
    \node[above=2pt,color=blue] at (a1) {$c$};

    \node[sommet] (a3) at ($(ancre)+(2,0)$) {} edge[thick] node[above,pos=0.35] {$\infty$} (a1);
    \node[below=2pt,color=blue] at (a3) {$d$};

    \node[sommet] (a2) at ($(ancre)+(1,-0.5)$) {} 
    edge[thick] node[auto,swap,right] {} (a1) 
    edge[thick] node[auto,swap,below,pos=0.45] {$\infty$} (a3);
   \node[below=2pt,color=blue] at (a2) {$b$};
    
    \node[sommet] (a4) at (ancre) {} 
    edge[thick] node[auto,swap,right] {} (a1) 
    edge[thick] node[auto,swap,above,pos=0.35] {} (a2);
    \node[below=2pt,color=blue] at (a4) {$a$};
\end{tikzpicture}.
\end{center}

\noindent However $(W,S)$ is not a quasi-Coxeter system. Indeed, $\Cay(W,S)$ is obtained as follows:
\begin{enumerate}
\item Start from $\Cay(W_I,I)$: to each edge labeled by $a$ attached a square for the relation $ad=da$. 
\item Now, at the edge of that square labelled by $a$ that is not in $\Cay(W_I,I)$, attach a copy of $\Cay(W_I,I)$ with adequate labelling of the vertices: we call it a {\em bridge}. 
\item Then do the same process for any copy of $\Cay(W_I,I)$.
\end{enumerate} 
We obtain an infinite graph with an infinite number of disjoint copies $\Cay(W_I,I)$; two copies of $\Cay(W_I,I)$ are linked by a at most one bridge, each corresponding to the relation $ad=da$. This graph is not the Cayley graph of a Coxeter system. Still, since every element of $(W,S)$ determines an unique sequence of bridges, and because $(W_I, I)$ is an \EMIS, it is straightforward to show that $(W,S)$ is also an \EMIS. 

Denote by $(\overline W,S)$ of type $\Gamma$, the Coxeter system corresponding to the Coxeter companion of $(W,S)$. One can show that surprisingly, in this case,  $W(q)=\overline W(q)$. In particular, $W(q)$ is rational. 

%%%%%%%%%%%%%%%%%%%%%
\subsection{Classification problems} In Figure~\ref{fig:inclusions}, we see the different types of involution systems introduced in this article. Of  course, for each of these types, the question of classifying them can be asked. Amongst the first questions that could be asked we propose the the following: find necessary and sufficient conditions for an involution systems to be an \MIS; classify the \EMIS that are quasi-Coxeter systems;  classify the \EMIS that have a given companion graph.

%%%%%%%%%%%%%%%%%%%%%
\subsection{On Growth series and the language of reduced words of \EMIS} The results of \S\ref{ss:Growth} lead us to ask the following questions. 

\begin{question}\label{qu:Growth} Let $(W,S)$ be an \EMIS. Is $W(q)$ rational? Is $W(q)=\overline W(q)$, where $(\overline W,\overline S)$ is the Coxeter-companion of $(W,S)$?
\end{question}

We do not believe that the answer to the second part of the question is positive, but to find a counterexample would be very interesting. The growth series of the \EMIS that is not a quasi-Coxeter system from~\S\ref{ss:Rank4}  provides a positive answer to Question~\ref{qu:Growth} in this case. 

Of course, all classical questions in combinatorial group theory can be asked for an \EMIS in general:  Is $\Red(W,S)$ regular? Is the word problem solvable? Is the conjugacy problem solvable? The answer to those questions is positive in the case of a quasi-Coxeter system, as was shown in \S\ref{ss:Growth} and \S\ref{ss:Automatic}. Moreover, for a general \EMIS $(W,S)$, if every relation in its 2-recognizable presentation has length at least 8, then small cancellation theory tells us that the word problem of $(W,S)$ is solvable \cite[\S5.9]{LyndonScupp}.

\subsection{On Garside shadows of \EMIS and automaticity} \label{ss: Garside}  
The notion of Garside shadows was introduced in~\cite{DyHo16} in the context of finding finite Garside families for Artin-Tits monoids; see also~\cite{DDH14}. This notion can also be defined for any \EMIS:    A {\em Garside shadow of an \EMIS $(W,S)$} is a subset $B$ of $W$ containing $S$ such that:
    \begin{enumerate}[(i)]
    \item if $X\subset B$ is bounded, then $\bigvee_R X\in B$ (join closed);
    \item if $w\in B$ and $v$ is a suffix of $w$, i.e. $v\leq_L w$ in left weak order, then $v\in B$ (suffix closed).
    \end{enumerate}

Since intersections of Garside shadows are Garside shadows, there is a unique {\em smallest Garside shadow} denote by $\tilde S$. Garside shadows plays also an important role in relation to automatic structures of Coxeter systems, see \cite{PrYa24,RegPartition}. They provide in particular {\em voracious languages}, see~\cite{Sa25}, which are the languages used to show that Coxeter systems are biautomatic~\cite{OP22} as discussed in \S\ref{ss:Automatic}. A key result, shown in~\cite{DyHo16}, is that there exist {\em finite} Garside shadows. The proof relies on the fact that the set of {\em elementary walls (or small roots)} is finite~\cite{BH-par}, which is key to exhibit a biautomatic structure for Coxeter systems. As we do not see at this point how to define the concept of walls, or roots, in the context of an \EMIS, the existence of finite Garside shadows might show itself relevant in the study of the problem of automaticity of \EMIS.  

\smallskip

Note that if $(W,S)$ is a quasi-Coxeter system with Coxeter companion $(\overline{W}, \overline{S})$, then the lattice isomorphism $\varphi : \overline{W} \to W$ preserves joins. Hence, if $B$ is a finite Garside shadow in $(\overline{W}, \overline{S})$, then its image $\varphi(B)$ in $(W,S)$ is join closed. However, $\varphi$ does not preserve suffixes, so we cannot immediately conclude that $\varphi(B)$ is a Garside shadow in $(W,S)$. Nonetheless, we have performed some computations in basic examples and it seems to be the case that $\varphi(B)$ is indeed suffix closed, though we were not able to find a proof so far.

%%%%%%%%%%
\subsection{Geometric representation of involution systems} We denote by $O(V,B)$ the group of isometries of a real quadratic space  $(V,B)$.  A {\em geometric representation} of $(W,S)$  is a faithful linear representation $W\to O(V,B)$ such that: (i) $S$ acts by reflections on $V$; (ii) $W$ is discrete in $O(V,B)$. We say then that $W$ is a {\em discrete reflection group over $(V,B)$}. It is well-known that Coxeter systems have geometric representations (even many in the case of indefinite Coxeter systems); see for instance \cite[\S2.3]{DyHo16} and the references therein.

\smallskip
Jon McCammond made us aware of two works in progress (one with Barbara Baumeister and George Neaime and another one with Saeid Azam) on the subject of discrete reflection groups that are not Coxeter groups~ \cite{McC25}. In particular, in their work, the involution system $(W,S)$ in Example~\ref{ex:EMIS-A2} can be realized as a discrete reflection group in a quadratic space $(V,B)$, with basis $(e_1,e_2,e_3)$ of dimension $3$ where
$$
B=
\begin{pmatrix}
1 & 1 & 1 \\
1 & 1 & 1 \\
1 & 1 & 1
\end{pmatrix}.
$$
Then, $S$ is the set of reflections $s_{e_i}(e_j)=e_j-2e_i$ and $W=\langle S\rangle$. Therefore  $W$ is a discrete reflection group with this geometric representation. This group also appear in conjunction with the notion of {\em Riemannian symmetric spaces} (by replacing $s_{e_i}$ by $-s_{e_i}$): it is then a discrete lattice in the continuous Lie Group of $\mathbb R^3$ generated by {\em point reflections} (rotations of angle $\pi$).
\smallskip

In the case of an \EMIS $(W,S)$ of rank $3$, we constructed in the proof of Theorem~\ref{rank3-theorem} a representation of $W$ as a discrete subgroup of  $\Isom(\X)$ generated by reflections and rotations of angle $\pi$; here $\Isom(\X)$ denotes the group of isometries on $\X = \Sb^2, \E^2$ or $\Hb^2$. It is well-known that there is a real quadratic space $(V,B)$ of dimension $3$ such that:
\begin{itemize}
\item $\Isom(\Sb^2)\cong O(3)=O(V,B)$, the orthogonal group over the Euclidean vector space $V=\mathbb R^3$ and $B$ is a scalar product;
\item $\Isom(\E^2)\cong O(2,0)=O(V,B)$, where $(V,B)$ is a positive degenerate quadratic space of dimension $3$  with signature $(2,0)$; 
\item $\Isom(\Hb^2)\cong O^+(2,1)\leq O(2,1)=O(V,B)$, where $(V,B)$ is the Lorentzian space. 
\end{itemize}

Now any reflection (resp. rotation)  on $\X$ correspond to a reflection (resp. rotation)  in $O(V,B)$. Moreover, if $r$ is a rotation of angle $\pi$, then $-r$ is a reflection in $O(V,B)$. Therefore $W$ is a discrete reflection group in $O(V,B)$. We obtain  the following corollary.

\begin{cor}[of the proof of Theorem~\ref{rank3-theorem}] Any \EMIS of rank at most $3$ admits a geometric representation.
\end{cor}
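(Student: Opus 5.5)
The statement is the last corollary: any \EMIS of rank at most $3$ admits a geometric representation. I would split by rank and use the classification of Theorem~\ref{rank3-theorem} for rank $3$.

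\emph{Ranks $1$ and $2$.} These are Coxeter systems: an involution system of rank $1$ is $\Z_2$, and one of rank $2$ is a dihedral group with its Coxeter presentation, as noted before Theorem~\ref{thm:presentationEMIS}. The classical geometric (Tits) representation of a Coxeter system is faithful, sends $S$ to reflections, and has discrete image. This settles these ranks.

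\emph{Rank $3$.} By Theorem~\ref{rank3-theorem}, $(W,S)$ admits one of the presentations \ref{type1}--\ref{type3} of Theorem~\ref{thm:rk3-reco}, or is a Coxeter system of type \ref{type4}; the latter is handled as above. For \ref{type1}--\ref{type3}, the proof of Theorem~\ref{rank3-theorem} gives, via Poincar\'e's Polyhedron Theorem, an isomorphism $W\cong W'=\langle a',b',c'\rangle\le \Isom(\X)$. Here each generator is either a reflection in a side of $\Delta$ or a rotation of angle $\pi$ about a side midpoint, and $W'$ acts on a locally finite tiling with fundamental domain $\Delta$, so $W'$ is discrete.

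Next I transport this to $O(V,B)$, using the identifications $\Isom(\Sb^2)\cong O(3)$, $\Isom(\Hb^2)\cong O^+(2,1)\le O(2,1)$, and, for $\E^2$, the affine model inside a degenerate form of signature $(2,0)$. Under these identifications a reflection of $\X$ becomes a reflection of $(V,B)$, i.e. it fixes a hyperplane pointwise and acts by $-1$ on a non-isotropic line. A $\pi$-rotation $r$ becomes an element with eigenvalues $(1,-1,-1)$, so $-r$ has eigenvalues $(-1,1,1)$ and is a reflection. I therefore define $\rho(s)=-\iota(s')$ whenever $s'$ is a rotation, and $\rho(s)=\iota(s')$ whenever $s'$ is a reflection.

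To see that $\rho$ is a homomorphism, note that $-\mathrm{Id}$ is central and each relation of \ref{type1}--\ref{type3} has even length (Remark~\ref{rem:Recognizable}). The sign changes therefore appear an even number of times in each relation, up to reordering, since they are central; this needs a count of how many rotation letters each relator contains. In \ref{type1} every letter is a rotation and the relator has length $6m$. In \ref{type2} the relator $(abacbc)^m$ contains $4m$ rotation letters. In \ref{type3} the relator $(abac)^m$ contains $2m$ rotation letters and $(bc)^n$ contains none. In every case the number of sign changes is even.

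For faithfulness and discreteness, observe that $\rho(W)\subseteq \pm\iota(W')$. Composing $\rho$ with the projection to $O(V,B)/\{\pm1\}$ recovers the faithful representation $\iota$, so any $w\in\ker\rho$ satisfies $\iota(w)=\pm\mathrm{Id}$, and hence $\iota(w)=\mathrm{Id}$ since $-\mathrm{Id}\notin\iota(\Isom(\X))$ in the hyperbolic and Euclidean models. Discreteness follows because $\pm\iota(W')$ is discrete, being a finite extension of a discrete group. In the spherical case $-\mathrm{Id}$ lies in $O(3)$, and this case needs separate care.

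\textbf{Main obstacle.} I expect the main difficulty to be faithfulness in the spherical case, where $-\mathrm{Id}\in O(3)$ and $W$ is finite, together with the degenerate Euclidean model, where "reflection" must be interpreted with respect to a degenerate form. For the spherical case I would first try the sign character: $\varepsilon(w)=(-1)^{\ell(w)}$ is well defined on the \EIS, and I would compare $\det\rho(w)$ with $\varepsilon(w)$ to show that $\rho(w)=-\mathrm{Id}$ with $\iota(w)=\mathrm{Id}$ forces $w=\id$. Failing that, I would check the finitely many spherical cases of \ref{type3} with $m=1$ directly.
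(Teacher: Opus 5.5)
Your route is the paper's: embed $\Isom(\X)$ in $O(V,B)$ and replace each $\pi$-rotation $r$ by $-r$. Your count of rotation letters (for well-definedness) and your argument via $-\mathrm{Id}\notin\iota(\Isom(\X))$ (for faithfulness over $\Hb^2$ and $\E^2$) are correct, and they supply checks the paper's one-line argument omits.

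The spherical case, which you leave open, is a genuine gap. Your suggested remedy cannot close it, because there $\rho$ is actually \emph{not} faithful for odd $n$.
\begin{itemize}
\item The spherical cases are type (iii) with $m=1$ and any $n\geq 2$; that is infinitely many cases, though they form one uniform family.
\item Since $abac=\id$ gives $c=aba$, we get $W=\langle a,b\mid a^2,b^2,(ab)^{2n}\rangle$, the dihedral group of order $4n$.
\item Put $C$ at the north pole, $A$ at longitude $0$ and $B$ at longitude $\pi/n$ on the equator. The mirrors of $\langle b',c'\rangle$ are the vertical planes through the longitudes $k\pi/n$.
\item $-a'$ is the reflection in the vertical plane orthogonal to the midpoint $M$ of $AB$, which is the plane through longitude $\frac{(n+1)\pi}{2n}$. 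For odd $n$ this is one of the mirrors of $\langle b',c'\rangle$, so $\rho(W)=\langle b',c'\rangle$ has order $2n<4n$.
\item Concretely, $\iota((ab)^n)=-\mathrm{Id}$, and $(ab)^n$ contains $n$ rotation letters. Hence $\rho((ab)^n)=(-1)^n(-\mathrm{Id})=\mathrm{Id}$, although $(ab)^n\neq\id$.
\end{itemize}
No comparison of $\det\rho$ with $\varepsilon$ can detect this. Every $\rho(s)$ has determinant $-1$, so $\det\circ\rho=\varepsilon$ identically, and $(ab)^n$ has even length. Note also that the case to exclude is $\rho(w)=\mathrm{Id}$ with $\iota(w)=-\mathrm{Id}$. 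The case you wrote, with $\iota(w)=\mathrm{Id}$, is already trivial by faithfulness of $\iota$.

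The missing idea is to use a different representation in this case. With $c=aba$, all three generators are reflections of the Coxeter system $(W,\{a,b\})$ of type $I_2(2n)$. Its standard reflection representation is faithful and finite, hence discrete, and $S$ acts by reflections.

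The same dihedral description is needed for type (iii) with $(m,n)=(1,\infty)$, where $W$ is infinite dihedral and $c=aba$. The proof of Theorem~\ref{rank3-theorem} treats this case separately and does not realize it by a triangle in $\X$. So your claim that the proof gives $W\cong W'\le\Isom(\X)$ for all of (i)--(iii) is not right.

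The paper's sketch makes the same substitution $r\mapsto -r$ without checking faithfulness, so it shares this defect in the spherical odd-$n$ case. The corollary survives only through the dihedral description.

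Finally, in the degenerate signature-$(2,0)$ Euclidean model, your own definition of reflection fails for $-\iota(r)$. The map $\iota(r)$ acts as $-1$ on the plane $V/\operatorname{rad}B$, so $-\iota(r)$ acts trivially there, and its $(-1)$-eigenline is the isotropic radical. You must therefore use the weaker notion of reflection, namely an element of $O(V,B)$ fixing a hyperplane pointwise, as the paper tacitly does.
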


The geometric representation provided above for Example~\ref{ex:EMIS-A2} is not the one we obtain in the proof of Theorem~\ref{rank3-theorem} since the signature of the considered quadratic space of dimension $3$ is $(1,0)$. 

\begin{question}\label{qu:6} For which involution system are there geometric representations as discrete reflection groups in a quadratic space? Can we classify them?
\end{question}

%%%%%
\subsection{On standard subgroups of \EMIS}  Let $(W,S)$ be an \EMIS. Let $I\subseteq S$, and $W_I=\mpair{I}$. We do not know if $(W_I,I)$ is necessarily an \EMIS.  

\begin{question} \label{qu: parabolic subgroup} Is $W_I$ a sub-semilattice of $(W,\leq_R)$? More precisely: 
	\begin{enumerate}
		\item If $X\subseteq W_I$ nonempty, is $\bigwedge_R X\in W_I$?
		\item If $X\subseteq W_I$ is bounded, is $\bigvee_R X\in W_I$?
	\end{enumerate}
\end{question}

If the answer is positive, we would have an analog for \EMIS of the concept of {\em standard subgroups} as in the case of Coxeter systems. If the answer is negative, $(W_I,I)$ might still be an \EMIS but with a lattice structure that is not inherited from the one of $(W,S)$.

Observe that, contrary to Coxeter systems, if $w=s_1\cdots s_k$ is a reduced word for $w \in W_I$ with $s_i\in S$, it is not true in general that $s_i\in I$. Consider the finite \EMIS given by $W=\langle a,b,c \mid a^2,b^2,c^2,abac,(bc)^2 \rangle$. Then $c=aba\in W_{a,b}$ but the reduced word $c\notin \{a,b\}$. 

\smallskip
Another natural question to ask in regards to the lattice structure of an EMIS is about the existence of an equivalent of a longest element for finite $\EMIS$: Let $(W,S)$ be an \EMIS.
	\begin{enumerate}
		\item Under which assumptions is $(W,\leq_R)$ bounded? (And therefore a complete lattice.)
		\item If $S$ is bounded in $(W,\leq_R)$, is it true that $W$ is finite? Is it true  that $\bigvee_R S$ is a greatest bound in $(W,\leq_R)$?
	\end{enumerate}    
Recall that the the alternate group $\mathcal A_5$ in Example~\ref{ss:Alt5}  is a finite \MIS, but is neither an \EMIS nor a bounded poset.

%%%%%%%%%%%%%%%
\subsection{Mediangle graphs and \EMIS}\label{ss:Mediangleness}
    Tatiana Smirnova-Nagnibeda and Megan Howarth have pointed out to us possible connections between mediangle graphs and Cayley graphs of \EMIS. Mediangle graphs were introduced in~\cite{genevois2022rotation} by Genevois, for which he described notions of hyperplanes and separation/inversion sets (\cite[Theorem~3.9]{genevois2022rotation}). Another interesting property of groups with a mediangle Cayley graph is a version of the Matsumoto theorem: all geodesics from $\id$ to $w$ are equivalent up to {\em flips} (\cite[Lemma~3.3]{genevois2022rotation}), which are braid moves in Coxeter systems.
    
    In~\cite[Proposition~3.2]{genevois2022rotation}, Genevois showed that any median graph is mediangle. So any \EMIS with median Cayley graph is mediangled; see~\S\ref{Median subsection} for the discussion on median graphs and \EMIS.
    Moreover, all examples of \EMIS presented in this article, and all the ones we have investigated so far, have mediangle Cayley graph. This leads to the following question, which is the subject of ongoing research with Tatiana Smirnova-Nagnibeda and Megan Howarth.

    \begin{question}\label{qu:Mediangleness}
        If $(W,S)$ is an \EMIS, is $\Cay(W,S)$ a mediangle graph?
    \end{question}
    
    In \cite[Definition~1.4]{genevois2022rotation}, the only properties that one has to show to answer Question \ref{qu:Mediangleness} in the affirmative are the {\em cycle condition} and the {\em intersection of even cycles} condition. 
    
    Let $(W,S)$ be an \EMIS. The {\em intersection of even cycles condition} can be restated as the condition that the set of convex cycles weakly intersects. With Tatiana Smirnova-Nagnibeda and Megan Howarth, we already have a proof of this fact.
    
    The {\em cycle condition} can be restated in terms of the weak order: 

    \begin{quote}
        \noindent {\bf The cycle condition for an \EMIS.} $(W,S)$ satisfies the cycle condition if for any $w\in W$ and $s\neq t\in S$ such that $ws,wt\leq_R w$ the cycle containing $w,ws,wt$ and $ws\wedge_R wt$ is convex.   
    \end{quote}

The isometry $w^{-1}$ on $\Cay(W,S)$ maps a cycle containing $w,ws,wt$ and $ws\wedge_R wt$ to a cycle containing $\id, s, t$ and $s\vee_R t$. So to show that $(W,S)$ satisfies the cycle condition one can show that Question~\ref{qu:Join} has a positive answer.

%%%%%%
\subsection{On CAT(0) cellular structures for \EMIS}

Every Coxeter group $\overline{W}$ acts properly and cocompactly by isometries on a contractible CAT(0) piecewise Euclidean cell complex $\Sigma$ called the \emph{Davis complex} of $\overline{W}$ \cite{Davisbook, Moussong}. This has many important consequences for the geometric and topological study of Coxeter groups.

This leads to the following question.

\begin{question} \label{qu: Davis complex}
	Is it possible to construct an analog of the Davis complex for an \EMIS? Are \EMIS \emph{CAT(0)}?
\end{question}

In \cite{chepoicellular}, the authors show that any bipartite mediangle graph can be endowed with the structure of a contractible cell complex. Hence, if it is the case that Cayley graphs of \EMIS are mediangle, one would obtain such a contractible complex for an \EMIS. However, it is unclear if one can endow these complexes with a CAT(0) metric.

\subsection{On convex subgraphs of Coxeter systems}

By \cite{HaglundWise}, any median graph can be realized as a convex subgraph of the Cayley graph of some right-angled Coxeter system. One natural generalization of this result that Genevois asks in \cite{genevois2022rotation} is: Which bipartite mediangle graphs can be realised as convex subgraphs of the Cayley graph of some Coxeter system?

Even though we still don't know if the Cayley graph of every \EMIS is mediangle, all examples of \EMIS in this paper satisfy this property. That is, for any \EMIS $(W,S)$ appearing in this work we can find some Coxeter system $(\overline{W}, \overline{S})$ such that $\Cay(W,S)$ is a convex subgraph of $\Cay(\overline{W}, \overline{S})$. This lead us to ask the following question.

 \begin{question} \label{qu: convex subgraph}
 	Can the Cayley graph of any \EMIS be realized as a convex subgraph of the Cayley graph of some Coxeter system?
 \end{question}
 
 If the answer to Question~\ref{qu: convex subgraph} is positive, this would have notable consequences. For instance, it would yield a positive answer to Question \ref{qu: parabolic subgroup} and give the notion of parabolic subgroups of \EMIS. Another consequence would be the finiteness of cone types of an \EMIS, thus giving a positive answer to the first part of Question~\ref{qu:Growth} and the question of the regularity of the language $\Red(W,S)$.

% ---------- Column types ----------
\newcolumntype{C}{>{\centering\arraybackslash}m{3cm}} % Diagram columns
\newcolumntype{P}{>{\raggedright\arraybackslash}m{7.5cm}} % Presentation column

\setlength{\tabcolsep}{10pt} % Horizontal padding
\renewcommand{\arraystretch}{2} % Row height

\clearpage

\begin{table}[ht]
	\centering
	\small
	\begin{adjustbox}{width=160mm, center}
		\begin{tabular}{l C C P}
			\toprule
			& Companion graph & Coxeter type & Presentation \\
			\midrule
			
			\multirow[c]{10}{*}{Finite}
			& \nodesA & \nodesA & $\langle a \mid a^2 \rangle$ \\
			& \nodesAB & \nodesAB & $\langle a,b \mid a^2,b^2,(ab)^2 \rangle$ \\
			& \edgeAB{m} & \edgeAB{m} & $\langle a,b \mid a^2,b^2,(ab)^m \rangle,\  m\ge1$ \\
			& \multirow{2}{*}{\nodesABC} & \nodesABC & $\langle a,b,c \mid a^2,b^2,c^2,(ab)^2,(ac)^2,(bc)^2 \rangle$ \\
			& & \triangleABCC{4}{4} & $\boxed{\langle a,b,c \mid a^2,b^2,c^2,abac,(bc)^2 \rangle}$ \\
			& \multirow{2}{*}{\splitABC{n}} & \splitABC{n} & $\langle a,b,c \mid a^2,b^2,c^2,(ab)^2,(ac)^2,(bc)^n \rangle, \ n\ge3$ \\
			& & \triangleABC{2n}{2n}{n} & $\boxed{\langle a,b,c \mid a^2,b^2,c^2,abac,(bc)^n \rangle,\ n\ge3}$ \\
			& \chainABC{}{} & \chainABC{}{} & $\langle a,b,c \mid a^2,b^2,c^2,(ab)^3,(bc)^3,(ac)^2 \rangle$ \\
			& \chainABC{4}{} & \chainABC{4}{} & $\langle a,b,c \mid a^2,b^2,c^2,(ab)^4,(bc)^3,(ac)^2 \rangle$ \\
			& \chainABC{5}{} & \chainABC{5}{} & $\langle a,b,c \mid a^2,b^2,c^2,(ab)^5,(bc)^3,(ac)^2 \rangle$ \\
			\midrule
			
			\multirow[c]{10}{*}{Euclidean}						
			& \edgeAB{\infty} & \edgeAB{\infty} & $\langle a,b \mid a^2,b^2 \rangle$ \\
			\addlinespace[8pt]
			& \multirow{2}{*}{\splitABC{\infty}} & \splitABC{\infty} & $\langle a,b,c \mid a^2,b^2,c^2,(ab)^2,(ac)^2 \rangle$ \\
			& & \triangleABC{\infty}{\infty}{\infty} & $\boxed{\langle a,b,c \mid a^2,b^2,c^2,abac \rangle}$ \\
			\addlinespace[8pt]
			& \multirow{3}{*}{\triangleABC{}{}{} } & \triangleABC{}{}{} & $\langle a,b,c \mid a^2,b^2,c^2,(ab)^3,(ac)^3,(bc)^3 \rangle$ \\
			& &  \multirow{2}{*}{\triangleABC{\infty}{\infty}{\infty}} & $\boxed{\langle a,b,c \mid a^2,b^2,c^2,(abc)^2 \rangle}$ \\
			& & & $\boxed{\langle a,b,c \mid a^2,b^2,c^2,abacbc \rangle}$ \\
			\addlinespace[8pt]
			& \multirow{3}{*}{\triangleABCC{4}{4} } & \triangleABCC{4}{4} & $\langle a,b,c \mid a^2,b^2,c^2,(ab)^4,(ac)^4,(bc)^2 \rangle$ \\
			& &  \triangleABCC{\infty}{\infty} & $\boxed{\langle a,b,c \mid a^2,b^2,c^2,(abac)^2,(bc)^2 \rangle}$ \\
			\addlinespace[8pt]
			& \chainABC{6}{} & \chainABC{6}{} & $\langle a,b,c \mid a^2,b^2,c^2,(ab)^6,(bc)^3,(ac)^2 \rangle$ \\

			\bottomrule
		\end{tabular}
	\end{adjustbox}
	\caption{Classification of finite/spherical and Euclidean \EMIS of rank at most 3. The \EMIS\ {\em that are not} Coxeter systems are boxed.}
	\label{table: Classification in rank 3}
\end{table}

\begin{table}[ht]
	\centering
	\small
	\begin{adjustbox}{width=180mm, center}
		\begin{tabular}{l C C P}
			\toprule
			Family & Companion graph & Coxeter type & Presentation \\
			\midrule

			\multirow[c]{13}{*}{Hyperbolic}
			& \multirow{3}{*}{\triangleABC{3m}{3m}{3m}} & \triangleABC{3m}{3m}{3m} & $\langle a,b,c \mid a^2,b^2,c^2,(ab)^{3m},(bc)^{3m},(ac)^{3m} \rangle$, $m\ge1$ \\
			& & \multirow{2}{*}{\triangleABC{\infty}{\infty}{\infty}} & $\boxed{\langle a,b,c \mid a^2,b^2,c^2,(abc)^{2m} \rangle,\ m\ge2}$ \\
			& & & $\boxed{\langle a,b,c \mid a^2,b^2,c^2,(abacbc)^m \rangle,\ m\ge2}$ \\
			\addlinespace[8pt]
			
			& \multirow{3}{*}{\chainABC{2m}{2m}}&\chainABC{2m}{2m} & $\langle a,b,c \mid a^2,b^2,c^2,(ab)^{2m},(bc)^{2m}, (ac)^2 \rangle$, $m\ge3$ \\
			
			&& \triangleABCC{\infty}{\infty} & $\boxed{\langle a,b,c \mid a^2,b^2,c^2,(abac)^m,(bc)^2 \rangle,\ m\ge3}$ \\
		\addlinespace[8pt]
			& \multirow{3}{*}{\triangleABC{2m}{2m}{n}}  & \triangleABC{2m}{2m}{n} & $\langle a,b,c \mid a^2,b^2,c^2,(ab)^{2m},(ac)^{2m},(bc)^n \rangle$, $m\ge2$, $n\ge3$ or $\infty$ \\
			& & \triangleABC{\infty}{\infty}{n} & $\boxed{\langle a,b,c \mid a^2,b^2,c^2,(abac)^m,(bc)^n \rangle,\ m\ge2,\ n\ge3\ \text{or} \ \infty}$ \\
			\addlinespace[8pt]
						
			&\chainABC{m}{} & \chainABC{m}{} &$\langle a,b,c \mid a^2,b^2,c^2,(ab)^m,(bc)^3,(ac)^2 \rangle,\ m\ge7\ \text{or} \ \infty$\\
			
			&\chainABC{m}{n} & \chainABC{m}{n} &$\langle a,b,c \mid a^2,b^2,c^2,(ab)^m,(bc)^n,(ac)^2 \rangle,\ m\ge5,\ n\ge4\ \text{or} \ \infty$\\
			
			&\triangleABC{m}{n}{p} & \triangleABC{m}{n}{p} &$\langle a,b,c \mid a^2,b^2,c^2,(ab)^m,(bc)^n,(ac)^p \rangle,\ m\ge4,\ n,p\ge3\ \text{or} \ \infty$\\
			
			\bottomrule
		\end{tabular}
	\end{adjustbox}
	\caption{Classification of hyperbolic \EMIS of rank at most 3. The \EMIS\ {\em that are not} Coxeter systems are boxed.}
	\label{table2: Classification in rank 3}
\end{table}

\clearpage

\renewcommand{\arraystretch}{1} % reset to default

\bibliography{refs.bib}{} 
\bibliographystyle{plain} 

\end{document}